\newcommand{\term}{\emph}
\newcommand{\field}[1]{\mathbb{#1}}
\newcommand{\N}{\mathbb{N}}
\newcommand{\R}{\field{R}}
\newcommand{\extR}{\overline \R}
\newcommand{\B}{B}
\newcommand{\norm}[1]{\|#1\|}
\newcommand{\abs}[1]{|#1|}
\newcommand{\inv}[1]{#1^{-1}}
\newcommand{\grad}{\nabla}
\newcommand{\freevar}{\,\boldsymbol\cdot\,}
\newcommand{\Union}\bigcup
\newcommand{\Isect}\bigcap
\newcommand{\union}\cup
\newcommand{\isect}\cap
\newcommand{\bigunion}\bigcup
\newcommand{\bigisect}\bigcap
\newcommand{\defeq}{:=}
\newcommand{\downto}{\searrow}
\newcommand{\upto}{\nearrow}
\newcommand{\subdiff}{\partial}
\DeclareMathOperator*{\argmin}{arg\,min}
\DeclareMathOperator{\ri}{ri}
\DeclareMathOperator{\sign}{sign}
\DeclareMathOperator{\Sign}{Sign}
\DeclareMathOperator{\divergence}{div}
\DeclareMathOperator{\range}{ran}
\def \uminusSym{\setbox0=\hbox{$\cup$}\rlap{\hbox 
        to\wd0{\hss\raise0.5ex\hbox{$\scriptscriptstyle{-}$}\hss}}\box0}
\newcommand{\iprod}[2]{\langle #1,#2\rangle}
\def\llangle{\langle\kern-3pt\langle}
\def\rrangle{\rangle\kern-3pt\rangle}
\newcommand{\weakto}{\mathrel{\rightharpoonup}}
\def \weaktostarSym{\setbox0=\hbox{$\rightharpoonup$}\rlap{\hbox 
        to\wd0{\hss\raise1ex\hbox{$\scriptscriptstyle{*\,}$}\hss}}\box0}
\def\linear{\mathbb{L}}
\newcommand{\setto}{\rightrightarrows}
\def\extR{\overline \R}
\def\opt#1{\bar #1}
\def\optx{{\opt{x}}}
\def\opty{{\opt{y}}}
\def\optd{{\opt{d}}}
\def\gap{\mathcal{G}}
\def\GenGap{\mathcal{G}}
\DeclareMathOperator{\dist}{dist}
\DeclareMathOperator{\prox}{prox}
\def\d{\,d}
\def\dualprod#1#2{\langle #1|#2\rangle}
\newcommand{\Meas}{\mathcal{M}}
\newcommand{\Lebesgue}{\mathcal{L}}
\let\phi=\varphi
\let\epsilon=\varepsilon
\def\alt#1{\tilde #1}
\def\Id{\mathop{\mathrm{Id}}}
\def\BVspace{\mathop{\mathrm{BV}}}
\renewrobustcmd{\downto}{{{\mathchoice%
            {\rotatebox[origin=c]{-20}{$\to$}}
            {\rotatebox[origin=c]{-20}{$\to$}}
            {\rotatebox[origin=c]{-20}{\scalebox{0.75}{$\to$}}}
            {\rotatebox[origin=c]{-20}{\scalebox{0.6}{$\to$}}}
}}}
\renewrobustcmd{\upto}{{{\mathchoice%
            {\rotatebox[origin=c]{20}{$\to$}}
            {\rotatebox[origin=c]{20}{$\to$}}
            {\rotatebox[origin=c]{20}{\scalebox{0.75}{$\to$}}}
            {\rotatebox[origin=c]{20}{\scalebox{0.6}{$\to$}}}
}}}
\theoremstyle{definition}
\newtheorem{assumption}[theorem]{Assumption}
\crefname{assumption}{Assumption}{Assumptions}
\date{2020-10-30 (revised 2021-01-17)}
\author{
    Tuomo Valkonen\thanks{Department of Mathematics and Statistics, University of Helsinki, Finland \emph{and} ModeMat, Escuela Politécnica Nacional, Quito, Ecuador, \email{tuomo.valkonen@iki.fi}, \orcid{0000-0001-6683-3572}}
    }
\title{Regularisation, optimisation, subregularity}
\begin{document}

\maketitle

\begin{abstract}
    Regularisation theory in Banach spaces, and non--norm-squared regularisation even in finite dimensions, generally relies upon Bregman divergences to replace norm convergence. This is comparable to the extension of first-order optimisation methods to Banach spaces.
    Bregman divergences can, however, be somewhat suboptimal in terms of descriptiveness.
    Using the concept of \term{(strong) metric subregularity}, previously used to prove the fast local convergence of optimisation methods, we show norm convergence in Banach spaces and for non--norm-squared regularisation. For problems such as total variation regularised image reconstruction, the metric subregularity reduces to a geometric condition on the ground truth: flat areas in the ground truth have to compensate for the fidelity term not having second-order growth within the kernel of the forward operator.
    Our approach to proving such regularisation results is based on optimisation formulations of inverse problems. As a side result of the regularisation theory that we develop, we provide \term{regularisation complexity} results for optimisation methods: how many steps $N_\delta$ of the algorithm do we have to take for the approximate solutions to converge as the corruption level $\delta \downto 0$?
\end{abstract}

\section{Introduction}

Let $A \in C^1(X; Y)$ between a Banach space $X$ of unknowns and a Hilbert space $Y$ of measurements.
A common  approach to solving the inverse problem $A(x)=\hat b$, having access only to corrupted measurements $b^\delta$ of the true data $\hat b$, is to solve for some regularisation parameter $\alpha_\delta>0$ and a convex regularisation functional $R$ the Tikhonov-style regularised problem
\begin{equation}
    \label{eq:intro:problem}
    \min_{x \in X} J_\delta(x) + \alpha_\delta R(x)
    \quad\text{where}\quad
    J_\delta(x) \defeq \frac{1}{2}\norm{A(x)-b_\delta}_Y^2.
\end{equation}
We then want to know whether solutions $x^\delta$ to this problem converge to a solution (or ground-truth) of the original equation as $\delta \downto 0$. One typically fixes a specific solution
\begin{equation}
    \label{eq:intro:ground-truth}
    \hat x \in \argmin_x \{ R(x) \mid A(x)=\hat b \}.
\end{equation}
Conventional regularisation theory \cite{engl2000regularization} treats the case $R(x)=\frac{1}{2}\norm{x}_X^2$ with $X$ also a Hilbert space. In this case $\hat x$ is the minimum-norm solution. Norm convergence $x^\delta \to \hat x$ can be shown provided $\hat x \in \range A'(\hat x)^*$ and $\alpha_\delta \downto 0$ as well as $\delta^2/\alpha_\delta \downto 0$ as $\delta \downto 0$.

How about Banach spaces $X$, and more complicated regularisation functionals $R$, such as non-differentiable sparsity-inducing $L^1$-norm regularisation, total variation regularisation, and its generalisations \cite{bredies2009tgv}?
Let $C \defeq \{x \in X \mid A(x)=\hat b\}$.
The first-order optimality conditions\footnote{Necessary and sufficient if $R$ is convex, proper, and lower semicontinuous, and $A$ is linear. More generally necessary; see \cref{sec:nonlinear}.} or Fermat principle for \eqref{eq:intro:ground-truth} requires that $0 \in \subdiff[\delta_C + \alpha_\delta \subdiff R](\hat x)$, where $\subdiff$ denotes the convex subdifferential, and $\delta_C$ the $\{0,\infty\}$-valued indicator function of $C$. This condition can for some $w \in Y^*$ be expanded as
\[
    0 \in A'(\hat x)^*w + \subdiff R(\hat x),
\]
which is known in the inverse problems literature as a \term{source condition}.
It  encodes the existence of an $R$-minimising ground-truth.
If the source condition holds, then it is shown in \cite{burger2004convergence} that the \term{Bregman divergence}
\begin{equation}
    \label{eq:intro:bregman}
    B_R^{\opt d}(x, \optx) \defeq \dualprod{\opt d}{\optx-x} + R(x) - R(\optx)
    \quad (x, \optx \in X;\, \opt d \in \subdiff R(\opt x))
\end{equation}
satisfies $B_R^{-A'(\hat x)^*w}(x^\delta, \hat x) \to 0$ provided the noise and regularisation parameters $\delta>0$ and $\alpha_\delta>0$ convergence as in the conventional norm-squared case. We refer to \cite{schuster2012regularization} for  the use of Bregman divergences in Banach space regularisation theory.

Convergence of Bregman divergences is, however, a relatively weak result. It does not imply norm convergence unless the Bregman divergence is \term{elliptic} in the words of \cite{tuomov-firstorder}, i.e., $B_R^{\opt d}(x, \optx) \ge \gamma\norm{x-\optx}^2$ for some $\gamma>0$. This simply means that $R$ is strongly convex, something that is not satisfied by $L^1$ and total variation regularisation.
Often, however, not the regulariser itself but the entire objective of \eqref{eq:intro:problem} admits a type of \emph{local} strong convexity.

\begin{example}
    On $\R^2$, take $A(x_1, x_2)=x_1$ and $R(x_1, x_2)=\abs{x_2}$.
    Then, due to the growth properties of the absolute value function at zero, $J_\delta+\alpha_\delta R$ is locally strongly convex at $(x_1, 0)$, but not at $(x_1, x_2)$ for $x_2 \ne 0$.
\end{example}

\noindent
If $A$ is linear, then $J_\delta+\alpha_\delta R$ is convex. In this case local strong convexity is equivalent to the \term{strong metric subregularity} of the subdifferential $\subdiff[J_\delta(x) + \alpha_\delta R]$ \cite{artacho2013metric,aragon2008characterization}.
Strong metric subregularity, and (non-strong) metric subregularity introduced in \cite{ioffe1979regular,dontchev2004regularity}, are Lipschitz-like properties of set-valued maps.
We briefly recall their definitions and interpretations in \cref{sec:subregularity}, referring to \cite{rockafellar-wets-va,ioffe2017variational,clasonvalkonen2020nonsmooth} for more comprehensive introductions. To see how such concepts can be used in optimisation, we refer to \cite{tuomov-subreg}.

We will exploit strong metric subregularity and an intermediate concept between strong and non-strong metric subregularity to show for general regularisers $R$ the norm convergence of \emph{approximate} solutions to \eqref{eq:intro:problem}. We do this first in \cref{sec:linear} for linear inverse problems, and then in \cref{sec:nonlinear} for nonlinear inverse problems. In \cref{sec:nonlinear} we also generalise the results to general data discrepancies $E$ in place of the squared norm.
In \cref{sec:examples} we verify the relevant subregularity---expressed as a \term{strong source condition}---for $\ell^1$ regularisation in finite dimensions, and for total variation regularisation in $L^2(\Omega)$. In both cases, the lack of ellipticity of $A^*A$  (i.e., $A^*A \not\ge \gamma \Id$ for some $\gamma>0$) has to be compensated for by the regularisation term.
In the first case, we obtain this unconditionally, whereas for total variation regularisation our results are more preliminary and require the ground-truth to have “strictly flat areas” that perform this compensation.

We finish in \cref{sec:complexity} with interpretations of the regularisation results of \cref{sec:linear} as “regularisation complexity” results for optimisation methods: forward-backward splitting and primal-dual proximal splitting. We derive expressions for how many iterations $N_\delta$ of the algorithm are needed to produce  approximate solutions $x^\delta$ to \eqref{eq:intro:problem} that converge to $\hat x$ as $\delta \downto 0$.

Our proof approach is intrinsically based around treating inverse problems as optimisation problems. In this sense, our work is closely related to \cite{hofmann2007convergence}, which also proves convergence in a “hybrid” topology involving the Bregman divergence and the norm in the data space: $B_R^{-A'(\hat x)^*w}(x^\delta, \hat x) + \norm{A(x^\delta)-A(\hat x)} \to 0$.
More general optimisation-based formulations of inverse problems are treated in \cite{kaltenbacher2018minimization}, whereas the weak-$*$ convergence of solutions to total generalised variation regularised linear inverse problems is treated in \cite{bredies2014regularization}.
Norm convergence of solutions to multibang control problems is studied in \cite{do2019discrete}. There the metric subregularity $R$ is employed, however, not that of the entire Tikhonov-regularised objective as we will do.

Iterative regularisation methods \cite{kaltenbacher2008iterative} also closely tie optimisation methods to regularisation, however, this is different from the “regularisation complexity” results of \cref{sec:complexity}: iterative regularisation adapts the parameter $\alpha_\delta>0$ within each step of the optimisation method, whereas we simply want to know how many steps of the method we need to take for fixed $\alpha_\delta$. Numerically efficient iterative regularisation is largely limited to smooth regularisers $R$ as nonsmooth approaches require solving a difficult “inner” optimisation problem in each step of the “outer” method \cite{bachmayr2009iterative}.

Finally, metric regularity---a concept stronger than metric subregularity and distinct from strong metric subregularity---has been used in \cite{gaydu2011stability} to study the regularisation of set-valued inclusions $0 \in T(x)$ via the Tikhonov-style expression $0 \in [T+\alpha_\delta \Id](x^\delta)$. The incorporation of the identity map $\Id$ requires $T$ to be a set-valued map form $X$ to $X$, whereas subdifferentials are set-valued maps from $X$ to $X^*$.

\subsection*{Notation and elementary results}

We denote the extended reals by $\extR \defeq [-\infty,\infty]$.
We write $H: X \setto Y$ when $H$ is a set-valued map from the space $X$ to $Y$.
For Fréchet differentiable $F:X \to R$, we write $F'(x) \in X^*$ for the Fréchet derivative at $x \in X$. Here $X^*$ is the dual space to $X$.
For a convex function $F: X \to \extR$, we write $\subdiff F: X \setto X^*$ for its subdifferential map.
On a normed space $X$, for a point $x \in X$ and a set $U \subset X$, we write $\dist(x, U) \defeq \inf_{x' \in U} \norm{x-x'}_X$, where $\norm{\freevar}_X$ is the norm on $X$. We also write $\dist^2(x, U) \defeq \dist(x, U)^2$.
We write $\iprod{x}{x'}$ for the inner product between two elements $x$ and $x'$ of a Hilbert space $X$, and $\dualprod{x^*}{x} \defeq x^*(x)$ for the dual product or dual pairing in a Banach space.
We write $\Id: X \to X$ for the identity operator on $X$ and $\delta_A: X \to \extR$ for the $\{0,\infty\}$-valued indicator function of a set $A \subset X$.

For $X$ a Hilbert space, we will frequently use Pythagoras' three-point identity
\begin{equation}
    \label{eq:intro:three-point}
    \iprod{x-y}{x-z}_X = \frac{1}{2}\norm{x-y}_X^2 - \frac{1}{2}\norm{y-z}_X^2 + \frac{1}{2}\norm{x-z}_X^2
    \quad
    (x,y,z\in X)
\end{equation}
and (inner product) Young's inequality
\begin{equation}
    \label{eq:intro:young}
    \iprod{x}{y}
    \le
    \norm{x}_X\norm{y}_X \le
    \frac{1}{2\alpha}\norm{x}_X^2 + \frac{\alpha}{2}\norm{y}_X^2
    \quad(x,y \in X,\, \alpha>0).
\end{equation}


\section{Metric subregularity and local subdifferentiability}
\label{sec:subregularity}

We recall that a set-valued function $H: X \setto Y$ is \term{metrically subregular} at $\optx$ for $\opty$ if $\opty \in H(\optx)$ and there exists a constant $\kappa>0$ as well as neighbourhoods $U \ni \optx$ and $V \ni \opty$ such that
\[
    \dist(x, \inv{H}(\opty)) \le \kappa \dist(\opty, H(x) \isect V)
    \quad (x \in U).
\]
If the stronger inequality
\begin{equation*}
    \norm{x-\optx}_X \le \kappa \dist(\opty, H(x) \isect V)
    \quad (x \in U)
\end{equation*}
holds, then we say that $H$ is \term{strongly metrically subregular} at $\optx$ for $\opty$.
The latter property can equivalently be stated as $H$ being metrically subregular with $\optx$ an isolated point of $\inv{H}(\opty)$.

We recall from \cite{aragon2008characterization,artacho2013metric} the following characterisations of the metric subregularity and strong metric subregularity of convex subdifferentials.

\begin{theorem}[{\cite[Theorem 3.3]{aragon2008characterization}}]
    \label{thm:subreg:convex}
    Let $F: X \to \extR$ be a convex function on a Banach space $X$, $\optx \in X$, and $\optx^* \in \subdiff F(\optx)$.
    Then $\subdiff F$ is metrically subregular at $\optx$ for $\optx^*$ if and only if there exists a neighbourhood $U$ of $\optx$ and a constant $\gamma>0$ such that
    \begin{equation}
        \label{eq:subreg:convex}
        F(x) \ge F(\optx) + \dualprod{\optx^*}{x-\optx} + \gamma \dist^2(x, \inv{[\subdiff F]}(\optx^*))
        \quad (x \in U).
    \end{equation}
    More precisely, \eqref{eq:subreg:convex} implies metric subregularity with $\kappa=1/\gamma$ while metric subregularity implies \eqref{eq:subreg:convex} for any $0 < \gamma < 1/(4\kappa)$.
\end{theorem}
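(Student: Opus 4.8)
The plan is to reduce to a normalised configuration and then treat the two implications separately: the direction ``\eqref{eq:subreg:convex}~$\Rightarrow$ metric subregularity'' is a direct consequence of the subgradient inequality, while the converse is proved by contradiction using Ekeland's variational principle. For the reduction, replace $F$ by $z \mapsto F(\optx + z) - F(\optx) - \dualprod{\optx^*}{z}$ (and, if necessary, by its lower semicontinuous hull, which changes neither $\subdiff F$ on $\Dom\subdiff F$ nor $\argmin F$); then $\optx = 0$, $\optx^* = 0$, $F(0) = 0$, and $0 \in \subdiff F(0)$ forces $F \ge 0 = \min F$. Consequently $S \defeq \inv{[\subdiff F]}(0) = \argmin F$ is a closed convex set containing $0$, and $\dist(x, S) = 0$ precisely when $F(x) = 0$. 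A translation shows that both \eqref{eq:subreg:convex} and (strong) metric subregularity, with their constants, are preserved by this reduction, so I henceforth work with $F$ as above.

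For ``\eqref{eq:subreg:convex}~$\Rightarrow$ metric subregularity'', let $x$ lie in the neighbourhood $U$ of \eqref{eq:subreg:convex} and let $x^* \in \subdiff F(x)$ (if $\subdiff F(x) = \emptyset$, the estimate is trivial). Testing the subgradient inequality against an arbitrary $\pi \in S$ gives $0 = F(\pi) \ge F(x) + \dualprod{x^*}{\pi - x}$, so $F(x) \le \norm{x^*}\norm{x - \pi}$; taking the infimum over $\pi \in S$ and inserting $F(x) \ge \gamma\dist^2(x, S)$ yields $\gamma \dist^2(x, S) \le \norm{x^*}\dist(x, S)$, i.e.\ $\dist(x, S) \le \norm{x^*}/\gamma$. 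Passing to the infimum over $x^* \in \subdiff F(x)$, and taking $V \defeq X^*$, this is exactly metric subregularity of $\subdiff F$ at $0$ for $0$ with modulus $\kappa = 1/\gamma$.

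For the converse, let $U \ni 0$, $V \ni 0$ and $\kappa > 0$ realise the metric subregularity, fix $\gamma < 1/(4\kappa)$, and suppose for contradiction that \eqref{eq:subreg:convex} fails on every neighbourhood of $0$. Then there exist points $x$ arbitrarily close to $0$ with $\rho \defeq \dist(x, S) > 0$ and $F(x) < \gamma\rho^2$. Since $\min F = 0$ we have $F(x) \le \inf_X F + \gamma\rho^2$, so Ekeland's variational principle, applied with $\epsilon \defeq \gamma\rho^2$ and radius $\lambda \defeq \rho/2$, produces $y \in X$ with $\norm{y - x} \le \rho/2$ that minimises $F + (\epsilon/\lambda)\norm{\freevar - y}$; by the Moreau--Rockafellar sum rule there is $y^* \in \subdiff F(y)$ with $\norm{y^*} \le \epsilon/\lambda = 2\gamma\rho$. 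Using $\rho = \dist(x, S) \le \norm{x}$ (because $0 \in S$), one checks that if $x$ is taken in a sufficiently small ball about $0$ then $y \in U$ and $y^* \in V$; hence $y^* \in \subdiff F(y) \isect V$, and metric subregularity gives $\dist(y, S) \le \kappa\dist(0, \subdiff F(y) \isect V) \le \kappa\norm{y^*} \le 2\kappa\gamma\rho$. But $\dist(y, S) \ge \dist(x, S) - \norm{x - y} \ge \rho - \rho/2 = \rho/2$, so $\rho/2 \le 2\kappa\gamma\rho$, i.e.\ $\gamma \ge 1/(4\kappa)$, a contradiction. Thus \eqref{eq:subreg:convex} holds on that ball with the chosen $\gamma$. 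The choice $\lambda = \rho/2$ maximises $\lambda(\rho - \lambda)$, and this is the origin of the factor $4$ in the bound $\gamma < 1/(4\kappa)$.

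I expect the only genuine difficulty to be the bookkeeping in the last paragraph: making ``sufficiently small ball'' quantitative, so that the Ekeland point $y$ and the extracted subgradient $y^*$ remain inside the a priori fixed neighbourhoods $U$ and $V$, and ensuring at the outset that $F$ may be taken proper and lower semicontinuous so that both Ekeland's principle and the sum rule apply. Everything else is elementary, and I would present the easy direction first, recording explicitly the relation between the constants — $\kappa = 1/\gamma$ in one direction and any $\gamma < 1/(4\kappa)$ in the other.
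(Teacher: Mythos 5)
The paper does not actually prove this statement: \cref{thm:subreg:convex} is imported verbatim from \cite[Theorem 3.3]{aragon2008characterization}, so there is no in-paper proof to compare against. Your argument is, in essence, a correct reconstruction of the standard proof of that cited result: after tilting so that $\optx=0$, $\optx^*=0$, $F\ge 0$ and $\inv{[\subdiff F]}(0)=\argmin F$, the implication from \eqref{eq:subreg:convex} to subregularity is the elementary subgradient estimate (with $V=X^*$ and $\kappa=1/\gamma$, and the empty-subdifferential case handled trivially), and the converse is Ekeland's variational principle with $\epsilon=\gamma\rho^2$, $\lambda=\rho/2$, followed by the Moreau--Rockafellar sum rule; the bookkeeping you flagged does close, since $\rho=\dist(x,S)\le\norm{x}$ (because $0\in S$) keeps $y$ in $U$ and $\norm{y^*}\le 2\gamma\rho$ keeps $y^*$ in $V$ for $x$ in a small enough ball, and the balance $\lambda(\rho-\lambda)$ indeed produces the factor $4$ in $\gamma<1/(4\kappa)$.

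The one genuine flaw is the throwaway justification of the lower-semicontinuity reduction. It is true that the lsc hull $\bar F$ agrees with $F$, together with its subdifferential, at every point where $\subdiff F\ne\emptyset$; but the claim that it does not change $\argmin F$ is false in general, and $\argmin F$ is exactly the set $\inv{[\subdiff F]}(\optx^*)$ (after tilting) whose distance enters \eqref{eq:subreg:convex}. For example, in $\R^2$ take $F=\delta_D+x_2$ with $D$ the open upper half-plane together with the ray $\{(t,0):t\le 1/2\}$: then $\argmin F$ is that ray, while $\argmin\bar F$ is the whole line $\{x_2=0\}$, a strictly larger set at positive distance from parts of it. Consequently the growth condition proved for $\bar F$ (distance to the larger set) does not transfer back to \eqref{eq:subreg:convex} for $F$, and conversely the assumed subregularity of $\subdiff F$ need not pass to $\subdiff\bar F$, so the hull trick does not by itself repair a non-lsc $F$. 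The clean resolution is simply that the cited theorem assumes $F$ proper and lower semicontinuous — hypotheses the paper's restatement silently drops but which hold in every application here ($J_\delta+\alpha_\delta R$ with $R$ proper, convex, lsc); under those hypotheses your proof is complete and the constants match the statement.
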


\begin{theorem}[{\cite[Theorem 3.5]{aragon2008characterization}}]
    \label{thm:subreg:convex:strong}
    Let $F: X \to \extR$ be a convex function on a Banach space $X$, $\optx \in X$, and $\optx^* \in \subdiff F(\optx)$.
    Then $\subdiff F$ is strongly metrically subregular at $\optx$ for $\optx^*$ if and only if there exists a neighbourhood $U$ of $\optx$ and a constant $\gamma>0$ such that
    \begin{equation}
        \label{eq:subreg:convex:strong}
        F(x) \ge F(\optx) + \dualprod{\optx^*}{x-\optx} + \gamma \norm{x-\optx}_X^2
        \quad (x \in U).
    \end{equation}
    More precisely, \eqref{eq:subreg:convex:strong} implies strong metric subregularity with $\kappa=1/\gamma$ while strong metric subregularity implies \eqref{eq:subreg:convex:strong} for any $0 < \gamma < 1/(4\kappa)$.
\end{theorem}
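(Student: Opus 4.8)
The plan is to bootstrap everything from \cref{thm:subreg:convex} together with the elementary fact, recalled above, that strong metric subregularity of $H\defeq\subdiff F$ at $\optx$ for $\optx^*$ is exactly metric subregularity at $\optx$ for $\optx^*$ with $\optx$ an \emph{isolated} point of $\inv H(\optx^*)$. As a normalisation I would first replace $F$ by $x\mapsto F(x+\optx)-F(\optx)-\dualprod{\optx^*}{x}$, which shifts the reference point to $\optx=0$, the reference subgradient to $\optx^*=0$, makes $F\ge 0=F(0)$ with $0\in\subdiff F(0)$, and leaves all constants untouched; the subdifferential translates by $\optx^*$ and the two conditions in the statement pass over to the corresponding normalised statements, so it suffices to treat this situation.

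\textbf{Quadratic growth $\Rightarrow$ strong metric subregularity.} Suppose $F(x)\ge\gamma\norm{x}_X^2$ for $x$ in a neighbourhood $U$ of $0$. Fix such an $x$ and any $x^*\in\subdiff F(x)$. Evaluating the subgradient inequality $F(y)\ge F(x)+\dualprod{x^*}{y-x}$ at $y=0$ gives $0=F(0)\ge F(x)-\dualprod{x^*}{x}$, hence $\dualprod{x^*}{x}\ge F(x)\ge\gamma\norm{x}_X^2$, and therefore $\norm{x^*}_{X^*}\norm{x}_X\ge\gamma\norm{x}_X^2$, i.e. $\norm{x}_X\le\gamma^{-1}\norm{x^*}_{X^*}$. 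As this holds for every $x^*\in\subdiff F(x)$, in particular for every such $x^*$ lying in an arbitrary prescribed neighbourhood $V$ of $0$, passing to the infimum yields $\norm{x}_X\le\gamma^{-1}\dist(0,\subdiff F(x)\isect V)$ for all $x\in U$. This is strong metric subregularity with $\kappa=1/\gamma$.

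\textbf{Strong metric subregularity $\Rightarrow$ quadratic growth.} Conversely, assume $\subdiff F$ is strongly metrically subregular at $0$ for $0$ with constant $\kappa$. By the isolatedness characterisation there is $r>0$ with $\inv{[\subdiff F]}(0)\isect\B(0,r)=\set{0}$, and since $\dist(x,\inv{[\subdiff F]}(0))\le\norm{x}_X$ always, $\subdiff F$ is a fortiori metrically subregular at $0$ for $0$ with the same $\kappa$. Hence \cref{thm:subreg:convex} applies: for any fixed $0<\gamma<1/(4\kappa)$ there is a neighbourhood $U$ of $0$ on which $F(x)\ge\gamma\dist^2(x,\inv{[\subdiff F]}(0))$. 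Now restrict to $x\in U\isect\B(0,r/2)$: for any $z\in\inv{[\subdiff F]}(0)$ with $z\ne 0$ one has $\norm{x-z}_X\ge\norm{z}_X-\norm{x}_X\ge r-r/2\ge\norm{x}_X$, while $z=0$ gives $\norm{x-z}_X=\norm{x}_X$, so $\dist(x,\inv{[\subdiff F]}(0))=\norm{x}_X$. Therefore $F(x)\ge\gamma\norm{x}_X^2$ on $U\isect\B(0,r/2)$, which is \eqref{eq:subreg:convex:strong} after undoing the normalisation.

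\textbf{Main obstacle.} Essentially all of the real work — and the reason $\gamma$ is constrained to $(0,1/(4\kappa))$ rather than reaching $1/\kappa$ — sits inside \cref{thm:subreg:convex}: turning a one-sided Lipschitz-type estimate on the set-valued map $\subdiff F$ into a genuine quadratic lower bound on the convex function $F$ proceeds via a mean-value/integration argument along segments and costs the factor $4$. Granting that result, the two implications above are routine: one is the subgradient inequality combined with the Cauchy–Schwarz bound $\dualprod{x^*}{x}\le\norm{x^*}_{X^*}\norm{x}_X$, the other is the observation that isolatedness of the reference point in the preimage collapses $\dist(x,\inv{[\subdiff F]}(0))$ to $\norm{x}_X$ on a small enough ball. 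The only points requiring care are bookkeeping ones — that the normalisation preserves both $\kappa$ and $\gamma$, that the neighbourhood $V$ in the definition may be taken arbitrary (or all of $X$) in the first implication, and that shrinking $U$ to $U\isect\B(0,r/2)$ in the second is legitimate precisely because $\dist\bigl(0,\inv{[\subdiff F]}(0)\setminus\set{0}\bigr)\ge r$.
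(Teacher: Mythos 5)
The paper itself contains no proof of \cref{thm:subreg:convex:strong}: it is imported directly from \cite[Theorem 3.5]{aragon2008characterization}, exactly as \cref{thm:subreg:convex} is imported from Theorem 3.3 of the same reference, so there is no internal argument to compare yours against. What you have done instead is supply a proof by bootstrapping the strong statement from the non-strong one, and your derivation is correct. The easy direction (growth implies strong subregularity with $\kappa=1/\gamma$) is exactly the subgradient inequality at $y=\optx$ combined with the dual-norm bound, valid for every $x^* \in \subdiff F(x) \isect V$ and trivially when that set is empty. For the converse, your two observations are the right ones: strong metric subregularity gives metric subregularity with the same $\kappa$ (since $\dist(x, \inv{[\subdiff F]}(\optx^*)) \le \norm{x-\optx}_X$ because $\optx$ lies in the preimage) together with isolatedness of $\optx$ in $\inv{[\subdiff F]}(\optx^*)$; and isolatedness collapses $\dist(x, \inv{[\subdiff F]}(\optx^*))$ to $\norm{x-\optx}_X$ on a small enough ball, turning \eqref{eq:subreg:convex} into \eqref{eq:subreg:convex:strong} with the same $\gamma \in (0, 1/(4\kappa))$. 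The normalisation is also harmless, since subtracting the affine minorant merely translates $\subdiff F$ and its preimages. The only caveat is that your proof is exactly as self-contained as \cref{thm:subreg:convex}, which you (like the paper) take on faith from the literature; the cited reference proves the strong case directly, but since the paper treats both theorems as external results, your reduction is a legitimate and arguably illuminating alternative, as it makes transparent why the same threshold $1/(4\kappa)$ appears in both statements and isolates all the genuinely hard work in the non-strong characterisation.
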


We call the expressions \eqref{eq:subreg:convex} and \eqref{eq:subreg:convex:strong} the \term{semi-strong} and \term{strong local subdifferentiability} of $F$ at $\optx$ for $\optx^*$.
Compared to standard strong subdifferentiability\footnote{Equivalent to strong convexity in Hilbert spaces; see, e.g., \cite{clasonvalkonen2020nonsmooth}.}, both conditions localise the notion to a neighbourhood of $\optx$. Moreover, \eqref{eq:subreg:convex} in a sense squeezes the set $\inv{[\subdiff F]}(\optx^*)$ into a single point. Consider, for example, $F(x)=\max\{0,\abs{x}-1\}$ in $\R$. Let $\optx \in [-1, 1]$ be arbitrary and $\optx^*=0$. Then $\inv{[\subdiff F]}(\optx^*)=[-1, 1]$, so that \eqref{eq:subreg:convex} only requires $F$ to grow once $x$ leaves $[-1, 1] \ni \optx$ instead of immediately as it leaves $\{\optx\}$ as is required by \eqref{eq:subreg:convex:strong}.
In the context of optimisation methods, \eqref{eq:subreg:convex} is useful for the study of convergence of iterates to an entire set of minimisers of $F$, without singling out one, while \eqref{eq:subreg:convex:strong} is useful for studying the convergence to a known specific minimiser. In the following \cref{sec:linear}, we will likewise work both with a set of ground-truths $\hat X$ and a specific ground-truth $\hat x$.

Minding the locality of the definitions, using Clarke subdifferentials \cite{clarke1990optimization}, it is not difficult to extend \cref{thm:subreg:convex,thm:subreg:convex:strong} to locally convex functions, i.e., non-convex functions that locally satisfy a second-order growth condition. However, to get useful regularisation results, we need to replace (non-strong) metric subregularity with an intermediate and slightly relaxed version, motivated by the notions of partial submonotonicity and subregularity introduced in \cite{tuomov-subreg}. We therefore handle non-convex $F$ through direct definitions analogous to \eqref{eq:subreg:convex} and \eqref{eq:subreg:convex:strong}.

Namely, on Banach spaces $X$ and $Y$, let $A \in \linear(X; Y)$ and let $F: X \to \extR$ be a (possibly non-convex) function. Also pick $\mu>0$. Then we say that $f$ is \term{$(A, \mu)$-strongly locally subdifferentiable} at $\optx \in X$ for $\optx^* \in X^*$ with respect to a set $\opt X \subset X$ if there exists a neighbourhood $U$ of $\optx$ and a constant $\gamma>0$ such that
\begin{equation}
    \label{eq:subreg:strong-subdiff}
    F(x) \ge F(\optx) + \dualprod{\optx^*}{x-\optx} + \gamma \norm{A(x-\optx)}_Y^2 + \gamma \mu \dist^2(x, \opt X)
    \quad (x \in U).
\end{equation}
Typically $\opt X \ni \optx$.
We do not assume $\optx^*$ to be a subdifferential of $F$ at $\optx$ in any conventional sense.
The idea is that for $F(x)=\frac{1}{2}\norm{Ax-b_\delta}_Y^2 + \alpha_\delta R(x)$ as in \eqref{eq:intro:problem}, we will in $\range A^*$ exploit the available growth away from $\optx$, but otherwise only the growth away from the set $\opt X$.

We will in \cref{sec:examples} provide examples of the different forms of strong local subdifferentiability and strong metric subregularity.
Before this, in the next section, we will use them to derive regularisation theory.

\section{Linear inverse problems}
\label{sec:linear}

We now derive subregularity-based regularisation theory for linear inverse problems.
For all corruption parameters $\delta > 0$ and measured data $b_\delta \in Y$, we approximate solutions $\hat x \in X$ to the problem
\begin{equation}
    \label{eq:linear:problem:truth}
    A\hat x=\hat b,
\end{equation}
with unknown data $\hat b$, through the regularised problems
\begin{equation}
    \label{eq:linear:problem:regularised}
    \min_{x \in X} J_\delta(x) + \alpha_\delta R(x)
    \quad\text{where}\quad
    J_\delta(x) \defeq \frac{1}{2}\norm{Ax-b_\delta}_Y^2.
\end{equation}

\subsection{General assumptions and concepts}

We denote the set of $R$-minimising solutions to \eqref{eq:linear:problem:truth} by $\hat X$.
Under the assumptions that we state next, these are characterised exactly through the satisfaction for some $\hat w \in Y$ of the \term{basic source condition}
\begin{equation}
    \label{eq:linear:basic-source-condition}
    A\hat x=\hat b
    \quad\text{and}\quad
    A^*\hat w + \subdiff R(\hat x) \ni 0.
\end{equation}

\begin{assumption}[Corruption level and solution accuracy]
    \label{ass:linear:main}
    On a Banach space $X$ and a Hilbert space $Y$, the regularisation functional $R: X \to \extR$ is convex, proper, and lower semicontinuous, and $A \in \linear(X; Y)$.
    The parametrisation $\delta>0$ of the corruption level is such that the corrupted measurements $b_\delta \in Y$ of the ground-truth $\hat b \in Y$ satisfy
    \begin{equation}
        \label{eq:linear:noise}
        \norm{b_\delta-\hat b}_Y \le \delta
        \quad (\delta>0).
    \end{equation}
    Moreover, we assume that \eqref{eq:linear:problem:regularised} is solved to a given accuracy $e_\delta \ge 0$ in the sense that
    \begin{equation}
        \label{eq:linear:accuracy}
        [J_\delta + \alpha_\delta R](x_\delta) - [J_\delta+\alpha_\delta R](\hat x) \le e_\delta
        \quad (\delta>0)
    \end{equation}
    for any given $\hat x \in \hat X$.
\end{assumption}

It does not matter which $\hat x \in \hat X$ we use in the accuracy condition \cref{eq:linear:accuracy} since
\[
    J_\delta(\hat x) + \alpha_\delta R(\hat x)
    = \frac{1}{2}\norm{\hat b-b_\delta}^2 + \alpha_\delta R(\hat x)
    = J_\delta(\alt x) + \alpha_\delta R(\alt x)
    \quad (\hat x, \alt x \in \hat X).
\]
Also, since $\hat x$ is not in general a solution to \eqref{eq:linear:problem:regularised} for $\alpha_\delta > 0$, even with $e_\delta=0$, the problem \eqref{eq:linear:problem:regularised} may not have to be solved to full accuracy to satisfy \eqref{eq:linear:accuracy}

\subsection{A basic optimisation-based estimate}

To motivate our contributions, we start by deriving basic estimates based on Bregman divergencesThese are similar to those in \cite{burger2004convergence}, however allow for the inexact solution of \eqref{eq:linear:problem:regularised}.
Specifically, the following result shows that for convergence, as the corruption level $\delta \downto 0$, we need $\alpha_\delta \downto 0$, $\delta^2/\alpha_\delta \downto 0$, and $e_\delta/\alpha_\delta \downto 0$.

\begin{theorem}
    \label{thm:linear:bregman}
    Suppose \cref{ass:linear:main} holds and that $\hat x \in X$ satisfies for some $\hat w \in Y^*$ the basic source condition \eqref{eq:linear:basic-source-condition}.
    Then
    \[
        0 \le B_R^{-A^*\hat w}(x_\delta, \hat x)
        \le \frac{e_\delta}{\alpha_\delta} + \frac{\delta^2}{\alpha_\delta} + \alpha_\delta\norm{\hat w}_Y^2.
    \]
\end{theorem}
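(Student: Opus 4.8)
The plan is to combine the inexact optimality condition \eqref{eq:linear:accuracy} with the source condition \eqref{eq:linear:basic-source-condition} and two applications of Young's inequality \eqref{eq:intro:young}, after first rewriting the Bregman divergence in terms of the data residual $Ax_\delta - \hat b$.

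For the lower bound $0 \le B_R^{-A^*\hat w}(x_\delta, \hat x)$, I would simply note that the source condition gives $-A^*\hat w \in \subdiff R(\hat x)$, so by convexity of $R$ the subdifferential inequality $R(x_\delta) \ge R(\hat x) + \dualprod{-A^*\hat w}{x_\delta - \hat x}$ holds; rearranging this is precisely the assertion $B_R^{-A^*\hat w}(x_\delta, \hat x) \ge 0$ via the definition \eqref{eq:intro:bregman}.

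For the upper bound, I would first use $A^*$-duality and $A\hat x = \hat b$ to write
\[
    B_R^{-A^*\hat w}(x_\delta, \hat x) = \iprod{\hat w}{A x_\delta - \hat b}_Y + R(x_\delta) - R(\hat x).
\]
Next, expanding \eqref{eq:linear:accuracy} and using $\tfrac12\norm{A\hat x - b_\delta}_Y^2 = \tfrac12\norm{\hat b - b_\delta}_Y^2 \le \tfrac12\delta^2$ from \eqref{eq:linear:noise} gives
\[
    \alpha_\delta\bigl(R(x_\delta) - R(\hat x)\bigr) \le e_\delta + \tfrac12\delta^2 - \tfrac12\norm{Ax_\delta - b_\delta}_Y^2.
\]
Multiplying the Bregman identity by $\alpha_\delta$ and substituting this bound, I would then be left with estimating $\iprod{\alpha_\delta\hat w}{Ax_\delta - \hat b}_Y - \tfrac12\norm{Ax_\delta - b_\delta}_Y^2$. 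Splitting $Ax_\delta - \hat b = (Ax_\delta - b_\delta) + (b_\delta - \hat b)$, the first piece completes the square against $-\tfrac12\norm{Ax_\delta - b_\delta}_Y^2$ to give $\le \tfrac12\alpha_\delta^2\norm{\hat w}_Y^2$, while the second piece is controlled by Cauchy--Schwarz, \eqref{eq:linear:noise}, and then \eqref{eq:intro:young} with $\alpha=1$: $\iprod{\alpha_\delta\hat w}{b_\delta - \hat b}_Y \le \alpha_\delta\delta\norm{\hat w}_Y \le \tfrac12\alpha_\delta^2\norm{\hat w}_Y^2 + \tfrac12\delta^2$. Collecting terms yields $\alpha_\delta B_R^{-A^*\hat w}(x_\delta, \hat x) \le \alpha_\delta^2\norm{\hat w}_Y^2 + \delta^2 + e_\delta$, and dividing by $\alpha_\delta>0$ gives the claimed estimate.

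There is no substantial obstacle here; the only point requiring care is arranging the Young/completion-of-square steps so that the term $-\tfrac12\norm{Ax_\delta - b_\delta}_Y^2$ produced by \eqref{eq:linear:accuracy} is exactly absorbed and the remaining constants land as $\delta^2/\alpha_\delta$ and $\alpha_\delta\norm{\hat w}_Y^2$ without stray factors. An equivalent route, matching the pattern of the heavier estimates later in the paper, is to instead expand $\norm{Ax_\delta - b_\delta}_Y^2$ via Pythagoras' identity \eqref{eq:intro:three-point} in terms of $Ax_\delta - \hat b$ and $\hat b - b_\delta$ and then complete the square once.
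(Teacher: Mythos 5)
Your proposal is correct and follows essentially the same route as the paper: both use the accuracy bound \eqref{eq:linear:accuracy}, the identity $B_R^{-A^*\hat w}(x_\delta,\hat x)=\iprod{\hat w}{A(x_\delta-\hat x)}+R(x_\delta)-R(\hat x)$ via $A\hat x=\hat b$, and Young-type estimates to absorb the data residual, arriving at the same constants. The only cosmetic difference is that the paper groups $\hat b-b_\delta-\alpha_\delta\hat w$ into one vector (after Pythagoras' identity) and applies Young's inequality once before expanding the square, whereas you split the residual first and apply Young twice; the computations are equivalent.
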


\begin{proof}
    By \cref{ass:linear:main} and expansion we have
    \[
        \begin{aligned}[t]
        e_\delta
        &
        \ge
        [J_\delta+\alpha_\delta R](x_\delta)-[J_\delta+\alpha_\delta R](\hat x)
        \\
        &
        =
        \frac{1}{2}\norm{Ax_\delta-b_\delta}_Y^2 - \frac{1}{2}\norm{A\hat x - b_\delta}_Y^2 + \alpha_\delta[R(x_\delta)-R(\hat x)].
        \end{aligned}
    \]
    Continuing with Pythagoras' three-point identity \eqref{eq:intro:three-point} we rearrange the squared norms to obtain
    \[
        e_\delta
        \ge
        \iprod{A\hat x-b_\delta}{A(x_\delta-\hat x)}
        + \frac{1}{2}\norm{A(x_\delta-\hat x)}_Y^2
        + \alpha_\delta[R(x_\delta)-R(\hat x)].
    \]
    Since $A\hat x=\hat b$, using the definition \eqref{eq:intro:bregman} of the Bregman divergence, this further rerarranges as
    \[
        e_\delta
        \ge
         \iprod{\hat b-b_\delta-\alpha_\delta \hat w}{A(x_\delta-\hat x)} + \frac{1}{2}\norm{A(x_\delta-\hat x)}_Y^2 +\alpha_\delta B_R^{-A^*\hat w}(x_\delta, \hat x).
    \]
    Now using Young's inequality \eqref{eq:intro:young} on the inner product term , we obtain
    \begin{equation}
        \label{eq:optim:jr-est}
        e_\delta
        \ge
        -\frac{1}{2}\norm{\hat b-b_\delta-\alpha_\delta \hat w}_Y^2 + \alpha_\delta B_R^{-A^*\hat w}(x_\delta, \hat x).
    \end{equation}
    Further estimating using Young's inequality that
    \begin{equation*}
        \frac{1}{2\alpha_\delta}\norm{\hat b-b_\delta-\alpha_\delta \hat w}_Y^2
        \le
        \frac{1}{\alpha_\delta}\norm{b_\delta-\hat b}_Y^2 + \alpha_\delta\norm{\hat w}_Y^2
        \le
        \frac{\delta^2}{\alpha_\delta} + \alpha_\delta\norm{\hat w}_Y^2,
    \end{equation*}
    we therefore obtain the claim.
\end{proof}

\begin{remark}
    In place of \eqref{eq:optim:jr-est} we could alternatively estimate
    \[
        \iprod{\hat b-b_\delta-\alpha_\delta \hat w}{A(x_\delta-\hat x)} + \frac{1}{2}\norm{A(x_\delta-\hat x)}_Y^2
        \ge
        -\frac{1}{2(1-\alpha_\delta)}\norm{\hat b-b_\delta-\alpha_\delta \hat w}_Y^2
        + \frac{\alpha_\delta}{2}\norm{A(x_\delta-\hat x)}_Y^2.
    \]
    Akin to the approach of \cite{hofmann2007convergence}, we would then obtain
    \[
        \frac{1}{2\alpha_\delta }\norm{A(x_\delta-\hat x)}_Y^2 + B_R^{-A^*\hat w}(x_\delta, \hat x)
        \le
        \frac{e_\delta}{\alpha_\delta} + \frac{\alpha_\delta}{1-\alpha_\delta}\norm{\hat w}_Y^2 + \frac{\delta^2}{\alpha_\delta(1-\alpha_\delta)}.
    \]
    Thus $A(x_\delta-\hat x) \to 0$ significantly faster than $B_R^{-A^*\hat w}(x_\delta, \hat x) \to 0$ provided, as before, that  $\alpha_\delta \to 0$, $\delta/\alpha_\delta \to 0$, and $e_\delta/\alpha_\delta \to 0$ as $\delta \downto 0$.
    This motivates our next results,  essentially combining, via strong metric subregularity, the two different convergences to yield norm convergence.
\end{remark}

\subsection{Estimates based on a strong source condition}

We start with the next lemma that we will {be used} to show that the approximate regularised solutions $x_\delta$ are close to $\hat x$ for small enough noise level, regularisation parameter, and accuracy parameter.

\begin{lemma}
    \label{lemma:linear:a-convergence}
    Suppose \cref{ass:linear:main} holds at $\hat x \in \hat X$. Then
    \[
        \norm{A(x_\delta-\hat x)}_Y^2 \le 4(e_\delta + \delta + \alpha_\delta R(\hat x))
    \]
    and
    \[
        R(x_\delta) \le R(\hat x) + \frac{e_\delta+\delta^{2}}{\alpha_\delta}.
    \]
\end{lemma}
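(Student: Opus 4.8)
The plan is to work directly from the inexact optimality condition \eqref{eq:linear:accuracy}, expand the squared fidelity norm, substitute $A\hat x=\hat b$, and then split the resulting inequality in two different ways — once by discarding the fidelity term and once by discarding the $\alpha_\delta R(x_\delta)$ term. First I would expand: by \eqref{eq:linear:accuracy} and the definition of $J_\delta$,
\[
    e_\delta \ge \frac{1}{2}\norm{Ax_\delta-b_\delta}_Y^2 - \frac{1}{2}\norm{A\hat x-b_\delta}_Y^2 + \alpha_\delta\bigl(R(x_\delta)-R(\hat x)\bigr).
\]
Since $A\hat x=\hat b$ by \eqref{eq:linear:problem:truth} and $\norm{b_\delta-\hat b}_Y\le\delta$ by \eqref{eq:linear:noise}, we have $\frac{1}{2}\norm{A\hat x-b_\delta}_Y^2 \le \frac{1}{2}\delta^2$, so after rearranging
\[
    \frac{1}{2}\norm{Ax_\delta-b_\delta}_Y^2 + \alpha_\delta R(x_\delta) \le e_\delta + \frac{1}{2}\delta^2 + \alpha_\delta R(\hat x).
\]

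For the second estimate I would drop the nonnegative term $\frac{1}{2}\norm{Ax_\delta-b_\delta}_Y^2$, divide by $\alpha_\delta>0$, and bound $\frac{1}{2}\delta^2\le\delta^2$; this yields $R(x_\delta)\le R(\hat x)+(e_\delta+\delta^2)/\alpha_\delta$ immediately, using nothing but the accuracy and noise bounds (no source condition, unlike \cref{thm:linear:bregman}). For the first estimate I would instead keep the fidelity term and discard $\alpha_\delta R(x_\delta)$, using that $R$ is bounded below — without loss of generality $R\ge 0$, as adding a constant to $R$ changes neither $\hat X$ nor the difference in \eqref{eq:linear:accuracy}. This gives $\norm{Ax_\delta-b_\delta}_Y^2 \le 2e_\delta+\delta^2+2\alpha_\delta R(\hat x)$. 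Writing $A(x_\delta-\hat x)=(Ax_\delta-b_\delta)+(b_\delta-\hat b)$, applying the triangle inequality with \eqref{eq:linear:noise}, and using $(a+b)^2\le 2a^2+2b^2$,
\[
    \norm{A(x_\delta-\hat x)}_Y^2 \le \bigl(\norm{Ax_\delta-b_\delta}_Y+\delta\bigr)^2 \le 2\norm{Ax_\delta-b_\delta}_Y^2 + 2\delta^2 \le 4e_\delta + 4\delta^2 + 4\alpha_\delta R(\hat x),
\]
and finally bounding $\delta^2\le\delta$ for small $\delta$ — which is all that is relevant as $\delta\downto 0$ — delivers the stated bound $4(e_\delta+\delta+\alpha_\delta R(\hat x))$.

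I do not expect a genuine obstacle: the argument is elementary once one commits to throwing away the "wrong-sign" contributions (the fidelity term for the $R$-bound, the term $\alpha_\delta R(x_\delta)$ for the $A$-bound). The only points that need a little care are (i) using $R$ solely through its lower bound, so that no source condition or subregularity is invoked, and (ii) the deliberately loose constants — the factor $4$ and the appearance of $\delta$ rather than $\delta^2$ — which arise from the crude inequality $(a+b)^2\le 2a^2+2b^2$ and from not tracking powers of $\delta\le 1$.
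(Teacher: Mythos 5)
Your proof is correct and follows essentially the same route as the paper's: combine the accuracy bound \eqref{eq:linear:accuracy} with $A\hat x=\hat b$, the noise bound \eqref{eq:linear:noise}, and the inequality $(a+b)^2\le 2a^2+2b^2$ into one estimate, then drop the fidelity term for the $R$-bound and the $\alpha_\delta R(x_\delta)$ term for the $A$-bound. The two small caveats you flag — using only a lower bound $R\ge 0$ to discard $\alpha_\delta R(x_\delta)$, and that the natural outcome is $\delta^2$ rather than the stated $\delta$ — are shared (implicitly) by the paper's own proof, so they do not set your argument apart.
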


\begin{proof}
    By \cref{ass:linear:main}, since $A\hat x=\hat b$, first using Young's inequality we have
    \[
        \begin{aligned}
        \frac{1}{2}\norm{A(x_\delta-\hat x)}_Y^2
        + 2\alpha_\delta R(x_\delta)
        &
        \le
        \norm{Ax_\delta-b_\delta}_Y^2
        + 2\alpha_\delta R(x_\delta)
        +\norm{b_\delta-\hat b}_Y^2
        \\
        &
        \le
        2e_\delta + 2\norm{b_\delta-\hat b}_Y^2 + 2\alpha_\delta R(\hat x)
        \\
        &
        \le 2(e_\delta + \delta^2 + \alpha_\delta R(\hat x)).
        \end{aligned}
    \]
    This finishes the proof.
\end{proof}

We will need the following “strong source condition” based on strong metric subregularity.

\begin{assumption}[Strong source condition]
    \label{ass:linear:strong-source-condition}
    Assume that $\hat x \in X$ satisfies for some $\hat w \in Y$ the basic source condition \eqref{eq:linear:basic-source-condition}.
    Moreover, for all $\delta>0$, for given $\alpha_\delta,\gamma_\delta>0$, assume that $J_\delta+\alpha_\delta R$ is strongly locally subdifferentiable at $\hat x$ for $J_\delta'(\hat x)-\alpha_\delta A^*\hat w$ with respect to the norm
    \[
        \norm{x}_\delta \defeq \sqrt{\norm{Ax}_Y^2 + \gamma_\delta \norm{x}_X^2} \quad (x \in X).
    \]
    The factor $\gamma>0$ of strong local subdifferentiability, as defined in \eqref{eq:subreg:strong-subdiff}, must be independent of $\delta>0$ and, for some $\rho>0$, we must have
    \begin{equation}
        \label{eq:linear:strong-subreg:u-cond}
        U \supset U_\rho \defeq \{x \in X \mid \norm{A(x-\hat x)} \le \rho,\, R(x) \le R(\hat x) + \rho\}.
    \end{equation}
    Then we say that $\hat x$ satisfies for $\hat w$ the \term{strong source condition}.
\end{assumption}

As we recall from \cref{thm:subreg:convex:strong} due to \cite{aragon2008characterization,artacho2013metric}, the strong local subdifferentiability required in \cref{ass:linear:strong-source-condition} is equivalent to the strong metric subregularity of $\subdiff[J_\delta+\alpha_\delta R]$, i.e., of $x \mapsto A^*(Ax-b^\delta)+\alpha_\delta \subdiff R(x)$ at $\hat x$ for $A^*(A\hat x-b_\delta)-\alpha_\delta A^*\hat w=A^*(\hat b-b_\delta-\alpha_\delta \hat w)$.

\begin{theorem}
    \label{thm:linear:strong-subreg}
    Suppose \cref{ass:linear:main} and the \emph{strong} source condition of \cref{ass:linear:strong-source-condition} hold at $\hat x$ for some $\hat w$.
    Suppose $(e_\delta+\delta^2)/\alpha_\delta \downto 0$ and $\alpha_\delta \downto 0$ as $\delta \downto 0$.
    Then there exists $\bar\delta>0$ such that if  $\delta \in (0, \bar\delta)$, we have
    \begin{equation}
        \label{eq:linear:strong-subreg:estimate}
        \norm{x_\delta-\hat x}_X^2
        \le
        \frac{e_\delta}{\gamma\gamma_\delta}
        +\frac{\delta^2}{2\gamma^2\gamma_\delta}
        +\frac{\alpha_\delta^2}{2\gamma^2\gamma_\delta}\norm{\hat w}_Y^2.
    \end{equation}
\end{theorem}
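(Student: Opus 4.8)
The plan is to combine the two "halves" of convergence: the growth of $J_\delta + \alpha_\delta R$ around $\hat x$ guaranteed by strong local subdifferentiability, and the a-priori estimates of \cref{lemma:linear:a-convergence} that put $x_\delta$ inside the neighbourhood $U$ on which that growth holds. First I would invoke \cref{lemma:linear:a-convergence}: it gives $\norm{A(x_\delta-\hat x)}_Y^2 \le 4(e_\delta + \delta + \alpha_\delta R(\hat x))$ and $R(x_\delta) \le R(\hat x) + (e_\delta+\delta^2)/\alpha_\delta$. Under the hypotheses $(e_\delta+\delta^2)/\alpha_\delta \downto 0$ and $\alpha_\delta \downto 0$, both right-hand sides tend to $0$ as $\delta \downto 0$, so there is a $\bar\delta>0$ with $x_\delta \in U_\rho \subset U$ for all $\delta \in (0,\bar\delta)$, where $U_\rho$ and $U$ are as in \eqref{eq:linear:strong-subreg:u-cond}.

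Next, on that neighbourhood I would apply the strong local subdifferentiability inequality \eqref{eq:subreg:strong-subdiff} for $F = J_\delta + \alpha_\delta R$ at $\hat x$ for the functional $\optx^* = J_\delta'(\hat x) - \alpha_\delta A^*\hat w$, measured in the norm $\norm{\freevar}_\delta$. Since $\hat x$ itself is the reference set (so $\dist(x,\opt X) = \norm{x - \hat x}_X$ if $\opt X = \{\hat x\}$, or in any case $\norm{\freevar}_\delta^2 = \norm{A\freevar}_Y^2 + \gamma_\delta\norm{\freevar}_X^2$ already incorporates both terms), this reads
\[
    [J_\delta+\alpha_\delta R](x_\delta) \ge [J_\delta + \alpha_\delta R](\hat x) + \dualprod{J_\delta'(\hat x)-\alpha_\delta A^*\hat w}{x_\delta-\hat x} + \gamma\bigl(\norm{A(x_\delta-\hat x)}_Y^2 + \gamma_\delta\norm{x_\delta-\hat x}_X^2\bigr).
\]
Combining this with the accuracy bound \eqref{eq:linear:accuracy}, $[J_\delta+\alpha_\delta R](x_\delta) - [J_\delta+\alpha_\delta R](\hat x) \le e_\delta$, yields
\[
    \gamma\gamma_\delta\norm{x_\delta-\hat x}_X^2 + \gamma\norm{A(x_\delta-\hat x)}_Y^2 \le e_\delta - \dualprod{J_\delta'(\hat x)-\alpha_\delta A^*\hat w}{x_\delta-\hat x}.
\]
Now $J_\delta'(\hat x) = A^*(A\hat x - b_\delta) = A^*(\hat b - b_\delta)$, so $J_\delta'(\hat x) - \alpha_\delta A^*\hat w = A^*(\hat b - b_\delta - \alpha_\delta\hat w)$, and the dual pairing becomes the Hilbert-space inner product $\iprod{\hat b - b_\delta - \alpha_\delta\hat w}{A(x_\delta - \hat x)}_Y$. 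I would then bound this inner product by Young's inequality \eqref{eq:intro:young} against the $\gamma\norm{A(x_\delta-\hat x)}_Y^2$ term on the left: taking the Young parameter to absorb exactly $\gamma\norm{A(x_\delta-\hat x)}_Y^2$, the cross term contributes $\frac{1}{4\gamma}\norm{\hat b - b_\delta - \alpha_\delta\hat w}_Y^2$, which by \eqref{eq:linear:noise} and another application of Young's inequality is at most $\frac{1}{2\gamma}(\delta^2 + \alpha_\delta^2\norm{\hat w}_Y^2)$. This leaves $\gamma\gamma_\delta\norm{x_\delta-\hat x}_X^2 \le e_\delta + \frac{\delta^2}{2\gamma} + \frac{\alpha_\delta^2}{2\gamma}\norm{\hat w}_Y^2$, and dividing by $\gamma\gamma_\delta$ gives precisely \eqref{eq:linear:strong-subreg:estimate}.

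The only real subtlety — the step I expect to need care — is the first one: verifying that $x_\delta$ lands in $U$, which requires matching the structure of $U_\rho$ in \eqref{eq:linear:strong-subreg:u-cond} (controlled via $\norm{A(x-\hat x)}$ and $R(x) - R(\hat x)$) with exactly the two quantities that \cref{lemma:linear:a-convergence} controls, and then picking $\bar\delta$ so that both are $\le \rho$. Everything after that is a bookkeeping exercise in Young's inequality with the right constants; the mild point there is to split the single cross term so that one half of $\gamma\norm{A(x_\delta-\hat x)}_Y^2$ is available for absorption while the $\gamma\gamma_\delta\norm{x_\delta-\hat x}_X^2$ term is kept intact, and to handle the $\alpha_\delta\hat w$ contribution with a second Young step to separate $\delta^2$ from $\alpha_\delta^2\norm{\hat w}_Y^2$. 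I would also note that since $\hat x$ may be any element of $\hat X$ (via the accuracy condition being independent of the choice), no further argument is needed there.
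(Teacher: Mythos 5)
Your proposal is correct and follows essentially the same route as the paper's proof: establish $x_\delta \in U$ via \cref{lemma:linear:a-convergence} and \eqref{eq:linear:strong-subreg:u-cond}, combine the accuracy bound \eqref{eq:linear:accuracy} with the strong local subdifferentiability in the norm $\norm{\freevar}_\delta$, rewrite $J_\delta'(\hat x)-\alpha_\delta A^*\hat w = A^*(\hat b-b_\delta-\alpha_\delta\hat w)$, and absorb the cross term with Young's inequality (cost $\tfrac{1}{4\gamma}\norm{\hat b-b_\delta-\alpha_\delta\hat w}_Y^2$, then split into $\delta^2$ and $\alpha_\delta^2\norm{\hat w}_Y^2$). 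The only cosmetic quibble is that the full term $\gamma\norm{A(x_\delta-\hat x)}_Y^2$ (not half of it) is used for the absorption, exactly as in your earlier displayed computation.
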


\begin{proof}
    We have
    \[
        J_\delta'(\hat x) - \alpha_\delta A^*\hat w
        =A^*(A\hat x-b_\delta-\alpha_\delta \hat w)
        =A^*(\hat b-b_\delta-\alpha_\delta \hat w).
    \]
    By the assumption that  $(e_\delta+\delta^2)/\alpha_\delta \downto 0$ and $\alpha_\delta \downto 0$ as $\delta \downto 0$, \cref{lemma:linear:a-convergence}, and \eqref{eq:linear:strong-subreg:u-cond} in \cref{ass:linear:strong-source-condition}, for suitably small $\delta>0$, we have $x_\delta \in U$.
    Hence by \cref{ass:linear:main,ass:linear:strong-source-condition} followed by Young's inequality \eqref{eq:intro:young},
    \begin{equation}
        \label{eq:linear:subreg:base-est}
        \begin{aligned}[t]
        e_\delta
        &
        \ge
        [J_\delta+\alpha_\delta R](x_\delta)-[J_\delta+\alpha_\delta R](\hat x)
        \\
        &
        \ge \dualprod{J_\delta'(\hat x) - \alpha_\delta  A^*\hat w}{x_\delta-\hat x}
        + \gamma\norm{x_\delta-\hat x}_\delta^2
        \\
        &
        = \iprod{\hat b-b_\delta-\alpha_\delta \hat w}{A(x_\delta-\hat x)}
        + \gamma\norm{A(x_\delta-\hat x)}_Y^2
        + \gamma\gamma_\delta\norm{x_\delta-\hat x}_X^2
        \\
        &
        \ge
        -\frac{1}{4\gamma}\norm{\hat b-b_\delta-\alpha_\delta \hat w}_Y^2
        +\gamma\gamma_\delta\norm{x_\delta-\hat x}_X^2.
        \end{aligned}
    \end{equation}
    Thus, again using Young's inequality and $\norm{\hat b - b_\delta} \le \delta$ from \cref{ass:linear:main}, we obtain
    \begin{equation}
        \label{eq:linear:subreg:final-est}
        \norm{x_\delta-\hat x}_X^2
        \le
        \frac{e_\delta}{\gamma\gamma_\delta}
        +\frac{1}{4\gamma^2\gamma_\delta}\norm{\hat b-b_\delta-\alpha_\delta \hat w}_Y^2
        \le
        \frac{e_\delta}{\gamma\gamma_\delta}
        +\frac{\delta^2}{2\gamma^2\gamma_\delta}
        +\frac{\alpha_\delta^2}{2\gamma^2\gamma_\delta}\norm{\hat w}_Y^2.
    \end{equation}
    This is the claim.
\end{proof}

If $\gamma_\delta \propto \alpha_\delta$, the following corollary shows norm convergence under similar parameter choices as in \cref{thm:linear:bregman}.

\begin{corollary}
    \label{cor:linear:strong-subreg}
    Suppose \cref{ass:linear:main} and the \emph{strong} source condition of \cref{ass:linear:strong-source-condition} hold at $\hat x$.
    If
    \begin{equation}
        \label{eq:linear:strong-subreg:convergence-cond}
        \lim_{\delta \downto 0} \frac{1}{\min\{\alpha_\delta, \gamma_\delta\}}(\alpha_\delta^2, \delta^2, e_\delta)=0,
    \end{equation}
    then
    \[
        \lim_{\delta \downto 0} \norm{x_\delta-\hat x}_X=0.
    \]
\end{corollary}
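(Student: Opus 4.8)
The plan is to obtain the corollary directly from \cref{thm:linear:strong-subreg}: first check that its two running hypotheses follow from the convergence condition \eqref{eq:linear:strong-subreg:convergence-cond}, then feed \eqref{eq:linear:strong-subreg:convergence-cond} into the quantitative estimate \eqref{eq:linear:strong-subreg:estimate} to kill each term. There is essentially no deep content here; all the work has already been done in the theorem, and the corollary is a matter of matching the estimate to the stated parameter condition.

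Concretely, I would start from the elementary observation $\min\{\alpha_\delta,\gamma_\delta\} \le \alpha_\delta$. Applying this to the three components of \eqref{eq:linear:strong-subreg:convergence-cond} gives $\alpha_\delta = \alpha_\delta^2/\alpha_\delta \le \alpha_\delta^2/\min\{\alpha_\delta,\gamma_\delta\} \to 0$, as well as $\delta^2/\alpha_\delta \le \delta^2/\min\{\alpha_\delta,\gamma_\delta\} \to 0$ and $e_\delta/\alpha_\delta \le e_\delta/\min\{\alpha_\delta,\gamma_\delta\} \to 0$, whence $(e_\delta+\delta^2)/\alpha_\delta \to 0$. These are precisely the hypotheses $\alpha_\delta \downto 0$ and $(e_\delta+\delta^2)/\alpha_\delta \downto 0$ required to invoke \cref{thm:linear:strong-subreg}, so there is $\bar\delta>0$ such that \eqref{eq:linear:strong-subreg:estimate} holds for all $\delta \in (0,\bar\delta)$.

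For such $\delta$, I would then bound each summand of \eqref{eq:linear:strong-subreg:estimate} by replacing $\gamma_\delta$ in the denominators with the smaller quantity $\min\{\alpha_\delta,\gamma_\delta\}$, which only enlarges the right-hand side:
\[
    \norm{x_\delta-\hat x}_X^2
    \le
    \frac{1}{\gamma}\cdot\frac{e_\delta}{\min\{\alpha_\delta,\gamma_\delta\}}
    + \frac{1}{2\gamma^2}\cdot\frac{\delta^2}{\min\{\alpha_\delta,\gamma_\delta\}}
    + \frac{\norm{\hat w}_Y^2}{2\gamma^2}\cdot\frac{\alpha_\delta^2}{\min\{\alpha_\delta,\gamma_\delta\}}.
\]
Each of the three fractions on the right is one of the components appearing in \eqref{eq:linear:strong-subreg:convergence-cond} and hence tends to $0$ as $\delta\downto 0$, while $1/\gamma$, $1/(2\gamma^2)$ and $\norm{\hat w}_Y^2/(2\gamma^2)$ are fixed constants independent of $\delta$ (recall $\gamma>0$ is the $\delta$-independent constant of strong local subdifferentiability from \cref{ass:linear:strong-source-condition}). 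Therefore $\norm{x_\delta-\hat x}_X^2 \to 0$, and taking square roots gives $\norm{x_\delta-\hat x}_X \to 0$.

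I do not anticipate any genuine obstacle; the only point requiring a moment's care is confirming that the theorem's hypothesis $\alpha_\delta\downto 0$ really is implied, given that \eqref{eq:linear:strong-subreg:convergence-cond} controls $\alpha_\delta^2$ rather than $\alpha_\delta$ directly — but this is exactly where the inequality $\min\{\alpha_\delta,\gamma_\delta\}\le\alpha_\delta$ is used.
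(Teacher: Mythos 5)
Your proof is correct and follows exactly the route the paper intends: the corollary is stated as an immediate consequence of \cref{thm:linear:strong-subreg}, whose hypotheses $\alpha_\delta \downto 0$ and $(e_\delta+\delta^2)/\alpha_\delta \downto 0$ you correctly extract from \eqref{eq:linear:strong-subreg:convergence-cond} via $\min\{\alpha_\delta,\gamma_\delta\}\le\alpha_\delta$, and whose estimate \eqref{eq:linear:strong-subreg:estimate} you then drive to zero term by term using $\min\{\alpha_\delta,\gamma_\delta\}\le\gamma_\delta$ and the $\delta$-independence of $\gamma$. Nothing is missing.
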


\subsection{Estimates based on a semi-strong source condition}

We now replace the assumption of strong metric subregularity, i.e., strong local subdifferentiability, with mere $(A, \gamma_\delta)$-strong local subdifferentiability.

\begin{assumption}[Semi-strong source condition]
    \label{ass:linear:semi-strong-source-condition}
    We assume that $\hat x \in X$ satisfies for some $\hat w \in Y$ the basic source condition  \eqref{eq:linear:basic-source-condition}.
    Moreover, for all $\delta>0$, for given $\alpha_\delta,\gamma_\delta>0$, assume that $J_\delta+\alpha_\delta R$ is $(A,\gamma_\delta)$-strongly locally subdifferentiable at $\hat x$ for $J_\delta'(\hat x)-\alpha_\delta A^*\hat w$ with respect to $\hat X$.
    The factor $\gamma>0$ and neighbourhood $U=U^{\hat x}$ of $(A,\gamma_\delta)$-strong local subdifferentiability must be independent of $\delta>0$.
    Then we say that $\hat x$ satisfies for $\hat w$ the \term{semi-strong source condition}.
\end{assumption}

It is not that easy to very the semi-strong source condition with $U^{\hat x} \supset U_\rho$ as in \cref{thm:linear:strong-subreg} without assumptions that would verify strong source condition of \cref{ass:linear:strong-source-condition}. We therefore drop this assumption for the next lemma at the cost of weaker results in the ensuing theorem.

\begin{lemma}
    \label{lemma:linear:subreg}
    Suppose \cref{ass:linear:main} and the \emph{semi-strong} source condition of \cref{ass:linear:semi-strong-source-condition} hold at some $\hat x \in X$ for some $\hat w$.
    If $x_\delta \in U^{\hat x}$, then
    \[
        \dist^2(x_\delta, \hat X)
        \le
        \frac{e_\delta}{\gamma\gamma_\delta}
        +\frac{\delta^2}{2\gamma^2\gamma_\delta}
        +\frac{\alpha_\delta^2}{2\gamma^2\gamma_\delta} \norm{\hat w}_Y^2.
    \]
\end{lemma}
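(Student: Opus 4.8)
The plan is to mimic the argument of \cref{thm:linear:strong-subreg}, simply replacing the role of $\norm{x_\delta-\hat x}_X^2$ by $\dist^2(x_\delta, \hat X)$ and using the $(A,\gamma_\delta)$-strong local subdifferentiability inequality \eqref{eq:subreg:strong-subdiff} in place of \eqref{eq:subreg:convex:strong}. First I would record, exactly as in the earlier proof, the identity
\[
    J_\delta'(\hat x) - \alpha_\delta A^*\hat w = A^*(A\hat x - b_\delta - \alpha_\delta \hat w) = A^*(\hat b - b_\delta - \alpha_\delta \hat w),
\]
using $A\hat x = \hat b$ from the basic source condition. Since we are now \emph{assuming} $x_\delta \in U^{\hat x}$ (rather than deriving it from \cref{lemma:linear:a-convergence}), no preliminary smallness argument is needed here, which is the whole point of stating the lemma this way.

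Next, combining the accuracy bound \eqref{eq:linear:accuracy} from \cref{ass:linear:main} with the $(A,\gamma_\delta)$-strong local subdifferentiability of $J_\delta + \alpha_\delta R$ at $\hat x$ for $J_\delta'(\hat x) - \alpha_\delta A^*\hat w$ with respect to $\hat X$, I would write, for $x_\delta \in U^{\hat x}$,
\[
    e_\delta \ge [J_\delta + \alpha_\delta R](x_\delta) - [J_\delta + \alpha_\delta R](\hat x)
    \ge \dualprod{A^*(\hat b - b_\delta - \alpha_\delta \hat w)}{x_\delta - \hat x} + \gamma\norm{A(x_\delta-\hat x)}_Y^2 + \gamma\gamma_\delta \dist^2(x_\delta, \hat X).
\]
(Here $\mu = \gamma_\delta$ is the parameter of \eqref{eq:subreg:strong-subdiff} and $\opt X = \hat X$.) Moving the dual pairing onto $Y$ as $\iprod{\hat b - b_\delta - \alpha_\delta \hat w}{A(x_\delta - \hat x)}$ and applying inner-product Young's inequality \eqref{eq:intro:young} with parameter chosen so that the $\norm{A(x_\delta-\hat x)}_Y^2$ term is absorbed by the $\gamma\norm{A(x_\delta-\hat x)}_Y^2$ term (i.e. the split $\frac{1}{4\gamma}\norm{\hat b - b_\delta - \alpha_\delta\hat w}_Y^2 + \gamma\norm{A(x_\delta-\hat x)}_Y^2$), I get
\[
    e_\delta \ge -\frac{1}{4\gamma}\norm{\hat b - b_\delta - \alpha_\delta \hat w}_Y^2 + \gamma\gamma_\delta \dist^2(x_\delta, \hat X),
\]
exactly paralleling \eqref{eq:linear:subreg:base-est}.

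Finally I would rearrange for $\dist^2(x_\delta, \hat X)$ and estimate $\norm{\hat b - b_\delta - \alpha_\delta\hat w}_Y^2 \le 2\norm{b_\delta - \hat b}_Y^2 + 2\alpha_\delta^2\norm{\hat w}_Y^2 \le 2\delta^2 + 2\alpha_\delta^2\norm{\hat w}_Y^2$ by Young's inequality and \eqref{eq:linear:noise}, yielding
\[
    \dist^2(x_\delta, \hat X) \le \frac{e_\delta}{\gamma\gamma_\delta} + \frac{1}{4\gamma^2\gamma_\delta}\norm{\hat b - b_\delta - \alpha_\delta\hat w}_Y^2 \le \frac{e_\delta}{\gamma\gamma_\delta} + \frac{\delta^2}{2\gamma^2\gamma_\delta} + \frac{\alpha_\delta^2}{2\gamma^2\gamma_\delta}\norm{\hat w}_Y^2,
\]
which is the claim. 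There is essentially no obstacle here: the lemma has been set up so that the one genuinely delicate point in \cref{thm:linear:strong-subreg}—namely verifying that $x_\delta$ lies in the neighbourhood where the growth estimate is valid—is simply hypothesized. The only mild care needed is the bookkeeping of the Young's inequality constants (using $\tfrac{1}{4\gamma}$ and $\gamma$ on the two sides so the $A$-term cancels, then $\tfrac12\gamma^{-2}\gamma_\delta^{-1}$ after splitting the noise-plus-source term), which is routine.
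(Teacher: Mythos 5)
Your proposal is correct and follows essentially the same route as the paper: it repeats the estimate \eqref{eq:linear:subreg:base-est} from \cref{thm:linear:strong-subreg} with $\gamma\gamma_\delta\dist^2(x_\delta,\hat X)$ in place of $\gamma\gamma_\delta\norm{x_\delta-\hat x}_X^2$, absorbs the $\norm{A(x_\delta-\hat x)}_Y^2$ term via Young's inequality with the $\tfrac{1}{4\gamma}$ split, and then concludes exactly as in \eqref{eq:linear:subreg:final-est}. The constant bookkeeping and the observation that $x_\delta\in U^{\hat x}$ is hypothesised rather than derived are both in line with the paper's argument.
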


\begin{proof}
    As in \eqref{eq:linear:subreg:base-est}, using the assumed $(A,\gamma_\delta)$-strong local subdifferentiability of $J_\delta+\alpha_\delta R$, we estimate
    \begin{equation*}
        \begin{aligned}[t]
        e_\delta
        &
        \ge
        [J_\delta+\alpha_\delta R](x_\delta)-[J_\delta+\alpha_\delta R](\hat x)
        \\
        &
        \ge
        \dualprod{J_\delta'(\hat x) - \alpha_\delta  A^*\hat w}{x_\delta-\hat x}
        + \gamma \norm{A(x_\delta-\hat x)}_Y^2
        + \gamma\gamma_\delta \dist^2(x_\delta, \hat X)
        \\
        &
        = \iprod{\hat b-b_\delta-\alpha_\delta \hat w}{A(x_\delta-\hat x)}
        + \gamma \norm{A(x_\delta-\hat x)}_Y^2
        + \gamma\gamma_\delta \dist^2(x_\delta, \hat X)
        \\
        &
        \ge
        -\frac{1}{4\gamma}\norm{\hat b-b_\delta-\alpha_\delta \hat w}_Y^2
        +\gamma \dist^2(x_\delta, \hat X).
        \end{aligned}
    \end{equation*}
    Now estimating as in \eqref{eq:linear:subreg:final-est} yields the claim.
\end{proof}

\begin{theorem}
    \label{thm:linear:subreg:weak}
    Suppose \cref{ass:linear:main} holds and that there exists a collection $\tilde X \subset \hat X$ of points satisfying the \emph{semi-strong} source condition of \cref{ass:linear:semi-strong-source-condition} with $\Union_{\hat x \in \tilde X} U^{\hat x} \supset \hat X$.
    Also suppose that $U_\rho$ defined in \eqref{eq:linear:strong-subreg:u-cond} is weakly or weakly-$*$ compact for some $\rho>0$, and each $U^{\hat x}$ for all $\hat x \in \tilde X$ correspondingly weakly or weakly-$*$ open.
    If
    \begin{equation}
        \label{eq:linear:subreg:weak:param-assumption}
        \lim_{\delta \downto 0} \frac{1}{\min\{\alpha_\delta, \gamma_\delta\}}(\alpha_\delta^2, \delta^2, e_\delta)=0,
    \end{equation}
    then
    \[
        \lim_{\delta \downto 0} \dist(x_\delta, \hat X)=0.
    \]
\end{theorem}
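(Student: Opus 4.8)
The plan is a weak (or weak-$*$) compactness argument built on top of \cref{lemma:linear:subreg}, using \cref{lemma:linear:a-convergence} to confine the approximate solutions. First I would unpack the parameter hypothesis: since $\min\{\alpha_\delta,\gamma_\delta\}\le\alpha_\delta$, condition \eqref{eq:linear:subreg:weak:param-assumption} gives $\alpha_\delta\to0$, $\delta^2/\alpha_\delta\to0$ and $e_\delta/\alpha_\delta\to0$ as $\delta\downto0$, hence also $\delta\to0$ and $e_\delta\to0$. Because every $\hat x\in\hat X$ has the same $R$-value and satisfies $A\hat x=\hat b$, the set $U_\rho$ of \eqref{eq:linear:strong-subreg:u-cond} does not depend on the chosen $\hat x\in\hat X$. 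Then \cref{lemma:linear:a-convergence} yields $\norm{A(x_\delta-\hat x)}_Y^2\le4(e_\delta+\delta+\alpha_\delta R(\hat x))\to0$ and $R(x_\delta)-R(\hat x)\le(e_\delta+\delta^2)/\alpha_\delta\to0$, so there is $\bar\delta_0>0$ with $x_\delta\in U_\rho$ for all $\delta\in(0,\bar\delta_0)$; this is what makes the compactness hypothesis applicable.

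Next I would argue by contradiction. If the claim fails, there are $\epsilon>0$ and $\delta_n\downto0$ with $\dist(x_{\delta_n},\hat X)\ge\epsilon$, and we may take $\delta_n<\bar\delta_0$, so $x_{\delta_n}\in U_\rho$. Weak (resp. weak-$*$) compactness of $U_\rho$ lets me pass to a subsequence (not relabelled) with $x_{\delta_n}\weakto x^*$ (resp. $\weaktostar x^*$), $x^*\in U_\rho$. I then claim $x^*\in\hat X$: continuity of $A$ into the weak topology of $Y$ gives $Ax_{\delta_n}\weakto Ax^*$, which together with $\norm{Ax_{\delta_n}-\hat b}_Y\to0$ (from the first step) forces $Ax^*=\hat b$; and $R$, being convex and lower semicontinuous, is weakly (resp. weakly-$*$) lower semicontinuous, so $R(x^*)\le\liminf_n R(x_{\delta_n})\le R(\hat x)$. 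Since $\hat x$ minimises $R$ subject to $Ax=\hat b$, this forces $R(x^*)=R(\hat x)$, i.e. $x^*\in\hat X$.

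To finish, use $x^*\in\hat X\subset\Union_{\hat x\in\tilde X}U^{\hat x}$ to pick $\hat x'\in\tilde X$ with $x^*\in U^{\hat x'}$; as $U^{\hat x'}$ is weakly (resp. weakly-$*$) open and the extracted subsequence converges to $x^*$ in that topology, $x_{\delta_n}\in U^{\hat x'}$ for all large $n$. \cref{lemma:linear:subreg} applied at $\hat x'$ (with the $\hat w$ and constant $\gamma>0$ that \cref{ass:linear:semi-strong-source-condition} attaches to $\hat x'$, which are fixed constants once $\hat x'$ is fixed) then bounds $\dist^2(x_{\delta_n},\hat X)$ by $\frac{e_{\delta_n}}{\gamma\gamma_{\delta_n}}+\frac{\delta_n^2}{2\gamma^2\gamma_{\delta_n}}+\frac{\alpha_{\delta_n}^2}{2\gamma^2\gamma_{\delta_n}}\norm{\hat w}_Y^2$, which tends to $0$ by \eqref{eq:linear:subreg:weak:param-assumption} and $\min\{\alpha_\delta,\gamma_\delta\}\le\gamma_\delta$ — contradicting $\dist(x_{\delta_n},\hat X)\ge\epsilon$.

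The only delicate point, and the place the topological hypotheses actually do work, is the identification $x^*\in\hat X$: in the weak-$*$ variant one needs $A$ to be weak-$*$-to-weak continuous and $R$ to be weak-$*$ lower semicontinuous, neither of which follows formally from norm lower semicontinuity; these have to be supplied by the concrete setting (they are automatic when $X$ is reflexive, e.g.\ a Hilbert space, as in the total variation example of \cref{sec:examples}). Everything else is bookkeeping with the parameter condition and the definitions.
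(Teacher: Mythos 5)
Your proposal is correct and follows essentially the same route as the paper's proof: a contradiction argument using \cref{lemma:linear:a-convergence} to place $x_\delta$ in $U_\rho$, weak(-$*$) compactness to extract a limit $\tilde x$, identification $\tilde x \in \hat X$ via $Ax_{\delta_k}\to\hat b$ and weak(-$*$) lower semicontinuity of $R$, then the covering by weakly(-$*$) open sets $U^{\hat x}$ and the bound of \cref{lemma:linear:subreg} to contradict $\dist(x_{\delta_k},\hat X)\ge\epsilon$. Your explicit remark on the weak-$*$ continuity of $A$ and weak-$*$ lower semicontinuity of $R$ is a fair point of care that the paper leaves implicit, but it does not change the argument.
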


\begin{proof}
    Suppose, to reach a contradiction, for a sequence $\delta_k \downto 0$ that $\inf_k \dist(x_{\delta_k}, \hat X) > 0$.
    By \cref{lemma:linear:a-convergence} and \eqref{eq:linear:subreg:weak:param-assumption}, we have $x_{\delta_k} \in U_\rho$ for large enough $k$.
    Since $U_\rho$ is weakly(-$*$) compact, we can extract a subsequence, unrelabelled, such that also $x_{\delta_k} \weakto \tilde x$ weakly(-$*$).
    Since  $A\tilde x=\hat b$ for $\hat x \in \hat X$, we have using \cref{ass:linear:main} and  \eqref{eq:linear:subreg:weak:param-assumption} that
    \[
        \begin{aligned}[t]
        \lim_{k \to \infty} \norm{A x_{\delta_k}-b_{\delta_k}}^2
        &
        \le
        \lim_{k \to \infty}\left( \norm{A x_{\delta_k}-b_{\delta_k}}^2 + \alpha_k R(x_{\delta_k}) \right)
        \\
        &
        \le
        \lim_{k \to \infty}\left( \norm{A \hat x-b_{\delta_k}}^2 + \alpha_k R(\hat x) + e_{\delta_k} \right)
        \\
        &
        =
        \lim_{k \to \infty} \norm{\hat b-b_{\delta_k}}^2
        =
        \lim_{k \to \infty} \delta_k^2
        = 0
        \quad (\hat x \in \hat X).
        \end{aligned}
    \]
    Using \eqref{eq:linear:noise} and \eqref{eq:linear:subreg:weak:param-assumption}  yields $Ax_{\delta_k} \to \hat b$. Moreover, \eqref{eq:linear:noise} and \eqref{eq:linear:accuracy} in \cref{ass:linear:main} give
    \[
        R(x_{\delta_k}) \le \frac{\delta_k^2}{\alpha_{\delta_k}} + R(\hat x).
    \]
    Since $R$ is assumed convex and lower semicontinuous, it is weakly(-$*$) lower semicontinuous; see, e.g., \cite[Corollary 2.2]{ekeland1999convex} or \cite[Lemma 1.10]{clasonvalkonen2020nonsmooth}.
    Hence, using \eqref{eq:linear:subreg:weak:param-assumption}, we obtain $R(\tilde x) \le R(\hat x)$ and consequently $\tilde x \in \hat X$.
    By assumption, $\tilde x \in U^{\hat x}$ for some $\hat x \in \tilde X$ satisfying the semi-strong source condition of \cref{ass:linear:semi-strong-source-condition}.
    By the weak(-$*$) openness of $U^{\hat x}$, it follows that $x_{\delta_k} \in U^{\hat x}$ for $k$ large enough.
    Following the proof of \cref{lemma:linear:subreg} and using \eqref{eq:linear:subreg:weak:param-assumption}, we deduce that $\dist(x_{\delta_k}, \hat X) \to 0$.
\end{proof}

The following will be useful for verifying the strong metric subregularity or $(A, \gamma_\delta)$-strong local subdifferentiability required in \cref{ass:linear:strong-source-condition,ass:linear:semi-strong-source-condition}. The proof of the first lemma follows from that of the second by taking $\hat X=\{\hat x\}$ and expanding $\norm{\freevar}_\delta$.

\begin{lemma}
    \label{lemma:linear:to-prove:strong}
    Suppose $A$ and $R$ are as in \cref{ass:linear:main}.
    Then $J_\delta+\alpha_\delta R$ is strongly locally subdifferentiable at $\hat x$ for $J_\delta'(\hat x)-\alpha_\delta \hat d$ with respect to the norm $\norm{\freevar}_\delta$ if, for corresponding neighbourhood $U \ni \hat x$ and $\gamma>0$,
    \begin{equation*}
        \alpha_\delta[R(x)-R(\hat x)-\dualprod{\hat d}{x-\hat x}]
        +\left(\frac{1}{2}-\gamma\right)\norm{A(x-\hat x)}_2^2
        \ge
        \gamma\gamma_\delta \norm{x-\hat x}^2
        \quad (x \in U).
    \end{equation*}
\end{lemma}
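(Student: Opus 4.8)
The plan is to unwind the definition of strong local subdifferentiability with respect to the norm $\norm{\freevar}_\delta$ and show it reduces exactly to the displayed inequality. Recall that $J_\delta + \alpha_\delta R$ being strongly locally subdifferentiable at $\hat x$ for $J_\delta'(\hat x) - \alpha_\delta \hat d$ with respect to $\norm{\freevar}_\delta$ means, by \eqref{eq:subreg:convex:strong} read with the norm $\norm{\freevar}_\delta$ in place of $\norm{\freevar}_X$, that there is a neighbourhood $U \ni \hat x$ and $\gamma > 0$ with
\[
    [J_\delta + \alpha_\delta R](x) \ge [J_\delta + \alpha_\delta R](\hat x) + \dualprod{J_\delta'(\hat x) - \alpha_\delta \hat d}{x - \hat x} + \gamma \norm{x - \hat x}_\delta^2
    \quad (x \in U).
\]
First I would expand the left-hand side: since $J_\delta(x) = \frac12\norm{Ax - b_\delta}_Y^2$, Pythagoras' three-point identity \eqref{eq:intro:three-point} (exactly as used in the proof of \cref{thm:linear:bregman}) gives $J_\delta(x) - J_\delta(\hat x) = \iprod{A\hat x - b_\delta}{A(x - \hat x)}_Y + \frac12\norm{A(x - \hat x)}_Y^2 = \dualprod{J_\delta'(\hat x)}{x - \hat x} + \frac12\norm{A(x-\hat x)}_Y^2$, using $J_\delta'(\hat x) = A^*(A\hat x - b_\delta)$.

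Substituting this into the inequality, the term $\dualprod{J_\delta'(\hat x)}{x - \hat x}$ appears on both sides and cancels, leaving
\[
    \tfrac12\norm{A(x - \hat x)}_Y^2 + \alpha_\delta[R(x) - R(\hat x)] \ge -\alpha_\delta\dualprod{\hat d}{x - \hat x} + \gamma\norm{x - \hat x}_\delta^2
    \quad (x \in U).
\]
Now I would substitute the definition $\norm{x - \hat x}_\delta^2 = \norm{A(x - \hat x)}_Y^2 + \gamma_\delta\norm{x - \hat x}_X^2$ on the right, move the $\gamma\norm{A(x-\hat x)}_Y^2$ piece to the left, and collect the $\norm{A(x-\hat x)}_Y^2$ coefficients into $(\tfrac12 - \gamma)$, and move $-\alpha_\delta\dualprod{\hat d}{x-\hat x}$ to the left. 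This yields precisely
\[
    \alpha_\delta[R(x) - R(\hat x) - \dualprod{\hat d}{x - \hat x}] + \left(\tfrac12 - \gamma\right)\norm{A(x - \hat x)}_Y^2 \ge \gamma\gamma_\delta\norm{x - \hat x}_X^2
    \quad (x \in U),
\]
which is the claimed condition (noting the statement writes $\norm{\freevar}_2$ for $\norm{\freevar}_Y$ and $\norm{\freevar}$ for $\norm{\freevar}_X$). Thus the displayed inequality is not merely sufficient but in fact equivalent to strong local subdifferentiability with respect to $\norm{\freevar}_\delta$; the lemma states only the "if" direction, which is all that is needed.

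There is essentially no obstacle here — the argument is a direct algebraic rearrangement, the only mild subtlety being to keep careful track of which norm ($Y$ versus $X$) each squared-norm term refers to and to apply the three-point identity correctly to split $J_\delta$. I would present it concisely: expand $J_\delta$ via \eqref{eq:intro:three-point}, cancel the linear term $\dualprod{J_\delta'(\hat x)}{\freevar}$, expand $\norm{\freevar}_\delta^2$, and regroup. One could optionally remark that, as the proof shows, the converse implication also holds, so "if" can be replaced by "if and only if"; but since only one direction is used downstream in \cref{ass:linear:strong-source-condition,ass:linear:semi-strong-source-condition}, it suffices to prove that.
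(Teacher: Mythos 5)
Your route is the same one the paper takes: the paper proves the companion result \cref{lemma:linear:to-prove:semistrong} by expanding the definition \eqref{eq:subreg:strong-subdiff} and rearranging with the Hilbert-space (Pythagoras) identity, and obtains the present lemma by taking $\hat X=\{\hat x\}$ and expanding $\norm{\freevar}_\delta$; your direct expansion, cancellation of $\dualprod{J_\delta'(\hat x)}{x-\hat x}$, and regrouping is exactly that computation, and your remark that the implication is actually an equivalence is also right in substance.

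There is, however, a sign slip in your last step that you should not gloss over. From the intermediate inequality
$\tfrac12\norm{A(x-\hat x)}_Y^2+\alpha_\delta[R(x)-R(\hat x)] \ge -\alpha_\delta\dualprod{\hat d}{x-\hat x}+\gamma\norm{x-\hat x}_\delta^2$,
moving $-\alpha_\delta\dualprod{\hat d}{x-\hat x}$ to the left turns it into $+\alpha_\delta\dualprod{\hat d}{x-\hat x}$ there; so the condition equivalent to strong local subdifferentiability at $\hat x$ for the element $J_\delta'(\hat x)-\alpha_\delta\hat d$, taken literally, has $R(x)-R(\hat x)+\dualprod{\hat d}{x-\hat x}$ in the bracket, not the minus sign of the displayed condition. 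The displayed condition with $-\dualprod{\hat d}{x-\hat x}$ corresponds instead to the element $J_\delta'(\hat x)+\alpha_\delta\hat d$ --- which is precisely how the paper's own proof of \cref{lemma:linear:to-prove:semistrong} expands the definition (the term $\dualprod{A^*(A\hat x-b_\delta)+\alpha_\delta\hat d}{x-\hat x}$ appears there) and how the lemma is invoked later with $\hat d\in\subdiff R(\hat x)$ (the $\ell^1$ and total variation examples use $\hat x^*=A^*(A\hat x-b_\delta)+\alpha_\delta\hat d$). In other words, the statement contains a sign inconsistency (either the reference element should read $J_\delta'(\hat x)+\alpha_\delta\hat d$, or the bracket should carry $+\dualprod{\hat d}{x-\hat x}$), and your ``move the term across'' step silently reproduces it rather than deriving it; a correct write-up should either fix the sign in the claimed inequality or note that $\hat d$ must be replaced by $-\hat d$ (equivalently, that $\hat d$ plays the role of $A^*\hat w$ with $-A^*\hat w\in\subdiff R(\hat x)$) so that the algebra and the downstream applications match.
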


\begin{lemma}
    \label{lemma:linear:to-prove:semistrong}
    Suppose $A$ and $R$ are as in \cref{ass:linear:main}.
    Then $J_\delta+\alpha_\delta R$ is $(A, \gamma_\delta)$-strongly locally subdifferentiable at $\hat x$ for $J_\delta'(\hat x)-\alpha_\delta \hat d$ with respect to $\hat X$ if, for corresponding neighbourhood $U \ni \hat x$ and $\gamma>0$,
    \begin{equation}
        \label{eq:linear:to-prove}
        \alpha_\delta[R(x)-R(\hat x)-\dualprod{\hat d}{x-\hat x}]
        +\left(\frac{1}{2}-\gamma\right)\norm{A(x-\hat x)}_2^2
        \ge
        \gamma\gamma_\delta \dist^2(x, \hat X)
        \quad (x \in U).
    \end{equation}
\end{lemma}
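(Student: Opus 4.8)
The plan is to exploit that the data term $J_\delta$ is a Hilbert-space quadratic, so its second-order expansion at $\hat x$ is an \emph{exact identity} rather than an inequality; once this is recorded, the asserted $(A,\gamma_\delta)$-strong local subdifferentiability of $J_\delta+\alpha_\delta R$ is nothing but a term-by-term rearrangement of the hypothesis \eqref{eq:linear:to-prove}. Accordingly I do not expect a substantive obstacle here — the proof is bookkeeping; the only points requiring care are that the quadratic expansion carries no higher-order remainder, and that the constants $\gamma$, $\gamma_\delta$, $\tfrac12-\gamma$ and the sign of the pairing with $\hat d$ (inherited from the basic source condition \eqref{eq:linear:basic-source-condition} and \cref{ass:linear:semi-strong-source-condition}) are tracked correctly.

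First I would record the exact expansion of $J_\delta$. Since $J_\delta(x)=\tfrac12\norm{Ax-b_\delta}_Y^2$ and $J_\delta'(\hat x)=A^*(A\hat x-b_\delta)$, expanding $\norm{(A\hat x-b_\delta)+A(x-\hat x)}_Y^2$ by bilinearity of the inner product on $Y$ gives
\[
    J_\delta(x)=J_\delta(\hat x)+\dualprod{J_\delta'(\hat x)}{x-\hat x}+\tfrac12\norm{A(x-\hat x)}_Y^2 \qquad (x\in X),
\]
with no remainder term. This identity is precisely where the quadratic/Hilbert structure of $J_\delta$ is used, and is why $J_\delta$ is treated separately from the general convex $R$.

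Next I would unfold the definition \eqref{eq:subreg:strong-subdiff}. Writing $F\defeq J_\delta+\alpha_\delta R$ and taking $\hat x$ in the role of $\optx$, the functional $J_\delta'(\hat x)-\alpha_\delta\hat d$ in the role of $\optx^*$, the number $\gamma_\delta$ in the role of $\mu$, and the set $\hat X$ in the role of $\opt X$, the asserted strong local subdifferentiability on a neighbourhood $U\ni\hat x$ with modulus $\gamma>0$ reads
\[
    F(x)\ge F(\hat x)+\dualprod{J_\delta'(\hat x)-\alpha_\delta\hat d}{x-\hat x}+\gamma\norm{A(x-\hat x)}_Y^2+\gamma\gamma_\delta\dist^2(x,\hat X) \qquad (x\in U).
\]
Subtracting $F(\hat x)+\dualprod{J_\delta'(\hat x)}{x-\hat x}$ from both sides and inserting the identity of the previous step cancels the $\dualprod{J_\delta'(\hat x)}{x-\hat x}$ contributions and turns the left-hand side into $\tfrac12\norm{A(x-\hat x)}_Y^2+\alpha_\delta\bigl[R(x)-R(\hat x)-\dualprod{\hat d}{x-\hat x}\bigr]$; then splitting $\tfrac12=\gamma+(\tfrac12-\gamma)$ and returning $\gamma\norm{A(x-\hat x)}_Y^2$ to the right-hand side shows the inequality to be \emph{equivalent} to
\[
    \alpha_\delta\bigl[R(x)-R(\hat x)-\dualprod{\hat d}{x-\hat x}\bigr]+\bigl(\tfrac12-\gamma\bigr)\norm{A(x-\hat x)}_Y^2\ge\gamma\gamma_\delta\dist^2(x,\hat X) \qquad (x\in U),
\]
which is precisely \eqref{eq:linear:to-prove}, on the same $U$ with the same $\gamma$. (Run backwards, the same computation gives the converse: add $\gamma\norm{A(x-\hat x)}_Y^2$ to both sides of \eqref{eq:linear:to-prove} and recognise the resulting left-hand side, via the quadratic identity, as $F(x)-F(\hat x)-\dualprod{J_\delta'(\hat x)-\alpha_\delta\hat d}{x-\hat x}$.) Finally, as the paper notes, \cref{lemma:linear:to-prove:strong} is the special case $\hat X=\{\hat x\}$: then $\dist^2(x,\hat X)=\norm{x-\hat x}_X^2$ and $\gamma\norm{A(x-\hat x)}_Y^2+\gamma\gamma_\delta\norm{x-\hat x}_X^2=\gamma\norm{x-\hat x}_\delta^2$, so $(A,\gamma_\delta)$-strong local subdifferentiability with respect to $\{\hat x\}$ coincides with strong local subdifferentiability with respect to the norm $\norm{\freevar}_\delta$, and the displayed sufficient condition there is exactly the one above with $\dist^2(x,\hat X)$ replaced by $\norm{x-\hat x}_X^2$.
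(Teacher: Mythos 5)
Your proof is correct and is essentially the paper's own argument: unfold the definition \eqref{eq:subreg:strong-subdiff}, use the exact quadratic (Pythagoras/Hilbert-norm) expansion of $J_\delta$ at $\hat x$, and rearrange term by term to \eqref{eq:linear:to-prove}, with \cref{lemma:linear:to-prove:strong} recovered as the special case $\hat X=\{\hat x\}$, exactly as the paper does. One bookkeeping caveat: if the comparison functional is taken literally as $J_\delta'(\hat x)-\alpha_\delta\hat d$, the cancellation produces $+\dualprod{\hat d}{x-\hat x}$ rather than $-\dualprod{\hat d}{x-\hat x}$, and the stated form \eqref{eq:linear:to-prove} corresponds to the reading $J_\delta'(\hat x)+\alpha_\delta\hat d$ with $\hat d\in\subdiff R(\hat x)$ (a sign inconsistency already present between the paper's statement and its own proof, which is how the lemma is later applied), so your computation matches the paper's intended result.
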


\begin{proof}
    Expanding \eqref{eq:subreg:strong-subdiff}, we need to prove
    \begin{multline*}
        \frac{1}{2}\norm{Ax-b_\delta}_2^2+\alpha_\delta R(x)
        -\frac{1}{2}\norm{A\hat x-b_\delta}_2^2-\alpha_\delta R(\hat x)
        \\
        \ge
        \dualprod{A^*(A\hat x-b_\delta)+\alpha_\delta\hat d}{x-\hat x} + \gamma \norm{A(x-\hat x)}^2 + \gamma \gamma_\delta \dist^2(x, \hat X)
        \quad (x \in U).
    \end{multline*}
    Using the properties of the Hilbert space norm (Pythagoras' identity), this rearranges as \eqref{eq:linear:to-prove}.
\end{proof}

\section{Examples}
\label{sec:examples}

We now look at a few examples that demonstrate \cref{thm:linear:strong-subreg,cor:linear:strong-subreg}.

\subsection{Basic examples}

Taking $R(x)=\frac{1}{2}\norm{x}_X^2$ and $e_\delta=0$, we recover from \cref{cor:linear:strong-subreg} classical results on Tikhonov regularisation \cite{engl2000regularization}:

\begin{theorem}[Norm-squared regularisation]
    \label{ex:linear:tikhonov}
    Let $X$ and $Y$ be Hilbert spaces, $A \in \linear(X; Y)$.
    Suppose $A \hat x=\hat b$ and $\hat x \in \range A^*$, i.e. $\hat x=-A^*\hat w$ for some $\hat w$.
    For all $\delta>0$, let $x_\delta \in X$ solve
    \[
        (A^*A + \alpha_\delta)x_\delta + A^*b_\delta = 0.
    \]
    Then
    $
        \norm{x_\delta-\hat x}_X^2
        \le
        \frac{\delta^2}{2\alpha_\delta}
        +\frac{\alpha_\delta}{2}\norm{\hat w}_Y^2.
    $
    In particular $\norm{x_\delta-\hat x} \downto 0$ as $\delta \downto 0$ provided we choose the regularisation parameter $\alpha_\delta \downto 0$ such that $\delta^2/\alpha_\delta \downto 0$.
 \end{theorem}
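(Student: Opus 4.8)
The plan is to obtain the theorem as the degenerate special case $R=\tfrac12\norm{\freevar}_X^2$, $e_\delta=0$ of the strong source condition theory, reading the asserted norm convergence off \cref{cor:linear:strong-subreg} and the sharp bound off a short direct computation. First I would check \cref{ass:linear:main}: $R=\tfrac12\norm{\freevar}_X^2$ is convex, proper and lower semicontinuous, $A\in\linear(X;Y)$, and the noise condition \eqref{eq:linear:noise} is exactly the stated $\norm{b_\delta-\hat b}_Y\le\delta$. Since $x_\delta$ solves $A^*(Ax_\delta-b_\delta)+\alpha_\delta x_\delta=0$, the first-order optimality condition (the displayed equation) for the strongly convex, coercive objective in \eqref{eq:linear:problem:regularised}, it is that objective's unique minimiser, so \eqref{eq:linear:accuracy} holds with $e_\delta=0$. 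Finally, as $\subdiff R(\hat x)=\set{\hat x}$, the basic source condition \eqref{eq:linear:basic-source-condition} reads $A\hat x=\hat b$ together with $\hat x+A^*\hat w=0$, i.e.\ precisely $\hat x=-A^*\hat w\in\range A^*$; in particular $\hat X=\set{\hat x}$ is the minimum-norm solution.

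Next I would verify the \emph{strong} source condition of \cref{ass:linear:strong-source-condition} with $\gamma=\tfrac12$ and $\gamma_\delta=\alpha_\delta$, so that $\norm{x}_\delta^2=\norm{Ax}_Y^2+\alpha_\delta\norm{x}_X^2$. Expanding the two squared norms (Pythagoras' identity \eqref{eq:intro:three-point}) and inserting $\hat x=-A^*\hat w$ together with $J_\delta'(\hat x)=A^*(\hat b-b_\delta)$ gives the \emph{exact} identity
\[
    [J_\delta+\alpha_\delta R](x)-[J_\delta+\alpha_\delta R](\hat x)
    =\dualprod{J_\delta'(\hat x)-\alpha_\delta A^*\hat w}{x-\hat x}
    +\tfrac12\norm{A(x-\hat x)}_Y^2+\tfrac{\alpha_\delta}{2}\norm{x-\hat x}_X^2
    \quad(x\in X),
\]
whose quadratic remainder equals $\tfrac12\norm{x-\hat x}_\delta^2$. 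Thus the growth condition demanded by \cref{ass:linear:strong-source-condition} holds globally, i.e.\ with $U=X$, so that the neighbourhood requirement \eqref{eq:linear:strong-subreg:u-cond} is automatic and $\gamma=\tfrac12$ is independent of $\delta$. As \eqref{eq:linear:strong-subreg:convergence-cond} now reduces to $\alpha_\delta^{-1}(\alpha_\delta^2,\delta^2,0)\to0$, i.e.\ to $\alpha_\delta\downto0$ and $\delta^2/\alpha_\delta\downto0$, \cref{cor:linear:strong-subreg} gives $\norm{x_\delta-\hat x}_X\to0$, which is the final assertion; and \cref{thm:linear:strong-subreg} already supplies an estimate of the stated shape, albeit with a larger absolute constant because of its generic Young's inequalities.

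For the precise constants in the displayed bound I would argue directly. Subtracting $A^*A\hat x+\alpha_\delta\hat x=A^*(\hat b-\alpha_\delta\hat w)$, which uses $A\hat x=\hat b$ and $\hat x=-A^*\hat w$, from the optimality equation gives $(A^*A+\alpha_\delta)(x_\delta-\hat x)=A^*(b_\delta-\hat b+\alpha_\delta\hat w)$. Taking the inner product with $x_\delta-\hat x$, using the definition of the adjoint, and applying Young's inequality \eqref{eq:intro:young} with its free parameter set to $2$,
\[
    \norm{A(x_\delta-\hat x)}_Y^2+\alpha_\delta\norm{x_\delta-\hat x}_X^2
    =\iprod{b_\delta-\hat b+\alpha_\delta\hat w}{A(x_\delta-\hat x)}
    \le\norm{A(x_\delta-\hat x)}_Y^2+\tfrac14\norm{b_\delta-\hat b+\alpha_\delta\hat w}_Y^2 .
\]
Cancelling $\norm{A(x_\delta-\hat x)}_Y^2$, dividing by $\alpha_\delta$, and estimating $\norm{b_\delta-\hat b+\alpha_\delta\hat w}_Y^2\le2\delta^2+2\alpha_\delta^2\norm{\hat w}_Y^2$ via \eqref{eq:linear:noise} yields $\norm{x_\delta-\hat x}_X^2\le\tfrac{\delta^2}{2\alpha_\delta}+\tfrac{\alpha_\delta}{2}\norm{\hat w}_Y^2$, and letting $\delta\downto0$ with $\delta^2/\alpha_\delta\downto0$ gives norm convergence. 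I do not expect a genuine obstacle: the structural hypotheses (convexity, the triviality of the neighbourhood $U$ and of the $\delta$-uniformity of $\gamma$) are immediate for the quadratic regulariser, and the only real care needed is the constant bookkeeping, which is why the short direct computation is preferable to a black-box appeal to \cref{thm:linear:strong-subreg}.
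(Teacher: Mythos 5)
Your proposal is correct, and its structural half is exactly the paper's route: verify \cref{ass:linear:main} with $e_\delta=0$ (the displayed equation, read with the intended sign $-A^*b_\delta$, is the normal equation of \eqref{eq:linear:problem:regularised}, so $x_\delta$ is the exact minimiser), note $\subdiff R(\hat x)=\{\hat x\}$ so that $\hat x=-A^*\hat w$ is the basic source condition, observe that the quadratic structure gives the strong source condition of \cref{ass:linear:strong-source-condition} with $U=X$ and $\gamma_\delta=\alpha_\delta$, and conclude via \cref{thm:linear:strong-subreg,cor:linear:strong-subreg}. Where you genuinely differ is in the constants, and your version is the more careful one: the exact second-order identity gives the growth factor $\gamma=\tfrac12$ (the remainder is $\tfrac12\norm{x-\hat x}_\delta^2$), whereas the paper's proof asserts $\gamma=1$, which is what is formally needed to make \eqref{eq:linear:strong-subreg:estimate} reproduce the stated bound $\tfrac{\delta^2}{2\alpha_\delta}+\tfrac{\alpha_\delta}{2}\norm{\hat w}_Y^2$ but overstates the available growth by a factor of two; with the honest $\gamma=\tfrac12$, the generic theorem only yields the bound with a four-times larger constant, exactly as you anticipated. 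Your supplementary direct computation from $(A^*A+\alpha_\delta)(x_\delta-\hat x)=A^*(b_\delta-\hat b+\alpha_\delta\hat w)$, absorbing the full $\norm{A(x_\delta-\hat x)}_Y^2$ before dividing by $\alpha_\delta$, recovers the stated constants exactly, so it is not merely cosmetic: it is what actually justifies the displayed estimate (the qualitative convergence claim follows either way from \cref{cor:linear:strong-subreg}). In short, the proof is correct, matches the paper's framework argument, and quietly repairs a small constant slip in the paper's own proof.
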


\begin{proof}
    Since $\subdiff R(x)=\{x\}$, the basic source condition \eqref{eq:linear:basic-source-condition} holds due to $\hat x \in \range A^*$.
    Since $R$ is strongly convex with parameter $\gamma=1$, the strong strong local subdifferentiability required by \cref{ass:linear:strong-source-condition} holds with $\gamma_\delta=\alpha_\delta$, $\gamma=1$, and $U=X$.
    Consequently, taking accuracy $e_\delta \equiv 0$, \cref{thm:linear:strong-subreg,cor:linear:strong-subreg} yield the claims.
\end{proof}

We can also add constraints. Denoting by $N_{[0, \infty)^\Omega}(\hat x)$ the normal cone to $[0, \infty)^\Omega$ in $L^2(\Omega)$, in the next result we take $R(x)=\frac{1}{2}\norm{x}_X^2 + \delta_{[0, \infty)^\Omega}(x)$ in $X=L^2(\Omega)$.

\begin{theorem}[Norm-squared regularisation with non-negativity constraints]
    Let $X=L^2(\Omega)$ for some $\Omega \subset \R^d$, and let $Y$ be a Hilbert space, $A \in \linear(X; Y)$.
    Suppose $A \hat x=\hat b$ and $\hat x$ satisfies the source condition $\hat x \in \range A^* - N_{[0, \infty)^\Omega}(\hat x)$, i.e. $\hat x \in - A^*\hat w - N_{[0, \infty)^\Omega}(\hat x)$ for some $\hat w$.
    For all $\delta>0$, let
    \[
        x_\delta \in \argmin_{0 \le x \in L^2(\Omega)} \frac{1}{2}\norm{Ax-b_\delta}_Y^2 + \frac{\alpha_\delta}{2}\norm{x}_X^2.
    \]
    Then
    $
        \norm{x_\delta-\hat x}_X^2
        \le
        \frac{\delta^2}{2\alpha_\delta}
        +\frac{\alpha_\delta}{2}\norm{\hat w}_Y^2
    $
    In particular $\norm{x_\delta-\hat x} \downto 0$ as $\delta \downto 0$ provided we choose the regularisation parameter $\alpha_\delta \downto 0$ such that $\delta^2/\alpha_\delta \downto 0$.
\end{theorem}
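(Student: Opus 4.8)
The plan is to deduce this from \cref{cor:linear:strong-subreg}, with the explicit rate coming from \cref{thm:linear:strong-subreg}, in exactly the same way as \cref{ex:linear:tikhonov}: adding the indicator $\delta_{[0,\infty)^\Omega}$ to the norm-squared regulariser only changes the form of the basic source condition and the verification of strong local subdifferentiability, and I would check that neither is spoiled. First, $R(x)=\frac{1}{2}\norm{x}_X^2+\delta_{[0,\infty)^\Omega}(x)$ is convex, proper and lower semicontinuous on $X=L^2(\Omega)$, so \cref{ass:linear:main} applies; taking the accuracy $e_\delta\equiv0$, the minimiser $x_\delta$ (which exists and is unique by strong convexity and coercivity of the objective) satisfies \eqref{eq:linear:accuracy}. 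Since $\frac{1}{2}\norm{\freevar}_X^2$ is finite and continuous everywhere, the subdifferential sum rule applies without a qualification condition and yields $\subdiff R(\hat x)=\hat x+N_{[0,\infty)^\Omega}(\hat x)$; in particular $N_{[0,\infty)^\Omega}(\hat x)\neq\emptyset$, so $\hat x\in[0,\infty)^\Omega$. Consequently the basic source condition \eqref{eq:linear:basic-source-condition}, $0\in A^*\hat w+\subdiff R(\hat x)$, is equivalent to $-A^*\hat w-\hat x\in N_{[0,\infty)^\Omega}(\hat x)$, i.e.\ to the assumed $\hat x\in\range A^*-N_{[0,\infty)^\Omega}(\hat x)$; and since under \cref{ass:linear:main} the basic source condition characterises $\hat X$, we also obtain $\hat x\in\hat X$.

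For the strong source condition of \cref{ass:linear:strong-source-condition} I would verify the defining inequality \eqref{eq:subreg:strong-subdiff} for $F=J_\delta+\alpha_\delta R$ at $\hat x$ directly, as in \cref{lemma:linear:to-prove:strong}. Write $\hat d\defeq-A^*\hat w\in\subdiff R(\hat x)$ and decompose $\hat d=\hat x+\hat n$ with $\hat n\in N_{[0,\infty)^\Omega}(\hat x)$. For feasible $x\in[0,\infty)^\Omega$ the Bregman divergence of $R$ at $\hat x$ is
\[
    R(x)-R(\hat x)-\dualprod{\hat d}{x-\hat x}
    =\frac{1}{2}\norm{x-\hat x}_X^2-\iprod{\hat n}{x-\hat x}
    \ge\frac{1}{2}\norm{x-\hat x}_X^2,
\]
where the inequality is the normal-cone inequality $\iprod{\hat n}{x-\hat x}\le0$ (and the left-hand side is $+\infty$ for infeasible $x$). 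Combined with the Pythagoras identity $J_\delta(x)-J_\delta(\hat x)-\dualprod{J_\delta'(\hat x)}{x-\hat x}=\frac{1}{2}\norm{A(x-\hat x)}_Y^2$, this gives
\begin{multline*}
    [J_\delta+\alpha_\delta R](x)-[J_\delta+\alpha_\delta R](\hat x)-\dualprod{J_\delta'(\hat x)-\alpha_\delta A^*\hat w}{x-\hat x}
    \\
    \ge\frac{1}{2}\norm{A(x-\hat x)}_Y^2+\frac{\alpha_\delta}{2}\norm{x-\hat x}_X^2,
\end{multline*}
so \eqref{eq:subreg:strong-subdiff} holds with respect to the norm $\norm{\freevar}_\delta$ with $\gamma=\frac{1}{2}$, $\gamma_\delta=\alpha_\delta$ and $U=X$ — all independent of $\delta$, and \eqref{eq:linear:strong-subreg:u-cond} then trivially satisfied. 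Thus the strong source condition holds (equivalently, $\subdiff[J_\delta+\alpha_\delta R]$ is strongly metrically subregular at $\hat x$ for $A^*(\hat b-b_\delta-\alpha_\delta\hat w)$).

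It then remains to apply \cref{thm:linear:strong-subreg,cor:linear:strong-subreg} with $e_\delta\equiv0$ and $\gamma_\delta=\alpha_\delta$, exactly as in the proof of \cref{ex:linear:tikhonov}: condition \eqref{eq:linear:strong-subreg:convergence-cond} reduces to $\alpha_\delta\downto0$ with $\delta^2/\alpha_\delta\downto0$, and \eqref{eq:linear:strong-subreg:estimate} gives the claimed estimate and norm convergence. I do not expect any real obstacle; the only steps requiring care are the unqualified sum rule for $\subdiff R$ and the sign of $\iprod{\hat n}{x-\hat x}$ on the feasible set — the latter being precisely what lets the non-negativity constraint contribute a nonnegative term to the second-order growth rather than destroying the subregularity.
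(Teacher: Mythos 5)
Your route is essentially the paper's own: check the basic source condition via $\subdiff R(\hat x)=\hat x+N_{[0,\infty)^\Omega}(\hat x)$, verify the strong source condition of \cref{ass:linear:strong-source-condition} with $U=X$ and $\gamma_\delta=\alpha_\delta$ from the strong convexity of the quadratic together with the normal-cone inequality $\iprod{\hat n}{x-\hat x}\le 0$ on the feasible set, and then apply \cref{thm:linear:strong-subreg,cor:linear:strong-subreg} with $e_\delta\equiv 0$; in fact you verify both ingredients more carefully than the paper does (the paper simply states the subdifferential formula and invokes strong convexity).

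The one point where your write-up does not deliver the statement as written is the constant. Your computation correctly yields $\gamma=\tfrac12$ with respect to $\norm{\freevar}_\delta$, but then plugging $\gamma=\tfrac12$, $\gamma_\delta=\alpha_\delta$, $e_\delta=0$ into \eqref{eq:linear:strong-subreg:estimate} gives $\norm{x_\delta-\hat x}_X^2\le \tfrac{2\delta^2}{\alpha_\delta}+2\alpha_\delta\norm{\hat w}_Y^2$, which is the right order but four times the stated bound $\tfrac{\delta^2}{2\alpha_\delta}+\tfrac{\alpha_\delta}{2}\norm{\hat w}_Y^2$; so the final sentence ``\eqref{eq:linear:strong-subreg:estimate} gives the claimed estimate'' is not literally true with your $\gamma$. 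The paper's proof (as in \cref{ex:linear:tikhonov}) instead books $\gamma=1$, reading off the strong-convexity modulus of $R$, which reproduces the stated constant through \eqref{eq:linear:strong-subreg:estimate}. If you want the exact constant honestly, the cleanest fix is a direct argument using that $x_\delta$ is an exact minimiser: test the variational inequality $\iprod{A^*(Ax_\delta-b_\delta)+\alpha_\delta x_\delta}{\hat x-x_\delta}\ge 0$ with $\hat x=-A^*\hat w-\hat n$, use $\iprod{\hat n}{x_\delta-\hat x}\le 0$, and absorb $\norm{A(x_\delta-\hat x)}_Y^2$ by Young's inequality with constant $\tfrac14$; this gives $\alpha_\delta\norm{x_\delta-\hat x}_X^2\le\tfrac14\norm{\hat b-b_\delta-\alpha_\delta\hat w}_Y^2$ and hence the stated bound. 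For the qualitative convergence claim (and an estimate of the same order), your argument is complete and matches the paper's.
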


\begin{proof}
    Since $\subdiff R(x)=x+N_{[0, \infty)^\Omega}(\hat x)$, the basic source condition \eqref{eq:linear:basic-source-condition} holds due to $\hat x \in - A^*\hat w - N_{[0, \infty)^\Omega}(\hat x)$.
    Since $R$ is strongly convex with parameter $\gamma=1$, the strong strong local subdifferentiability required by \cref{ass:linear:strong-source-condition} holds with $\gamma_\delta=\alpha_\delta$, $\gamma=1$, and $U=X$.
    Consequently, taking accuracy $e_\delta \equiv 0$, \cref{thm:linear:strong-subreg,cor:linear:strong-subreg} yield the claims.
\end{proof}

We next look at nonsmooth regularisation, first in finite dimensions and then in infinite dimensions.

\subsection{$\ell^1$-regularised regression}

We now take $R(x)=\norm{x}_1$ in $\R^n$ with $A \in \R^{m \times n}$. The basic source condition \eqref{eq:linear:basic-source-condition} then holds if there exist $\hat x, \hat d \in \R^n$ such that
\begin{equation}
    \label{eq:lasso:basic-source-condition}
    A\hat x=\hat b
    \quad\text{and}\quad
    \hat d \in \range A^* \isect \Sign \hat x,
\end{equation}
where we recall for $x \in \R^n$, with $\Pi$ denoting a cartesian product of sets, that
\[
    \subdiff \norm{\freevar}_1(x)
    =
    \Sign x
    \defeq
    \prod_{k=1}^n \begin{cases}
        \{-1\}, & x_k<0, \\
        \{1\}, & x_k>0, \\
        [-1, 1], & x_k=0.
    \end{cases}
\]
We also write $\sign x$ for the (scalar) sign of $x \ne 0$.
As we recall, \eqref{eq:lasso:basic-source-condition} just means that $\hat x$ solves $\min_{Ax=\hat b} \norm{x}_1$. Then $\hat X$ is the set of these minimisers.
Otherwise said, \eqref{eq:lasso:basic-source-condition} holds if $\hat x$ is an admissible ground-truth ($A\hat x=\hat b$) and $\range A^* \isect \Sign \hat x \ne \emptyset$.

With $\hat d \in \Sign \hat x$, we write
\[
    Z(\hat x, \hat d) \defeq \{k \in \{1,\ldots,n\} \mid \hat x_k=0,\, \abs{\hat d_k} < 1\}
\]
and call $(\hat x, \hat d)$ \term{strictly complementary} if $Z(\hat x, \hat d)=\{k \in \{1,\ldots,n\} \mid \hat x_k=0\}$.
That is, $(\hat x, \hat d)$ are strictly complementary if $\hat x_k=0$ implies that $\hat d_k$ is not on the boundary of $[-1, 1]$.

The next result is extracted from \cite[proof of Theorem 2]{zhang2015necessary}.

\begin{lemma}
    \label{lemma:lasso:construction}
    Suppose $\hat x$ and $\hat d$ satisfy the source condition \eqref{eq:lasso:basic-source-condition}.
    Then there exists $\opt d \in \R^n$ such that $(\optx, \optd)$ is strictly complementary and satisfies \eqref{eq:lasso:basic-source-condition}.
\end{lemma}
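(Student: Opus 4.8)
The plan is to recognise $\min\{\norm{x}_1 \mid Ax = \hat b\}$ as a linear program and to apply the Goldman--Tucker theorem on the existence of strictly complementary optimal solutions; the construction delivers, together with the certificate $\opt d$, an $R$-minimiser $\optx \in \hat X$ (possibly different from the given $\hat x$) for which $(\optx, \opt d)$ is strictly complementary and satisfies \eqref{eq:lasso:basic-source-condition}.

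First I would write $x = u - v$ with $u, v \ge 0$ and reformulate the problem as the standard-form LP of minimising $\mathbf 1^\top(u + v)$ subject to $A(u-v) = \hat b$ and $(u, v) \ge 0$. The source condition \eqref{eq:lasso:basic-source-condition} makes this LP feasible, and its objective is bounded below by $0$, so both the LP and its dual attain their optima. The dual-feasible multipliers $y$ are exactly those with $d := A^\top y$ satisfying $\norm{d}_\infty \le 1$, and for any optimal $(u,v)$ with $x = u - v$, complementary slackness forces $d_k = \sign x_k$ wherever $x_k \ne 0$; thus the dual-optimal $d$'s are precisely the elements of $\range A^* \isect \Sign x$, which is the content of \eqref{eq:lasso:basic-source-condition} for that $x$.

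Next I would invoke Goldman--Tucker to obtain a strictly complementary pair: an optimal $(\opt u, \opt v)$ and an optimal $\opt y$ such that for every $k$ exactly one of $\opt u_k$ and $(\mathbf 1 - A^\top \opt y)_k$ is positive, and exactly one of $\opt v_k$ and $(\mathbf 1 + A^\top \opt y)_k$ is positive. Whenever $\opt u_k$ and $\opt v_k$ are both positive I would subtract $\min\{\opt u_k, \opt v_k\}$ from each, which keeps the pair optimal and strictly complementary and enforces $\opt u_k \opt v_k = 0$; then $\optx := \opt u - \opt v \in \hat X$ and $\opt d := A^* \opt y \in \range A^*$ with $\norm{\opt d}_\infty \le 1$.

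Finally I would read off the claim: if $\optx_k = 0$ then $\opt u_k = \opt v_k = 0$, so strict complementarity gives $(\mathbf 1 - \opt d)_k > 0$ and $(\mathbf 1 + \opt d)_k > 0$, i.e. $\abs{\opt d_k} < 1$; and if $\optx_k \ne 0$, complementary slackness gives $\opt d_k = \sign \optx_k$. Hence $\opt d \in \Sign \optx$ with $\abs{\opt d_k} < 1$ at every zero of $\optx$, which is exactly strict complementarity of $(\optx, \opt d)$, while $A\optx = \hat b$ and $\opt d \in \range A^* \isect \Sign \optx$ give \eqref{eq:lasso:basic-source-condition}. The main obstacle is the bookkeeping of the $u/v$-split and translating the Goldman--Tucker index partition into the three statements $\abs{\opt d_k} < 1$, $\opt d_k = \sign \optx_k$, and $\optx \in \hat X$; an alternative that avoids quoting Goldman--Tucker is a direct averaging argument — convexity of $\hat X$ yields an $\optx$ with inclusion-maximal support, and for each remaining zero index one builds, via LP duality (a certificate with $\abs{d_k}=1$ would force an optimal $x$ with $x_k \ne 0$), a certificate with $\abs{d_k} < 1$, and averaging these finitely many certificates together with the given $\hat d$ produces $\opt d$.
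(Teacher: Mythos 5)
Your route is genuinely different from the paper's. The paper keeps the given $\hat x$ fixed throughout: it perturbs the problem by the vector $r$ supported on the "bad" zero indices, applies Fenchel--Rockafellar duality to the perturbed problem $\min_x \alpha\iprod{r}{x}+\delta_{t\B_1}(x)+\delta_{\{\hat b\}}(Ax)$, and in a two-case analysis converts the resulting dual variable into a new certificate $\opt d$ with $(\hat x,\opt d)$ strictly complementary. You instead pass to the standard-form LP and quote Goldman--Tucker, which yields a strictly complementary optimal \emph{pair} $(\optx,\opt d)$ in which, as you yourself note, $\optx$ may differ from the given $\hat x$. That difference is not cosmetic: the paper's proof ends in both of its cases with "$(\hat x,\opt d)$ is strictly complementary", and it is this stronger form that \cref{thm:lasso:main} uses --- it needs, for \emph{every} $\hat x\in\hat X$, a strictly complementary certificate at that $\hat x$, so that \cref{lemma:lasso:sc} supplies a neighbourhood $U^{\hat x}$ for each minimiser and the family $\{U^{\hat x}\}$ covers $\hat X$. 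A strictly complementary pair at one (maximal-support) minimiser does not give this covering, and your fallback averaging argument has the same feature: it again produces the certificate at an inclusion-maximal-support $\optx$, not at the prescribed $\hat x$. So, measured against the statement as the paper proves and applies it, your proposal establishes a weaker claim.

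It is worth saying, though, that this gap cannot be closed, because the pointwise claim is false at a fixed $\hat x$ of non-maximal support: any $\opt d\in\range A^*\isect\Sign\hat x$ is a dual-optimal vector for the LP and hence complementary to \emph{every} minimiser, so $\abs{\opt d_k}=1$ at every index where some element of $\hat X$ is nonzero; a strictly complementary certificate at $\hat x$ can therefore only exist if $\support\hat x$ contains the support of every other minimiser. For instance, with $A=(1\;\,1)$, $\hat b=1$ and $\hat x=(1,0)$, the only certificate is $\opt d=(1,1)$, which is not strictly complementary with $\hat x$ (and the paper's own first step, "clearly $x=\hat x$ is a solution" of the perturbed problem, fails in this example; the cited result of Zhang--Yin--Cheng assumes solution uniqueness, where the issue does not arise). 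In short: what you prove --- existence of a strictly complementary pair at \emph{some} minimiser, i.e.\ Goldman--Tucker --- is the statement that is actually true, but you should flag explicitly that it is weaker than what the paper's proof asserts and what \cref{thm:lasso:main} relies on, rather than presenting it as an equivalent reading of the lemma.
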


\begin{proof}
    Let $r \in \R^n$ be defined by
    \[
        r_k \defeq \begin{cases}
            -1, & \hat x_k=0,\, \hat d_k=1,\\
            1, & \hat x_k=0,\, \hat d_k=-1,\\
            0, & \text{otherwise}.
        \end{cases}
    \]
    We try to replace $(\hat x, \hat d)$ by a strictly complementary solution by considering for $\alpha>0$, $t \defeq \norm{\hat x}_1$ and $\B_1 \subset \R^n$ the $1$-norm unit ball the problem
    \[
        \min_{x \in \R^n} J(x)=\left(\alpha \iprod{r}{x} + \delta_{t \B_1}(x)\right) + \delta_{\{\hat b\}}(Ax).
    \]
    Clearly $x=\hat x$ is a solution to this problem and $J(\hat x)=0$.
    Moreover $A\hat x = \hat b \in \ri \{\hat b\}=\{\hat b\}$, where $\ri$ denotes the relative interior.
    Therefore, the Fenchel--Rockafellar dual problem (see, e.g., \cite{ekeland1999convex,clasonvalkonen2020nonsmooth}) is
    \[
        \min_{w \in \R^m} Q(w) \defeq t\norm{A^*w+\alpha r}_\infty + \iprod{\hat b}{w},
    \]
    and a minimiser $\opt w$ satisfies $Q(\opt w)=-J(\hat x)=0$. In other words $t\norm{A^*\opt w+\alpha r}_\infty = - \iprod{\hat b}{\opt w}$.

    If $A^*\opt w=-\alpha r$, then for small enough $\alpha>0$ we have
    \[
        \opt d \defeq \hat d-A^*\opt w = \hat d+\alpha r \in \Sign \hat x \isect \range A^*
    \]
    and $(\hat x, \opt d)$ is strictly complementary. Thus it fulfills our claims.

    If $A^*\opt w\ne -\alpha r$, let $\alt w \defeq -\opt w/s$ for $s \defeq \norm{A^*\opt w+\alpha r}_\infty=-\iprod{\hat b}{\opt w}/t$.
    Then
    \begin{equation}
        \label{eq:lasso:perturb-ok}
        \iprod{\hat x}{A^*\alt w}
        =\iprod{A\hat x}{\alt w}
        =\iprod{\hat b}{\alt w}
        =-\inv s \iprod{\hat b}{\opt w}
        =t=\norm{\hat x}_1
    \end{equation}
    and
    \begin{equation}
        \label{eq:lasso:perturb-bound}
        \norm{A^*\alt w-\frac{\alpha}{s}r}_\infty \le 1.
    \end{equation}
    Let $k \in \{1,\ldots,n\}$.
    If $\hat x_k \ne 0$ or $k \in Z(\hat x, \hat d)$, \eqref{eq:lasso:perturb-bound} implies by the definition of $r_k$ that $[A^*\hat w]_k \in [-1, 1]$.
    Due to \eqref{eq:lasso:perturb-ok}, we must therefore have that $[A^*\hat w]_k = \hat d_k = \sign \hat x_k$ when $\hat x_k \ne 0$.
    If, on the other hand $\hat x_k=0$ and $k \not\in Z(\hat x, \hat d)$, then \eqref{eq:lasso:perturb-bound} shows that
    \[
        -1-\frac{\alpha}{s}\sign \hat d_k \le A^*\alt w \le 1-\frac{\alpha}{s}\sign \hat d_k.
    \]
    If $\hat d_k>0$, this guarantees $-2 < [A^*\alt w]_k < 1$.
    If $\hat d_k<0$, this guarantess $-1 < [A^*\alt w]_k < 2$.
    Consequently, for small enough $\alpha>0$,
    \[
        \opt d \defeq \frac{1}{2}(\hat d + A^*\alt w) \in \Sign \hat x \isect \range A^*,
    \]
    and $(\hat x, \opt d)$ is strictly complementary.
    Indeed, if $\hat d_k \in (-1, 1)$, then still $\opt d_k \in (-1, 1)$ due to $[A^*\alt w]_k \in [-1, 1]$.
    On the other hand, if $\hat d_k \in \{-1,1\}$ and $\hat x \ne 0$, we have just proved that $\hat d_k + [A^*\alt w]_k \in (-1, 1)$.
    Finally, if $\hat x_k \ne 0$, we have proved above that $[A^*\hat w]_k = \hat d_k \in \sign \hat x_k$.
    Thus $(\hat x, \opt d)$ it fulfills our claims.
\end{proof}

The next lemma shows that local subdifferentiability is requisite for the basic source condition \eqref{eq:lasso:basic-source-condition} to hold at a ground-truth $\hat x$ admitting a strictly complementary dual variable $\hat d$..

\begin{lemma}
    \label{lemma:lasso:sc}
    Suppose $(\hat x, \hat d)$ is strictly complementary and satisfies the basic source condition  \eqref{eq:lasso:basic-source-condition}.
    Let $F_\delta(x) \defeq \frac{1}{2}\norm{Ax-b_\delta}_2^2 + \alpha_\delta\norm{x}_1$ on $\R^n$ for some $\alpha_\delta \in (0, 1/2)$ and $b_\delta \in \R^m$.
    Then $\subdiff F_\delta$ is $(A, \alpha_\delta)$-strongly locally subdifferentiable at $\hat x$ for $\hat x^* \defeq A^*(A\hat x - b_\delta) + \alpha_\delta \hat d \in \subdiff F_\delta(\hat x)$ in some open neighbourhood $U=U^{\hat x}$ of $\hat x$. The factor $\gamma$ of $(A, \alpha_\delta)$-strong local subdifferentiability is independent of $\delta$, as is $U$.
\end{lemma}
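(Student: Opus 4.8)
The goal is to verify the growth inequality \eqref{eq:linear:to-prove} of \cref{lemma:linear:to-prove:semistrong} with $\hat d$ the strictly complementary dual variable, $\gamma_\delta=\alpha_\delta$, and some $\gamma>0$ independent of $\delta$. Concretely, after dividing through by $\alpha_\delta$, I need a neighbourhood $U$ of $\hat x$ and $\gamma>0$ with
\[
    \norm{x}_1-\norm{\hat x}_1-\dualprod{\hat d}{x-\hat x}
    +\frac{1/2-\gamma}{\alpha_\delta}\norm{A(x-\hat x)}_2^2
    \ge
    \gamma \dist^2(x, \hat X)
    \quad (x \in U),
\]
with $\gamma<1/2$ so the middle coefficient is nonnegative (and then I may simply drop that middle term, since $\alpha_\delta<1/2$ only helps). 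So it suffices to show the $A$-free inequality $\norm{x}_1-\norm{\hat x}_1-\dualprod{\hat d}{x-\hat x}\ge \gamma\dist^2(x,\hat X)$ near $\hat x$ — i.e. that $R=\norm{\freevar}_1$ itself is metrically subregular / semi-strongly locally subdifferentiable at $\hat x$ for $\hat d$, with $\inv{[\subdiff R]}(\hat d)$ playing the role of $\hat X$. The key geometric fact I will exploit is that strict complementarity forces $\inv{[\subdiff R]}(\hat d)$ to agree locally with $\hat X=\argmin_{Ax=\hat b}\norm{x}_1$, so the distance in the growth bound is the right one.

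**Main steps.** First I would compute the Bregman-type remainder explicitly. Writing $I_\pm=\{k:\hat x_k\ne 0\}$ and $I_0=\{k:\hat x_k=0\}$, and using $\hat d_k=\sign\hat x_k$ on $I_\pm$ and $\abs{\hat d_k}<1$ on $I_0$ (strict complementarity), one gets for $x$ close enough to $\hat x$ that $\sign x_k=\sign\hat x_k$ on $I_\pm$, hence
\[
    \norm{x}_1-\norm{\hat x}_1-\dualprod{\hat d}{x-\hat x}
    =\sum_{k\in I_0}\bigl(\abs{x_k}-\hat d_k x_k\bigr)
    =\sum_{k\in I_0}\bigl(\abs{x_k}-\abs{\hat d_k x_k}\bigr)\cdot(\text{sign factor})
    \ge \sum_{k\in I_0}(1-\abs{\hat d_k})\abs{x_k}.
\]
Let $c\defeq\min_{k\in I_0}(1-\abs{\hat d_k})>0$; this is the uniform strict-complementarity margin, independent of $\delta$. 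So the remainder is $\ge c\sum_{k\in I_0}\abs{x_k}=c\,\norm{P_{I_0}x}_1$ where $P_{I_0}$ is the coordinate projection onto $I_0$. Second, I would identify $\hat X$ near $\hat x$: from \cref{lemma:lasso:construction}'s proof circle of ideas (optimality of $\hat x$ for $\min_{Ax=\hat b}\norm{x}_1$ together with strict complementarity), any $x$ near $\hat x$ with $Ax=\hat b$ and $x_k=0$ for all $k\in I_0$ has the same objective value and hence lies in $\hat X$; more to the point, I claim $\hat X$ contains, locally, exactly the affine piece $\{x: Ax=\hat b,\ x_{I_0}=0,\ \sign x_k=\sign\hat x_k\text{ on }I_\pm\}$ — or at least that $\dist(x,\hat X)\le \dist(x,\set{Ax'=\hat b, x'_{I_0}=0})$ locally. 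Third, and this is the crux, I need $\dist(x,\hat X)^2\lesssim \norm{P_{I_0}x}_1^2 + \norm{A(x-\hat x)}_2^2$ in a neighbourhood: the constraint $Ax'=\hat b$ slack is controlled by $\norm{A(x-\hat x)}_2$, and the constraint $x'_{I_0}=0$ slack by $\norm{P_{I_0}x}_1$, so a Hoffman-type bound for the polyhedral set $\{Ax'=\hat b,\ x'_{I_0}=0\}$ gives $\dist(x,\hat X)\le L(\norm{A(x-\hat x)}_2+\norm{P_{I_0}x}_1)$ for some finite $L$ depending only on $A$ and the index set $I_0$ — hence independent of $\delta$. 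Squaring, $\dist^2(x,\hat X)\le 2L^2\norm{A(x-\hat x)}_2^2+2L^2\norm{P_{I_0}x}_1^2\le 2L^2\norm{A(x-\hat x)}_2^2+2L^2\rho\,\norm{P_{I_0}x}_1$ on the bounded neighbourhood $U$ where $\norm{P_{I_0}x}_1\le\rho$.

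**Closing the estimate and the obstacle.** Combining: on $U$,
\[
    \norm{x}_1-\norm{\hat x}_1-\dualprod{\hat d}{x-\hat x}+\frac{1/2-\gamma}{\alpha_\delta}\norm{A(x-\hat x)}_2^2
    \ge c\,\norm{P_{I_0}x}_1 + (1/2-\gamma)\norm{A(x-\hat x)}_2^2,
\]
using $\alpha_\delta<1$, while $\gamma\dist^2(x,\hat X)\le 2\gamma L^2\norm{A(x-\hat x)}_2^2+2\gamma L^2\rho\norm{P_{I_0}x}_1$; choosing $\gamma>0$ small enough that $2\gamma L^2\le 1/2-\gamma$ and $2\gamma L^2\rho\le c$ yields \eqref{eq:linear:to-prove}, with $\gamma,U$ manifestly independent of $\delta$. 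Then \cref{lemma:linear:to-prove:semistrong} gives $(A,\alpha_\delta)$-strong local subdifferentiability at $\hat x$ for $A^*(A\hat x-b_\delta)+\alpha_\delta\hat d$, which is the claim. I expect the main obstacle to be the second and third steps — namely pinning down that the relevant polyhedron $\set{Ax'=\hat b,\ x'_{I_0}=0}$ coincides locally with $\hat X$ (one direction, $\subseteq\hat X$, needs the optimality characterisation; the reverse inclusion of the distances needs that minimisers near $\hat x$ cannot "use" the zero coordinates, which is exactly strict complementarity), and getting the Hoffman constant $L$ to be $\delta$-independent (true because it depends only on $A$ and the combinatorial data $I_0$, both fixed). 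Everything else is the elementary $\ell^1$ computation above plus Young's inequality as already used in \eqref{eq:linear:subreg:base-est}.
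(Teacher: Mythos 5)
Your argument is correct and in substance the same as the paper's: both reduce via \cref{lemma:linear:to-prove:semistrong} to \eqref{eq:linear:to-prove}, lower-bound the $\ell^1$ Bregman term on the inactive coordinates $I_0=\{k:\hat x_k=0\}$ by the strict-complementarity margin, observe that near $\hat x$ the set $\hat X$ agrees with the affine set $\{x : Ax=\hat b,\ x_{I_0}=0\}$, and then control $\dist(x,\hat X)$ jointly by $\norm{A(x-\hat x)}_2$ and the size of $x_{I_0}$. The differences are only in packaging: where you invoke a Hoffman error bound for the stacked system consisting of $A$ and the rows selecting $I_0$, the paper works with $M=A^*A+\sum_{k\in Z(\hat x,\hat d)}\mathbb{1}_k\mathbb{1}_k^\top$ from \eqref{eq:lasso:m}, decomposes $x-\hat x$ orthogonally to $\ker M$, and uses the minimal nonzero eigenvalue of $M$ — the same constant, since $M$ is the Gram matrix of that stacked system; and where you certify the local identification of $\hat X$ via the dual certificate $\hat d\in\range A^*$, the paper perturbs $\hat x$ along $\ker M$ and appeals to $\ell^1$-minimality (the circle of ideas you cite from \cref{lemma:lasso:construction}); finally, your first-power bound $c\norm{x_{I_0}}_1$ together with boundedness of $U$ plays the role of the paper's direct squared bound $\beta_0\sum_{k\in Z(\hat x,\hat d)}\abs{x_k}^2$. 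One caveat: your parenthetical remark that you ``may simply drop that middle term'', i.e.\ that the $A$-free inequality $\norm{x}_1-\norm{\hat x}_1-\dualprod{\hat d}{x-\hat x}\ge\gamma\dist^2(x,\hat X)$ would suffice, is false — take $x=\hat x+t\mathbb{1}_j$ with $\hat x_j\ne 0$ and $A\mathbb{1}_j\ne 0$, for which the left-hand side vanishes for small $t$ while $\dist(x,\hat X)>0$ since $\hat X\subset\{x:Ax=\hat b\}$. This does not damage your proof, because your closing estimate explicitly retains the $\norm{A(x-\hat x)}_2^2$ term through the Hoffman bound, but the remark should be deleted.
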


\begin{proof}
    By \cref{lemma:linear:to-prove:semistrong} we need to prove
    \begin{equation}
        \label{eq:lasso:to-prove}
        \alpha_\delta[\norm{x}_1-\norm{\hat x}_1-\iprod{\hat d}{x-\hat x}]
        +\left(\frac{1}{2}-\gamma\right)\norm{A(x-\hat x)}_2^2
        \ge
        \gamma\gamma_\delta \dist^2(x, \hat X)
        \quad (x \in U).
    \end{equation}
    Let
    \begin{equation}
        \label{eq:lasso:m}
        M \defeq  A^*A + \sum_{k \in Z(\hat x, \hat d)} \mathbb{1}_k \mathbb{1}_k^\top,
    \end{equation}
    where $\mathbb{1} \defeq (1, \ldots, 1)$.
    Suppose $z \in \ker M$.
    Then $z \in \ker A$ and $z_k=0$ for all $k \in Z(\hat x, \hat d)$.
    We will show that also $\sum_{k \not \in Z(\hat x, \hat d)} z_k=0$ and $\hat x + z \in \hat X$ if $\norm{z}$ is small enough.
    Indeed, suppose, to reach a contradiction, that this does not hold.
    Let $\epsilon>0$.
    We may assume that $\norm{z} \le \epsilon$.
    We have $A(\hat x+z)=A \hat x=\hat b$ and, for small enough $\epsilon>0$,
    \[
        \norm{\hat x+z}_1
        =
        \sum_{k \not\in Z(\hat x, \hat d)} \abs{\hat x_k+z_k}
        =
        \sum_{k: \hat x_k \ne 0} \abs{\hat x_k+z_k}
        =
        \sum_{k: \hat x_k \ne 0}\left( \abs{\hat x_k}+z_k\right)
        =\norm{\hat x}_1 + \sum_{k \not \in Z(\hat x, \hat d)} z_k.
    \]
    If now $\sum_{k \not \in Z(\hat x, \hat d)} z_k<0$, this contradicts $\hat x$ being an $\norm{\freevar}_1$-minimising solution to $A\hat x=\hat b$. Since also $-z \in \ker A$, we must therefore have $\sum_{k \not \in Z(\hat x, \hat d)} z_k=0$.
    Moreover, $\hat x + z \in \hat X$.

    For now, let $U=U^{\hat x}$ be an arbitrary bounded neighbourhood of $\hat x$ and $\rho=\sup_{x \in U} \norm{x_k}_\infty$.
    Then with $\beta_0 \defeq \inv\rho \inf_{k \in Z(\hat x, \hat d)}(1-\abs{\hat d_k})>0$ we have
    \begin{equation}
        \label{eq:subreg:l1-semi-strong-subreg}
        \begin{aligned}[t]
        \norm{x}_1 - \norm{\hat x}_1 - \iprod{\hat d}{x-\hat x}
        &
        \ge
        \sum_{k \in Z(\hat x, \hat d)}(\abs{x}-\abs{\hat x} - \hat d_k(x_k-\hat x_k))
        =
        \sum_{k \in Z(\hat x, \hat d)}(\abs{x} - \hat d_k(x_k))
        \\
        &
        \ge
        \sum_{k \in Z(\hat x, \hat d)} (1-\abs{\hat d})\abs{x_k}
        \ge
        \rho\beta_0 \sum_{k \in Z(\hat x, \hat d)} \abs{x_k}
        \ge
        \beta_0 \sum_{k \in Z(\hat x, \hat d)} \abs{x_k}^2
        \quad (x \in U).
        \end{aligned}
    \end{equation}
    Since $\alpha_\delta \in (0, 1/2)$, we may find $\gamma \in (0, 1/2)$ with $\alpha_\delta  \le \frac12 - \gamma$.
    By \cref{eq:subreg:l1-semi-strong-subreg}, for some $\beta \defeq \min\{1,\beta_0\}$ we have
    \begin{multline}
        \label{eq:lasso:est1}
        \alpha_\delta[\norm{x}_1-\norm{\hat x}_1-\iprod{\hat d}{x-\hat x}]
        +\left(\frac{1}{2}-\gamma\right)\norm{A(x-\hat x)}_2^2
        \\
        \ge
        \alpha_\delta \beta\sum_{k \in Z(\hat x, \hat d)} \abs{x_k}^2
        +\left(\frac{1}{2}-\gamma\right)\norm{A(x-\hat x)}_2^2
        \ge
        \alpha_\delta \beta \iprod{M(x-\hat x)}{x-\hat x}.
    \end{multline}
    With $x \in U$, write $x-\hat x=z+w$ where $z \in \ker M$ and $w \perp \ker M$.
    That is, $x-\opt x=w$ for $\opt x=\hat x+z$. By the previous paragraph, $\opt x \in \hat X$ if $\norm{z}$ is small enough, i.e., the neighbourhood $U$ of $\hat x$ is small enough and $x \in U$.
    Thus, for $\lambda_{\min}>0$ the minimal non-zero eigenvalue of $M$,
    \[
         \alpha_\delta \beta \iprod{M(x-\hat x)}{x-\hat x}
         =
         \alpha_\delta \beta \iprod{Mw}{w}
         \ge
         \alpha_\delta \beta \lambda_{\min} \norm{w}_X^2
         =
         \alpha_\delta \beta \lambda_{\min} \norm{x-\opt x}_X^2.
    \]
    This with \eqref{eq:lasso:est1} proves \eqref{eq:lasso:to-prove}, hence the claim for $\gamma_\delta=\alpha_\delta$ and any $0 < \gamma < \min\{1/2,\beta \lambda_{\min}\}$.
\end{proof}

Finally, we may show that asymptotically accurate and sufficiently regularised solutions to $\ell^1$-regularised regression otherwise unconditionally converge to the \emph{set} of $1$-norm-minimising ground-truths.
Based on the practical implementation of the procedure, $\ell^1$-regularised regression is also known in the literature as “Lasso” for “least absolute shrinkage and selection operator”.

\begin{theorem}[Lasso]
    \label{thm:lasso:main}
    Let $R(x)=\norm{x}_1$ and and $A \in \R^{m \times n}$  with $X=\R^n$ and $Y=\R^m$.
    Suppose \cref{ass:linear:main} holds and that the accuracy $e^\delta$ and regularisation parameter $\alpha_\delta>0$, moreover, satisfy
    \[
        \lim_{\delta \downto 0} \left(\alpha_\delta, \frac{\delta^2}{\alpha_\delta}, \frac{e_\delta}{\alpha_\delta}\right)=0.
    \]
    Then $\dist(x_\delta, \hat X) \to 0$ as $\delta \downto 0$.
\end{theorem}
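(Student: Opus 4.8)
The plan is to assemble \cref{lemma:lasso:construction}, \cref{lemma:lasso:sc} and \cref{thm:linear:subreg:weak}, the finite-dimensional setting making all topological hypotheses of the latter automatic. First I would record that $\hat X \ne \emptyset$: since $A\hat x = \hat b$ is solvable by \cref{ass:linear:main} and $\norm{\freevar}_1$ is coercive with closed sublevel sets on $\R^n$, the problem $\min\{\norm{x}_1 \mid Ax = \hat b\}$ attains its minimum, and $\hat X$ is exactly the (bounded, polyhedral) set of these minimisers. As recalled after \eqref{eq:lasso:basic-source-condition}, membership $\hat x \in \hat X$ is equivalent in finite dimensions to the basic source condition \eqref{eq:lasso:basic-source-condition} holding for some $\hat d \in \range A^* \isect \Sign \hat x$; no constraint qualification is needed since the constraint $Ax=\hat b$ is affine.

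Second, I would upgrade this to the \emph{semi-strong} source condition at every ground-truth. Fix $\hat x \in \hat X$ together with a corresponding $\hat d \in \range A^* \isect \Sign\hat x$. By \cref{lemma:lasso:construction} there is a \emph{strictly complementary} $\opt d \in \R^n$ with $(\hat x, \opt d)$ still satisfying \eqref{eq:lasso:basic-source-condition}; in particular $\opt d \in \range A^*$, so $\opt d = -A^*\hat w$ for some $\hat w \in \R^m$. Applying \cref{lemma:lasso:sc} to the pair $(\hat x, \opt d)$ then gives, for $\alpha_\delta \in (0, 1/2)$, the $(A,\alpha_\delta)$-strong local subdifferentiability of $J_\delta + \alpha_\delta R$ at $\hat x$ with respect to $\hat X$ for
\[
    A^*(A\hat x - b_\delta) + \alpha_\delta \opt d = A^*(A\hat x - b_\delta) - \alpha_\delta A^*\hat w = J_\delta'(\hat x) - \alpha_\delta A^*\hat w,
\]
on some open neighbourhood $U^{\hat x} \ni \hat x$ with a factor $\gamma > 0$ (possibly depending on $\hat x$) that is independent of $\delta$. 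This is precisely \cref{ass:linear:semi-strong-source-condition} at $\hat x$ with $\gamma_\delta = \alpha_\delta$. Since $\alpha_\delta \downto 0$, the requirement $\alpha_\delta \in (0,1/2)$ holds for all small $\delta$, which is all that matters as the conclusion concerns $\delta \downto 0$ (one may also redefine $\alpha_\delta$ for large $\delta$ without affecting the limit).

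Third, I would invoke \cref{thm:linear:subreg:weak} with $\tilde X \defeq \hat X$. The covering condition $\Union_{\hat x \in \tilde X} U^{\hat x} \supset \hat X$ is immediate since $\hat x \in U^{\hat x}$. In $\R^n$ the weak topology coincides with the norm topology, so each $U^{\hat x}$ is weakly open, and the set $U_\rho$ from \eqref{eq:linear:strong-subreg:u-cond}, being closed and bounded (the constraint $\norm{x}_1 \le \norm{\hat x}_1 + \rho$ bounds it), is weakly compact for any fixed $\rho > 0$. Finally, with $\gamma_\delta = \alpha_\delta$ one has $\min\{\alpha_\delta,\gamma_\delta\} = \alpha_\delta$ and $\alpha_\delta^2/\alpha_\delta = \alpha_\delta$, so the stated hypothesis $\lim_{\delta\downto 0}(\alpha_\delta, \delta^2/\alpha_\delta, e_\delta/\alpha_\delta)=0$ is exactly \eqref{eq:linear:subreg:weak:param-assumption}. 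Thus \cref{thm:linear:subreg:weak} yields $\dist(x_\delta, \hat X) \to 0$ as $\delta \downto 0$, which is the claim.

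The only genuinely nontrivial input is \cref{lemma:lasso:sc} (already proved), which builds the augmented matrix $M$ of \eqref{eq:lasso:m} by adding to $A^*A$ the coordinates in $Z(\hat x, \hat d)$, shows that $\ker M$ locally parametrises $\hat X$ near $\hat x$, and uses strict complementarity to recover genuine quadratic growth of $\norm{\freevar}_1$ in the directions of $\ker A$ transverse to $\ker M$. Everything else above is bookkeeping: matching the multiplier $\hat w$ produced by \cref{lemma:lasso:construction} to the one demanded by the source condition, noting the $\delta$-independence of $\gamma$ and $U^{\hat x}$, and observing that in finite dimensions the weak compactness/openness requirements of \cref{thm:linear:subreg:weak} reduce to "closed and bounded" and "open".
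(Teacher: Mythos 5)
Your proposal is correct and follows essentially the same route as the paper's own proof: nonemptiness of $\hat X$ via coercivity of $\norm{\freevar}_1$, upgrading the basic source condition to a strictly complementary pair via \cref{lemma:lasso:construction}, invoking \cref{lemma:lasso:sc} to get the semi-strong source condition of \cref{ass:linear:semi-strong-source-condition} with $\gamma_\delta=\alpha_\delta$, and concluding with \cref{thm:linear:subreg:weak}, the finite-dimensional setting rendering the weak compactness/openness hypotheses automatic. Your explicit matching of $\opt d=-A^*\hat w$ and the remark that $\alpha_\delta\in(0,1/2)$ only needs to hold for small $\delta$ are fine bookkeeping details the paper leaves implicit.
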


\begin{proof}
    We verify the conditions \cref{thm:linear:subreg:weak}.
    The function $J(x) \defeq \delta_{\{x \mid Ax=\hat b\}}(x) + \norm{x}_1$ is coercive, lower semicontinuous, and bounded from below.
    Therefore the problem $\min J$ has a solution $\hat x \in \hat X$. As we have already discussed, these  solutions are characterised by the source condition \eqref{eq:lasso:basic-source-condition} for some $\hat d$.
    Thus by \cref{lemma:lasso:construction}, for every $\hat x \in \hat X$ there exists $\hat d$ such that $(\hat x, \hat d)$ is strictly complementary and satisfies \eqref{eq:lasso:basic-source-condition}.
    \Cref{lemma:lasso:sc} shows for all $\hat x \in \hat X$ and all $\delta>0$ such that $\alpha_\delta \in (0, 1/2)$ that $\subdiff[J_\delta+\alpha_\delta R]$ is $(A, \alpha_\delta)$-strongly locally subdifferentiable at $\hat x$ for $\hat x^*_\delta \defeq A^*(A\hat x-b_\delta)+\alpha_\delta \hat d$ with the parameter $\gamma>0$ independent of $\delta>0$.
    Consequently the semi-strong source condition of \cref{ass:linear:semi-strong-source-condition} holds for all $\hat x \in \hat X$ in some neighbourhood $U^{\hat x} \ni \hat x$.
    Clearly $\Union_{\hat x \in \hat X} U^{\hat x} \supset \hat X$.
    Moreover, $U_\rho$ is compact by the lower semicontinuity and coercivity of $R$ and the continuity of $A$, and $U^{\hat x}$ is open.
    By the finite-dimensionality of $X$ the respective weak compactness and weak openness assumptions of \cref{thm:linear:subreg:weak} hold.
    The rest follows from \cref{thm:linear:subreg:weak} and $\gamma_\delta=\alpha_\delta$.
\end{proof}

\begin{remark}
    If $M$ defined in \eqref{eq:lasso:m} is positive definite, then the strong source condition of \cref{ass:linear:strong-source-condition} holds at $\hat x$. In that case we may apply \cref{thm:linear:strong-subreg} to obtain $\norm{x_\delta-\hat x} \to 0$ and the estimates \eqref{eq:linear:strong-subreg:estimate}.
\end{remark}

\subsection{Total variation regularised image reconstruction}

Suppose, as before, that $\Omega \subset \R^n$.
On the space $X=L^2(\Omega)$, define the \term{total variation} regulariser
\[
    R(x)
    =
    \begin{cases}
    \norm{Dx}_{\Meas(\Omega)}, & \BVspace(\Omega) \isect L^2(\Omega), \\
    \infty, & \text{otherwise}.
    \end{cases}
\]
Here $D \in \linear(\BVspace(\Omega); \Meas(\Omega))$ is the distributional differential, mapping functions of bounded variation on a domain $\Omega \subset \R^m$ to Radon measures.
The Radon norm $\norm{Dx}_{\Meas(\Omega)} \defeq \int_\Omega \d\abs{Dx}$, where $\abs{Dx}$ is the total variation measure of $Dx$.
For details on functions of bounded variation we refer to \cite{ambrosio2000fbv}.
For brevity we write $\norm{\freevar}_2=\norm{\freevar}_{L^2(\Omega)}$ and $\norm{\freevar}_\Meas = \norm{\freevar}_{\Meas(\Omega; \R^m)}$.

Any $\hat d \in L^2(\Omega) \isect \subdiff\norm{D\freevar}_{\Meas(\Omega; \R^m)}(\hat x)$ satisfies by \cite{anzellotti1983pairings} or \cite[Proposition 5 \& Lemma 3 \& Definition 11]{meyer2002oscillating} for some $\hat \phi \in L^\infty(\Omega; \R^m)$ that
\begin{subequations}
\label{eq:tv-basic-source-cond}
\begin{equation}
    \label{eq:tv:subdiff}
    \hat d = -\divergence \hat \phi,
    \quad
    \norm{\hat\phi}_\infty \le 1,
    \quad
    \hat\phi D\hat x = \abs{D\hat x}.
\end{equation}
Hence the basic source condition \eqref{eq:linear:basic-source-condition} reduces to \eqref{eq:tv:subdiff} with
\begin{equation}
    A\hat x=\hat b
    \quad\text{and}\quad
    \hat d \in \range A^*.
\end{equation}
\end{subequations}
The set $\hat X$ is given by $\hat x$ satisfying \eqref{eq:tv-basic-source-cond} for some $\hat d$.

\begin{remark}
    It may often seem from the literature that \emph{all} $x^* \in \subdiff\norm{D\freevar}_{\Meas}(x)$ would have the form $x^*=-\divergence \phi$ from some $\phi \in L^\infty(\Omega; \R^m)$.
    This is not the case. Consider, for example, the step function $\hat x=\chi_{[0, \infty)}$ on $\R$.
    Then $D\hat x=\delta_{\{0\}}$. Take $\hat x^* \in \BVspace(\R)^*$ given by $\hat x^*(x)=D^jx(\R)$ for all $x \in \BVspace(\R)$, i.e., the measure of the jump part of the differential of $x$. Then $\hat x^*(\hat x)=\norm{D \hat x}_{\Meas}$ and $\norm{Dx}_{\Meas} \ge \hat x^*(x)$ for all $x \in \BVspace(\R)$. Thus
    \[
        \norm{Dx}_{\Meas}  - \norm{D \hat x}_{\Meas} \ge \hat x^*(x-\hat x)
    \]
    meaning that $\hat x^* \in \subdiff\norm{D\freevar}_{\Meas}(\hat x)$.
    However, if we had $\hat x^* = -\divergence \phi$, then we would have $\hat x^*(x)=Dx(\phi)$ contradicting the definition of $\hat x^*$ for suitable $x \in \BVspace(\R)$.
    It is thus important in our overall example that \emph{we are actually working in $L^2(\Omega)$}: $\range A^* \subset L^2(\Omega)$. This allows us to limit our attention to subderivatives $x^* \in L^2(\Omega)$.
\end{remark}

Let $\Lebesgue$ denote the Lebesgue measure on $\R^m$.
For a collection $\mathcal{O}$ of disjoint measurable $\Theta \subset \Omega$ with $\Lebesgue(\Theta)>0$, we define the centring operator $K_{\mathcal{O}}: L^2(\Omega) \to L^2(\Union_{\Theta \in \mathcal{O}} \Theta)$ by
\[
    (K_{\mathcal{O}} x)|\Theta \defeq x|\Theta - \frac{1}{\Lebesgue(\Theta)}\int_\Theta x \d \Lebesgue
    \quad
    (\Theta \in \mathcal{O}).
\]
Here $x|\Theta$ denotes the restriction of $x$ on the subdomain $\Theta$.
For brevity, we call the possibly empty collection $\mathcal{O}$ of disjoint subsets $\Theta \subset \Omega$ a \term{collection of flat areas} for $x$” if each $\Theta \in \mathcal{O}$ has a Lipschitz boundary and is such that $\abs{Dx}(\Theta)=0$ (i.e., $x$ is a.e.~constant in $\Theta$). If, moreover, \eqref{eq:tv:subdiff} holds, and $\sup_{\xi \in \Theta} \abs{\hat\phi(\xi)} < 1$ for all $\Theta \in \mathcal{O}$,
\begin{center}
     we call $\mathcal{O}$ a  \term{collection of strictly flat areas} for $\hat x$ at $\hat \phi$.
\end{center}
Note that $K_{\mathcal{O}} x=0$ if $\mathcal{O}$ is a collection of flat areas for $x$.

\begin{remark}[Stair-casing]
    Image reconstructions based on Tikhonov-style total variation regularisation commonly exhibit large flat areas or “stair-casing” \cite{ring2000structural,bredies2019sparsity}.
    The \emph{strictness} of the collection of flat areas $\mathcal{O}$ can be related to strict complementarity conditions in optimisation.
\end{remark}

We start with a technical lemma.

\begin{lemma}
    \label{lemma:tv:technical}
    Let $\hat x, \hat d \in L^2(\Omega)$ and $\hat \phi \in L^\infty(\Omega; \R^m)$ satisfy \eqref{eq:tv:subdiff}.
    Let $\mathcal{O}$ be a finite collection of strictly flat areas for $\hat x$ at $\hat \phi$ and suppose the neighbourhood $U \ni \hat x$ satisfies for some $C>0$ that
    \begin{equation}
        \label{eq:tv:neigh}
        U \subset \{x \in L^2(\Omega) \mid \norm{Dx}_{\Meas} \le C \}.
    \end{equation}
    Then, for some constant $\epsilon=\epsilon(\mathcal{O}, \hat d)>0$,
    \[
        \norm{Dx}_{\Meas}
        -\norm{D\hat x}_{\Meas}
        -
        \dualprod{\hat d}{x-\hat x}
        \ge
        \frac{\epsilon}{C} \norm{K_{\mathcal{O}}(x-\hat x)}_{L^2(\Union_{\Theta \in \mathcal{O}}\Theta)}^2
        \quad (x \in U).
    \]
\end{lemma}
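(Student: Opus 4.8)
The left-hand side is the Bregman divergence $B_R^{\hat d}(x,\hat x)$ and is automatically nonnegative, since \eqref{eq:tv:subdiff} says precisely that $\hat d\in\subdiff R(\hat x)$; the plan is to make this nonnegativity quantitative on the flat areas. First I would discard the trivial case $x\notin\BVspace(\Omega)\isect L^2(\Omega)$ (then $R(x)=\infty$) and assume $x\in\BVspace(\Omega)\isect L^2(\Omega)$, so that $\norm{Dx}_\Meas\le C$ by \eqref{eq:tv:neigh}. Writing $\hat d=-\divergence\hat\phi$ with $\divergence\hat\phi\in L^2(\Omega)$, I would apply the Gauss--Green formula for the Anzellotti pairing $(\hat\phi,D\freevar)$ from \cite{anzellotti1983pairings,meyer2002oscillating} to obtain $\dualprod{\hat d}{x-\hat x}=\int_\Omega d(\hat\phi,Dx)-\int_\Omega d(\hat\phi,D\hat x)$, and then use $\hat\phi D\hat x=\abs{D\hat x}$, i.e.\ $\int_\Omega d(\hat\phi,D\hat x)=\norm{D\hat x}_\Meas$, to collapse the left-hand side to
\[
    \norm{Dx}_\Meas-\norm{D\hat x}_\Meas-\dualprod{\hat d}{x-\hat x}=\int_\Omega d\abs{Dx}-\int_\Omega d(\hat\phi,Dx).
\]
Since $\norm{\hat\phi}_\infty\le1$, the pairing measure obeys $(\hat\phi,Dx)\le\abs{Dx}$, so this is $\ge0$; the remaining work is to bound it below on $\Union_{\Theta\in\mathcal{O}}\Theta$.

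Next I would localise. For each $\Theta\in\mathcal{O}$, strict flatness gives $\sup_{\xi\in\Theta}\abs{\hat\phi(\xi)}\le1-\eta_\Theta$ with $\eta_\Theta>0$, and since the pairing satisfies $(\hat\phi,Dx)|_\Theta\le(\sup_\Theta\abs{\hat\phi})\,\abs{Dx}|_\Theta$ we get $\int_\Theta d\abs{Dx}-\int_\Theta d(\hat\phi,Dx)\ge\eta_\Theta\,\abs{Dx}(\Theta)$. Because the sets in $\mathcal{O}$ are disjoint and the measure $d\abs{Dx}-d(\hat\phi,Dx)$ is globally nonnegative, summing over the finite collection yields, with $\eta\defeq\min_{\Theta\in\mathcal{O}}\eta_\Theta>0$,
\[
    \int_\Omega d\abs{Dx}-\int_\Omega d(\hat\phi,Dx)\ge\eta\sum_{\Theta\in\mathcal{O}}\abs{Dx}(\Theta).
\]

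Then I would convert $\abs{Dx}(\Theta)$ into the $L^2$-oscillation used on the right. Since $\hat x$ is a.e.\ constant on each $\Theta$ we have $K_{\mathcal{O}}\hat x=0$, hence $K_{\mathcal{O}}(x-\hat x)=K_{\mathcal{O}}x$ and $(K_{\mathcal{O}}(x-\hat x))|\Theta=x|\Theta-\Lebesgue(\Theta)^{-1}\int_\Theta x\,d\Lebesgue$. The Poincaré--Wirtinger inequality for $\BVspace$ on the Lipschitz domain $\Theta$ gives $c_\Theta>0$ with $\norm{(K_{\mathcal{O}}(x-\hat x))|\Theta}_{L^2(\Theta)}\le c_\Theta\,\abs{Dx}(\Theta)$; squaring, using $\abs{Dx}(\Theta)\le\norm{Dx}_\Meas\le C$ to linearise, and summing over $\mathcal{O}$ gives
\[
    \norm{K_{\mathcal{O}}(x-\hat x)}_{L^2(\Union_{\Theta\in\mathcal{O}}\Theta)}^2\le\Bigl(\max_{\Theta\in\mathcal{O}}c_\Theta^2\Bigr)C\sum_{\Theta\in\mathcal{O}}\abs{Dx}(\Theta)\le\frac{(\max_{\Theta\in\mathcal{O}}c_\Theta^2)\,C}{\eta}\Bigl(\norm{Dx}_\Meas-\norm{D\hat x}_\Meas-\dualprod{\hat d}{x-\hat x}\Bigr).
\]
Rearranging gives the claim with $\epsilon\defeq\eta/\max_{\Theta\in\mathcal{O}}c_\Theta^2>0$, which depends only on $\mathcal{O}$ and $\hat\phi$, hence only on $\mathcal{O}$ and $\hat d$.

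The step I expect to be the real obstacle is the Poincaré inequality in $L^2$: $\norm{u-u_\Theta}_{L^2(\Theta)}\le c\,\abs{Du}(\Theta)$ is immediate from the $\BVspace$--Sobolev embedding when the spatial dimension is at most $2$ (the image-reconstruction case), but in dimension $\ge3$ one only has $\BVspace(\Theta)\hookrightarrow L^{m/(m-1)}(\Theta)$, so one must additionally exploit that $x-\hat x\in L^2(\Omega)$ — interpolating against a higher exponent, or using that the neighbourhoods $U$ arising in the applications are also bounded in $L^2$. A more routine point is justifying the Gauss--Green identity with no boundary contribution, which is exactly what the pairing/subdifferential characterisation behind \eqref{eq:tv:subdiff} encodes, together with the elementary facts that $K_{\mathcal{O}}\hat x=0$ and that $(\hat\phi,Dx)$ restricts to Borel subsets with the stated bound.
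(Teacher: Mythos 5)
Your proposal is correct and follows essentially the same route as the paper's proof: the Anzellotti pairing/Gauss--Green identity reduces the left-hand side to $\int_\Omega(1-\abs{\hat\phi})\,d\abs{Dx}$, strict flatness gives the lower bound $\eta\sum_{\Theta}\abs{Dx}(\Theta)$, and the Poincar\'e inequality on each $\Theta$ together with the bound $\norm{Dx}_{\Meas}\le C$ converts this linear quantity into the squared $L^2$ oscillation, yielding the same kind of constant $\epsilon$. The caveat you flag about the $L^2$ Poincar\'e inequality in dimension $\ge 3$ applies equally to the paper's own proof (which invokes the same inequality), so it is not a gap of your argument relative to the paper; it is in fact a fair observation about the stated generality.
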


\begin{proof}
    Observing \eqref{eq:tv:subdiff}, we have
    \begin{equation}
        \label{eq:tv:dx-est}
        \begin{aligned}[t]
        \norm{Dx}_{\Meas}
        -\norm{D\hat x}_{\Meas}
        -
        \dualprod{\hat d}{x-\hat x}
        &
        =
        \int_\Omega \d \abs{Dx}-\int_\Omega \d \abs{D\hat x}
        -\int_\Omega \hat\phi d(Dx-D\hat x)
        \\
        &
        \ge
        \int_\Omega 1-\abs{\hat \phi} \d\abs{Dx}.
        \end{aligned}
    \end{equation}
    We recall that Poincaré's inequality \cite[Remark 3.50]{ambrosio2000fbv} establishes for some constants $C_\Theta>0$ that
    \[
        \norm{K_{\{\Theta\}}x}_{L^2(\Theta)} \le C_\Theta \norm{Dx}_{\Meas(\Theta)}
        \quad (\Theta \in \mathcal{O},\, x \in L^2(\Omega)).
    \]
    Since, by assumption, $\abs{D\hat x}(\Theta)=0$ and $1-\abs{\hat \phi} \ge \epsilon_{\Theta,\hat \phi} > 0$ for some $\epsilon_{\Theta,\hat\phi}>0$ for every $\Theta \in \mathcal{O}$, we therefore obtain
    \begin{equation}
        \label{eq:tv:strict-est}
        \int_\Omega 1-\abs{\hat \phi} \d\abs{Dx}
        \ge
        \sum_{\Theta \in \mathcal{O}}
        \epsilon_{\Theta,\hat\phi} \int_\Theta \d\abs{Dx}
        \ge
        \sum_{\Theta \in \mathcal{O}}
        \inv C_\Theta \epsilon_{\Theta, \hat\phi} \norm{K_{\{\Theta\}} x}_{L^2(\Theta)}.
    \end{equation}
    (In particular, if $m=2$ and $\Theta=\B(\xi, \rho)$, we have $C_\Theta=C$ independent of $\xi$ and $\rho$.)
    On the other hand, due to \eqref{eq:tv:neigh}, Poincaré's inequality, and $K_{\{\Theta\}} \hat x =0$, we also have
    \[
        C
        \ge \norm{D x}_{\Meas(\Omega; \R^m)}
        \ge \norm{D x}_{\Meas(\Theta); \R^m)}
        \ge \inv C_\Theta\norm{K_{\{\Theta\}} x}_{L^2(\Theta)}
        = \inv C_\Theta\norm{K_{\{\Theta\}} (x-\hat x)}_{L^2(\Theta)}.
    \]
    Combining this estimate with \eqref{eq:tv:dx-est} and \eqref{eq:tv:strict-est} yields the claimed estimate with $\epsilon=\inf_{\Theta \in \mathcal{O}} \epsilon_{\Theta,\hat\phi}/C_\Theta^2$.
    Since we assumed $\mathcal{O}$ to be finite, we have $\epsilon>0$.
\end{proof}

\begin{remark}
    If the constant $C_\Theta$ from Poincaré's inequality is bounded over all $\Theta \in \mathcal{O}$, and we have $\sup_{\Theta \in \mathcal{O}\, \xi \in \Theta} \abs{\hat\phi(\xi)} < 1$, then the finiteness assumption on $\mathcal{O}$ can be dropped.
\end{remark}

A discretised version of the next lemma on total variation can be found in \cite[Appendix A]{jauhiainen2019gaussnewton}. It says that with regard to strong metric subregularity, the lack of positivity of $A^*A$ can be compensated for by the strictly flat areas.

\begin{lemma}
    \label{lemma:tv:strong-subreg}
    Let $F_\delta(x) \defeq \frac{1}{2}\norm{Ax-b_\delta}_{2}^2 + \alpha_\delta\norm{Dx}_{\Meas}$ on $X=L^2(\Omega)$ with $\Omega \subset \R^n$ and $\alpha_\delta \in (0, 1/2)$.
    Suppose:
    \begin{enumerate}[label=(\roman*)]
        \item $\hat x \in L^2(\Omega) \isect \BVspace(\Omega)$, and $\hat d \in L^2(\Omega)$, and $\hat \phi \in L^\infty(\Omega; \R^m)$ satisfy \eqref{eq:tv:subdiff}.
        \item There exists a a finite collection $\mathcal{O}$ of strictly flat areas for $\hat x$ such that
        \begin{equation}
            \label{eq:tv:ellipticity}
            K_{\mathcal{O}}^*K_{\mathcal{O}} + A^*A \ge \alt\epsilon\Id
            \text{ for some } \alt\epsilon>0.
        \end{equation}
        \item $U \subset L^2(\Omega)$ satisfies \eqref{eq:tv:neigh} for some $C>0$.
    \end{enumerate}
    Then $F$ is strongly locally subdifferentiable in $U$ at $\hat x$ for $\hat x^* \defeq A^*(A\hat x-b_\delta) + \alpha_\delta \hat d \in \subdiff F_\delta(\hat x)$  with respect to the norm $\norm{\freevar}_\delta$ with $\gamma_\delta=\alpha_\delta$. The factor $\kappa$ (or $\gamma$) of strong metric subregularity is independent of $\delta$.
\end{lemma}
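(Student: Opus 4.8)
The plan is to reduce the claim, via \cref{lemma:linear:to-prove:strong}, to a single elementary coercivity inequality, and then to obtain that inequality by feeding the Poincaré-type estimate of \cref{lemma:tv:technical} into the ellipticity hypothesis \eqref{eq:tv:ellipticity}.

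First I would record that $\hat x^* = A^*(A\hat x-b_\delta)+\alpha_\delta\hat d = J_\delta'(\hat x)+\alpha_\delta\hat d$ lies in $\subdiff F_\delta(\hat x)$: by hypothesis~(i) and \eqref{eq:tv:subdiff}, $\hat d \in L^2(\Omega) \isect \subdiff\norm{D\freevar}_{\Meas}(\hat x) = \subdiff R(\hat x)$, so $\hat x^* \in \subdiff J_\delta(\hat x)+\alpha_\delta\subdiff R(\hat x) \subseteq \subdiff F_\delta(\hat x)$. Then, by \cref{lemma:linear:to-prove:strong} with $\gamma_\delta=\alpha_\delta$ and $R$ the total variation functional, it suffices to produce a $\gamma \in (0,\tfrac12)$, independent of $\delta$, such that, writing $z \defeq x-\hat x$,
\[
    \alpha_\delta\bigl[\norm{Dx}_{\Meas}-\norm{D\hat x}_{\Meas}-\dualprod{\hat d}{z}\bigr]
    +\Bigl(\tfrac12-\gamma\Bigr)\norm{Az}_2^2
    \ge \gamma\alpha_\delta\norm{z}_2^2
    \qquad (x \in U).
\]

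Since hypothesis~(ii) supplies a finite collection $\mathcal{O}$ of strictly flat areas for $\hat x$ at $\hat\phi$ and hypothesis~(iii) is exactly \eqref{eq:tv:neigh}, \cref{lemma:tv:technical} yields an $\epsilon=\epsilon(\mathcal{O},\hat d)>0$ with
\[
    \norm{Dx}_{\Meas}-\norm{D\hat x}_{\Meas}-\dualprod{\hat d}{z}
    \ge \frac{\epsilon}{C}\,\norm{K_{\mathcal{O}}z}_{L^2(\Union_{\Theta\in\mathcal{O}}\Theta)}^2
    = \frac{\epsilon}{C}\,\iprod{K_{\mathcal{O}}^*K_{\mathcal{O}}z}{z}
    \qquad (x \in U).
\]
Using $\norm{Az}_2^2=\iprod{A^*Az}{z}$, the left side of the target inequality is thus at least $\alpha_\delta\tfrac{\epsilon}{C}\iprod{K_{\mathcal{O}}^*K_{\mathcal{O}}z}{z}+(\tfrac12-\gamma)\iprod{A^*Az}{z}$. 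Put $c \defeq \min\{\epsilon/C,\tfrac12\} \in (0,\tfrac12]$ and $\gamma \defeq \min\{c\alt\epsilon,(1-c)/2\}$; both are $\delta$-independent, and $\gamma \in (0,\tfrac12)$. Since $K_{\mathcal{O}}^*K_{\mathcal{O}}$ and $A^*A$ are positive semidefinite, $\alpha_\delta\epsilon/C \ge \alpha_\delta c$, and $\tfrac12-\gamma \ge \tfrac c2 \ge \alpha_\delta c$ (using $\alpha_\delta<\tfrac12$), this lower bound dominates $\alpha_\delta c\iprod{(K_{\mathcal{O}}^*K_{\mathcal{O}}+A^*A)z}{z}$; invoking \eqref{eq:tv:ellipticity} and then $c\alt\epsilon \ge \gamma$ gives $\ge \alpha_\delta c\alt\epsilon\norm{z}_2^2 \ge \gamma\alpha_\delta\norm{z}_2^2$, the target inequality. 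In particular $\gamma$ (equivalently $\kappa=1/\gamma$) is independent of $\delta$.

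The computation is short once the two cited lemmas are in hand; the only point that demands care is this last bookkeeping step, where the coefficient $\tfrac12-\gamma$ of $\norm{A(x-\hat x)}_2^2$ must absorb $\alpha_\delta$ uniformly over $\alpha_\delta \in (0,\tfrac12)$. This is exactly where the standing restriction $\alpha_\delta<1/2$ is used, and it is what forces $c$ — hence the subregularity factor $\gamma$ — to be bounded by $1$ and chosen independently of $\delta$. Everything else is a direct assembly of \cref{lemma:linear:to-prove:strong} and \cref{lemma:tv:technical} together with the ellipticity \eqref{eq:tv:ellipticity}.
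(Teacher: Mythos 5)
Your proposal is correct and follows essentially the same route as the paper: reduce via \cref{lemma:linear:to-prove:strong} to the inequality \eqref{eq:tv:main-est}, bound the total-variation Bregman term by \cref{lemma:tv:technical}, and then combine with the ellipticity \eqref{eq:tv:ellipticity} while using $\alpha_\delta<1/2$ to choose a $\delta$-independent $\gamma$. Your only difference is that you make the choice of $\gamma$ explicit (via $c$ and $\gamma=\min\{c\alt\epsilon,(1-c)/2\}$), where the paper simply takes $\gamma>0$ small enough; the bookkeeping checks out.
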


\begin{proof}
    By \cref{lemma:linear:to-prove:strong} we need to prove
    \begin{equation}
        \label{eq:tv:main-est}
        L \defeq
        \alpha_\delta\left(
            \norm{Dx}_{\Meas}
            -\norm{D\hat x}_{\Meas}
            - \dualprod{\hat d}{x-\hat x}
        \right)
        +\left(\frac{1}{2}-\gamma\right)\norm{A(x-\hat x)}_2^2
        \ge
        \gamma\gamma_\delta\norm{x-\hat x}_X^2
        \quad (x \in U).
    \end{equation}
    \Cref{lemma:tv:technical} provides for some $\epsilon=\epsilon(\Theta, \hat d)$ the estimate
    \[
        \norm{Dx}_{\Meas}
        -\norm{D\hat x}_{\Meas}
        -\dualprod{\hat d}{x-\hat x}
        \ge
        \frac{\epsilon}{C} \norm{K_{\mathcal{O}}(x-\hat x)}_{L^2(\Theta)}^2.
    \]
    This and \eqref{eq:tv:ellipticity} yield
    \[
        L \ge
        \min\left\{\frac{\epsilon\alpha_\delta}{C}, \frac{1}{2}-\gamma\right\}\alt\epsilon\norm{x-\hat x}_2^2.
    \]
    Since we assume that $\alpha_\delta \in (0, 1/2)$, taking $\gamma_\delta=\alpha_\delta$ and $\alpha_\delta  \le \frac12 - \gamma$ with $\gamma \in (0, \frac12)$, we now prove \eqref{eq:tv:main-est} for small enough $\gamma>0$ independent of $\delta>0$.
\end{proof}

Provided the strictly flat areas in the ground-truth compensate for the kernel of the forward operator, we can now show the convergence of total variation regularised approximate solutions:

\begin{theorem}[Total variation regularised image reconstruction]
    \label{thm:tv:main}
    Let $R(x)=\norm{Dx}_{\Meas(\Omega; \R^m)}$ in $X=L^2(\Omega)$ and suppose \cref{ass:linear:main} holds.
    Also suppose $\hat x, \hat d \in L^2(\Omega)$ satisfy \eqref{eq:tv-basic-source-cond} and that there exists a corresponding collection $\mathcal{O}$ of strictly flat areas satisfying \eqref{eq:tv:ellipticity}.
    Also suppose that the accuracy and regularisation parameters satisfy
    \[
        \lim_{\delta \downto 0} \left(\alpha_\delta, \frac{\delta^2}{\alpha_\delta}, \frac{e_\delta}{\alpha_\delta}\right)=0.
    \]
    Then $\norm{x_\delta-\hat x}_2 \downto 0$.
    Moreover, \eqref{eq:linear:strong-subreg:estimate} holds for small enough $\delta>0$.
\end{theorem}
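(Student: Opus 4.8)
The plan is to obtain both claims directly from \cref{thm:linear:strong-subreg} and \cref{cor:linear:strong-subreg}; the only real work is to verify the \emph{strong} source condition of \cref{ass:linear:strong-source-condition} at $\hat x$, with the choice $\gamma_\delta = \alpha_\delta$ throughout. First I would note that \cref{ass:linear:main} is in force by hypothesis (with $X = L^2(\Omega)$ and $R(x) = \norm{Dx}_\Meas$, which is convex, proper and lower semicontinuous on $L^2(\Omega)$), and that the basic source condition \eqref{eq:linear:basic-source-condition} follows from the assumed \eqref{eq:tv-basic-source-cond}: writing $\hat d = -A^*\hat w$ turns $\hat d \in \range A^* \isect \subdiff R(\hat x)$ into $A^*\hat w + \subdiff R(\hat x) \ni 0$ together with $A\hat x = \hat b$, and moreover $J_\delta'(\hat x) - \alpha_\delta A^*\hat w = A^*(A\hat x - b_\delta) + \alpha_\delta \hat d$, which is exactly the covector at which \cref{lemma:tv:strong-subreg} produces strong local subdifferentiability.

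Next I would supply a neighbourhood $U \ni \hat x$ that is at once admissible in \cref{lemma:tv:strong-subreg}, i.e.\ satisfies the bounded-total-variation condition \eqref{eq:tv:neigh}, and large enough to contain the set $U_\rho$ of \eqref{eq:linear:strong-subreg:u-cond}. The observation that makes this painless is that $U_\rho$ itself already works: since $R(\hat x) < \infty$, for every $x \in U_\rho$ one has $\norm{Dx}_\Meas = R(x) \le R(\hat x) + \rho =: C < \infty$, so $U_\rho$ obeys \eqref{eq:tv:neigh} with this $C$. Fixing any $\rho > 0$ and setting $U = U_\rho$, the hypotheses (i)--(iii) of \cref{lemma:tv:strong-subreg} reduce to exactly \eqref{eq:tv:subdiff}, \eqref{eq:tv:ellipticity} and \eqref{eq:tv:neigh}, all assumed; hence for every $\delta$ with $\alpha_\delta \in (0, 1/2)$ the functional $J_\delta + \alpha_\delta R$ is strongly locally subdifferentiable at $\hat x$ for $J_\delta'(\hat x) - \alpha_\delta A^*\hat w$ with respect to $\norm{\freevar}_\delta$, with $\gamma_\delta = \alpha_\delta$ and a factor $\gamma > 0$ independent of $\delta$. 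I would stress why $\gamma$ is $\delta$-uniform: the quantities $C$, $\epsilon$ and $\alt\epsilon$ in the proof of \cref{lemma:tv:strong-subreg} do not depend on $\delta$, and the requirement $\min\{\epsilon\alpha_\delta/C,\, \tfrac12 - \gamma\}\alt\epsilon \ge \gamma\alpha_\delta$ can be met by a single $\gamma > 0$ uniformly over $\alpha_\delta \in (0, 1/2)$. Since $\alpha_\delta \downto 0$, the restriction $\alpha_\delta \in (0,1/2)$ holds for all small $\delta$, which is all that \cref{thm:linear:strong-subreg,cor:linear:strong-subreg} need; this establishes \cref{ass:linear:strong-source-condition}.

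To finish I would check the parameter hypotheses: $\lim_{\delta\downto 0}(\alpha_\delta, \delta^2/\alpha_\delta, e_\delta/\alpha_\delta) = 0$ gives $\alpha_\delta \downto 0$ and $(e_\delta + \delta^2)/\alpha_\delta \downto 0$, so \cref{thm:linear:strong-subreg} applies and yields \eqref{eq:linear:strong-subreg:estimate} for $\delta$ small enough (the membership $x_\delta \in U_\rho$ needed there being automatic from \cref{lemma:linear:a-convergence} and this same limit), while with $\gamma_\delta = \alpha_\delta$ the limit is precisely condition \eqref{eq:linear:strong-subreg:convergence-cond}, so \cref{cor:linear:strong-subreg} delivers $\norm{x_\delta - \hat x}_2 \to 0$. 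I expect the main obstacle to be nothing more than this bookkeeping — confirming that the neighbourhood forced by \eqref{eq:linear:strong-subreg:u-cond} is admissible in \cref{lemma:tv:strong-subreg} and that the resulting subregularity constant is $\delta$-independent — because the substantive geometric content, namely the Poincaré estimates showing that the strictly flat areas compensate for $\ker A$, has already been carried out in \cref{lemma:tv:technical,lemma:tv:strong-subreg}.
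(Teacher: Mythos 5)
Your proposal is correct and follows essentially the same route as the paper's proof: take $U=U_\rho$, note it satisfies \eqref{eq:tv:neigh} with $C=\norm{D\hat x}_{\Meas}+\rho$, invoke \cref{lemma:tv:strong-subreg} with $\gamma_\delta=\alpha_\delta$ and a $\delta$-independent $\gamma$ to verify the strong source condition, and conclude via \cref{thm:linear:strong-subreg,cor:linear:strong-subreg}. The extra remarks you add (the explicit uniformity bound $\min\{\epsilon\alpha_\delta/C,\tfrac12-\gamma\}\alt\epsilon\ge\gamma\alpha_\delta$ and the observation that $\alpha_\delta\in(0,\tfrac12)$ only needs to hold for small $\delta$) are fine and just make explicit what the paper leaves implicit.
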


\begin{proof}
    For some $\rho>0$, let
    \[
        U \defeq U_\rho = \{x \in L^2(\Omega) \mid \norm{A(x-\hat x)} \le \rho,\, \norm{Dx}_{\Meas} \le \norm{D\hat x}_{\Meas} + \rho\}
    \]
    Then \eqref{eq:linear:strong-subreg:u-cond} holds as does \eqref{eq:tv:neigh} with $C \defeq \norm{D\hat x}_{\Meas} + \rho$.
    \Cref{lemma:tv:strong-subreg} now shows that $J_\delta+\alpha_\delta R$ is strongly locally subdifferentiable at $\hat x$ for $\hat x^*_\delta \defeq A^*(A\hat x-b_\delta)+\alpha_\delta \hat d$ with respect to the norm $\norm{\freevar}_\delta$ with $\gamma_\delta=\alpha_\delta$ and with the parameter $\gamma>0$ independent of $\delta>0$.
    Since \eqref{eq:tv-basic-source-cond} verifies the basic source condition \eqref{eq:linear:basic-source-condition}, this verifies the strong source condition of \cref{ass:linear:strong-source-condition}.
    Hence we may apply \cref{thm:linear:strong-subreg,cor:linear:strong-subreg} to deduce the claim.
\end{proof}

\section{Nonlinear inverse problems and general discrepancies}
\label{sec:nonlinear}

We now consider the nonlinear inverse problem $A(\hat x)= \hat b$, which for corrupted data $b_\delta$ we solve via
\begin{equation*}
    \min_{x \in X} J_\delta(x) + \alpha_\delta R(x)
    \quad\text{where}\quad
    J_\delta(x) \defeq E(A(x)-b_\delta)
\end{equation*}
for some convex, proper, lower semicontinuous regularisation functional $R: X \to \extR$ and a data fidelity $E: X \to \R$. We assume  the forward operator $A \in C^1(X; Y)$ on the Banach spaces $X$ and $Y$.
We will shortly impose assumptions on $E$.

\subsection{A basic source condition and general assumptions}

We again take as our starting point for admissible ground-truths $\hat x$ those that minimise $R$, i.e., solve the problem
\begin{equation}
    \label{eq:nonlinear:minimum-norm}
    \min_{A(x)=\hat b} R(x).
\end{equation}
We use the theory of Clarke subdifferentials on extended real-valued functions \cite[Section 2.9]{clarke1990optimization} to write the necessary optimality conditions for $\hat x$ to solve this problem.
Indeed, by the Fermat principle, $0 \in J(\hat x)$ for $J(x) \defeq R(x)+\delta_{\{x: A(x)=\hat b\}}$.
Since $A \in C^1(X; Y)$ is regular in the sense of Clarke's theory, as is $R$ as a convex function, the sum and composition rules \cite[Theorems 2.9.8 \& 2.9.9]{clarke1990optimization} hold as equalities for $J$. Thus the Fermat principle writes out as $\hat x$ having to satisfy for some $\hat w \in Y^*$ the \term{basic source condition}
\begin{equation}
    \label{eq:nonlinear:basic-source-condition}
    A'(\hat x)=\hat b
    \quad\text{and}\quad
    A'(\hat x)^*\hat w + \subdiff R(\hat x) \ni 0.
\end{equation}
We write $\hat X$ for the set of $\hat x$ satisfying \eqref{eq:nonlinear:basic-source-condition}.
This set may be larger than the set of minimisers of \cref{eq:nonlinear:minimum-norm}.

We now state our main assumption regarding accuracy and how $E$ relates to the noise parameter $\delta>0$.
Essentially, the magnitude of $E'$ has to be compatible with the parameterisation $\delta$ of the corruption level, and the convergence below zero of $E$ has to be Hölderian as the corruption in the data vanishes.

\begin{assumption}[Corruption level and solution accuracy]
    \label{ass:nonlinear:energy-noise}
    On Banach spaces $X$ and $Y$, $A \in C^1(X; Y)$, and $R: X \to \extR$ is convex, proper, and lower semicontinuous, and $E: Y \to \R$ is convex and Fréchet differentiable.
    For given accuracy parameters $e_\delta \ge 0$ and all $\hat x \in \hat X$ we have
    \begin{equation}
        \label{eq:nonlinear:accuracy}
        [J_\delta + \alpha_\delta R](x_\delta) - [J_\delta+\alpha_\delta R](\hat x) \le e_\delta
        \quad (\delta > 0).
    \end{equation}
    Moreover, the parametrisation $\delta>0$ of the noise or corruption level is such that
    \[
        \norm{E'(b_\delta-\hat b)}_{Y^*} \le \delta.
    \]
    and, for some $C',q>0$,
    \[
        E(\hat b-b_\delta) \le C'\delta^q
    \]
    Finally, for some $C, p>0$ the function $E$ satisfies the pseudo-Hölder estimate
    \begin{equation}
        \label{eq:nonlinear:e-triangle}
        \inv C E(z) \le E(w)+\norm{E'(z-w)}_Y^p
        \quad (z, w \in Y).
    \end{equation}
\end{assumption}

\begin{example}
    \label{ex:nonlinear:l2}
    Let $E(z) \defeq \frac{1}{2}\norm{z}_Y^2$ on a Hilbert space $Y$. Then $E'(z-w)=z-w$ so by \cref{ass:nonlinear:energy-noise} the noise level has to satisfy
    \[
        \norm{b_\delta-\hat b}_Y \le \delta
    \]
    and with this we can take $q=2$ and $C'=1/2$.
    Also \eqref{eq:nonlinear:e-triangle} holds with $C=3$ and $p=2$.
    Indeed
    \[
        \begin{aligned}
        \frac{1}{2C}\norm{z}_Y^2
        &
        = \frac{1}{2C}\norm{w-(z-w)}_Y^2
        \\
        &
        \le
        \left(\frac{1}{2C}+\frac{C-1}{2C}\right)\norm{z}_Y^2
        +\left(\frac{1}{2C}+\frac{C}{2(C-1)}\right)\norm{z-w}_Y^2
        \le \frac{1}{2}\norm{w}_Y^2 + \norm{z-w}_Y^2.
        \end{aligned}
    \]
\end{example}

\subsection{An estimate based on a strong source condition}

We modify the strong source condition of \cref{ass:linear:strong-source-condition} for nonlinear $A$.

\begin{assumption}[Strong source condition; nonlinear case]
    \label{ass:nonlinear:strong-source-condition}
    Assume that $\hat x \in X$ satisfies for some $\hat w \in Y$ the basic source condition  \eqref{eq:nonlinear:basic-source-condition}.
    Moreover, for all $\delta>0$, for given $\alpha_\delta,\gamma_\delta>0$, assume that $J_\delta+\alpha_\delta R$ is strongly locally subdifferentiable at $\hat x$ for $J_\delta'(\hat x)-\alpha_\delta A'(\hat x)^*\hat w$ with respect to the norm
    \[
        \norm{x}_\delta \defeq \sqrt{\norm{A'(\hat x)x}_Y^2 + \gamma_\delta \norm{x}_X^2} \quad (x \in X).
    \]
    The factor $\gamma>0$ of strong local subdifferentiability must be independent of $\delta>0$ and, for some $\rho>0$, we must have
    \begin{equation}
        \label{eq:nonlinear:strong-subreg:u-cond}
        U \supset U_\rho \defeq \{x \in X \mid \norm{A(x)-A(\hat x)} \le \rho,\, R(x) \le R(\hat x) + \rho\}.
    \end{equation}
    Then we say that $\hat x$ satisfies for $\hat w$ the \term{strong source condition}.
\end{assumption}

We again start with a simple bound:

\begin{lemma}
    \label{lemma:nonlinear:a-convergence}
    Suppose \cref{ass:nonlinear:energy-noise} holds.
    Then
    \[
        E(A(x_\delta)-A(\hat x)) \le C(e_\delta + C'\delta^q + \delta^p + \alpha_\delta R(\hat x))
    \]
    and
    \[
        R(x_\delta) \le R(\hat x) + \frac{e_\delta+C'\delta^q + \delta^p}{\alpha_\delta}.
    \]
\end{lemma}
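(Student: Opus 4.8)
The plan is to follow exactly the pattern of \cref{lemma:linear:a-convergence}: squeeze both bounds out of the single accuracy inequality \eqref{eq:nonlinear:accuracy}, using the pseudo-Hölder estimate \eqref{eq:nonlinear:e-triangle} in the role played by Young's inequality in the linear case, i.e.\ to pass between the data term evaluated at the corrupted data $b_\delta$ and at the true data $\hat b$. First I would record that $\hat x\in\hat X$ forces $A(\hat x)=\hat b$, so that $J_\delta(\hat x)=E(A(\hat x)-b_\delta)=E(\hat b-b_\delta)\le C'\delta^q$ by \cref{ass:nonlinear:energy-noise}. Substituting this into \eqref{eq:nonlinear:accuracy} gives the master inequality
\[
    E(A(x_\delta)-b_\delta)+\alpha_\delta R(x_\delta)\le e_\delta+C'\delta^q+\alpha_\delta R(\hat x),
\]
from which both claims will follow.

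Next I would apply \eqref{eq:nonlinear:e-triangle} with $z=A(x_\delta)-\hat b=A(x_\delta)-A(\hat x)$ and $w=A(x_\delta)-b_\delta$, so that $z-w=b_\delta-\hat b$; together with $\norm{E'(b_\delta-\hat b)}_{Y^*}\le\delta$ this yields
\[
    \inv C\, E(A(x_\delta)-A(\hat x))\le E(A(x_\delta)-b_\delta)+\delta^p .
\]
For the first claim I would discard the nonnegative term $\alpha_\delta R(x_\delta)$ from the master inequality (using $R\ge0$, exactly as is implicitly done in \cref{lemma:linear:a-convergence}) to get $E(A(x_\delta)-b_\delta)\le e_\delta+C'\delta^q+\alpha_\delta R(\hat x)$, insert this into the previous display, and multiply by $C$. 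For the second claim I instead need to bound the data term from below: since $E\ge0$ (which is built into \eqref{eq:nonlinear:e-triangle} — the case $w=z$ reads $\inv C E(z)\le E(z)$ — or is part of the standing hypotheses on the discrepancy), the displayed inequality gives $E(A(x_\delta)-b_\delta)\ge-\delta^p$; substituting this into the master inequality and dividing by $\alpha_\delta$ produces $R(x_\delta)\le R(\hat x)+(e_\delta+C'\delta^q+\delta^p)/\alpha_\delta$.

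I do not expect any serious obstacle: once the two arguments of \eqref{eq:nonlinear:e-triangle} are chosen so that their difference is precisely the noise $b_\delta-\hat b$ — the quantity whose image under $E'$ the corruption level $\delta$ controls — the rest is a couple of lines. The only steps that need a little care are the sign bookkeeping (that $E\ge0$ and $R\ge0$ let us drop, respectively, the data term in the $R$-estimate and the regularisation term in the $E$-estimate) and keeping track of which of $\delta^q$, $\delta^p$, $e_\delta$, $\alpha_\delta R(\hat x)$ contributes to which bound.
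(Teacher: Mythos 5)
Your proposal is correct and follows essentially the same route as the paper: the paper also applies the pseudo-Hölder estimate \eqref{eq:nonlinear:e-triangle} with exactly your choice $z=A(x_\delta)-\hat b$, $w=A(x_\delta)-b_\delta$, combines it with the accuracy inequality \eqref{eq:nonlinear:accuracy} and the bounds $E(\hat b-b_\delta)\le C'\delta^q$, $\norm{E'(b_\delta-\hat b)}\le\delta$ into the single estimate $\inv C E(A(x_\delta)-A(\hat x))+\alpha_\delta R(x_\delta)\le e_\delta+C'\delta^q+\delta^p+\alpha_\delta R(\hat x)$, and then drops the respective nonnegative term for each claim, just as you do. Your bookkeeping of the implicit nonnegativity of $E$ and $R$ matches what the paper itself uses tacitly, so there is no gap relative to the paper's argument.
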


\begin{proof}
    By \cref{ass:nonlinear:energy-noise} and \eqref{eq:nonlinear:accuracy}, since $A(\hat x)=\hat b$, we have
    \[
        \begin{aligned}
        \inv C E(A(x_\delta)-A(\hat x))
        + \alpha_\delta R(x_\delta)
        &
        \le
        E(A(x_\delta)-b_\delta)
        + \alpha_\delta R(x_\delta)
        + \norm{E'(b_\delta-\hat b)}_Y^p
        \\
        &
        \le
        E(\hat b-b_\delta) + \alpha_\delta R(\hat x)
        + e_\delta
        + \norm{E'(b_\delta-\hat b)}_Y^p
        \\
        &
        \le e_\delta + C'\delta^q + \delta^p + \alpha_\delta R(\hat x).
        \end{aligned}
    \]
    This finishes the proof.
\end{proof}

The next result generalises \cref{thm:linear:strong-subreg} to approximately linear operators $A$. The claim only differs by the factor $\eta$, which becomes the unit if $A$ is linear.

\begin{theorem}
    \label{thm:nonlinear:strong-subreg}
    Suppose \cref{ass:nonlinear:energy-noise} holds along with the strong source condition of \cref{ass:nonlinear:strong-source-condition} at some $\hat x$ for $\hat w$.
    Suppose $(e_\delta+\delta^q + \delta^p)/\alpha_\delta \downto 0$ and $\alpha_\delta \downto 0$ as $\delta \downto 0$.
    Then there exists $\delta>0$ such that if  $\delta \in (0, \bar\delta)$, we have
    \begin{equation}
        \label{eq:nonlinear:strong-subreg:estimate}
        \frac{1}{2}\norm{x_\delta-\hat x}_X^2
        \le
        \frac{e_\delta}{\gamma\gamma_\delta}
        +\frac{\delta^2}{2\gamma^2\gamma_\delta}
        +\frac{\alpha_\delta^2}{2\gamma^2\gamma_\delta}\norm{\hat w}_Y^2.
    \end{equation}
\end{theorem}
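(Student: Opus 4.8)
The plan is to transcribe the proof of \cref{thm:linear:strong-subreg}, with the linearisation $A'(\hat x)$ playing the role of $A$ and the covector $E'(\hat b-b_\delta)\in Y^*$ playing the role of the residual $\hat b-b_\delta$. By the chain rule and $A(\hat x)=\hat b$ one has $J_\delta'(\hat x)=A'(\hat x)^*E'(A(\hat x)-b_\delta)=A'(\hat x)^*E'(\hat b-b_\delta)$, so, using the basic source condition \eqref{eq:nonlinear:basic-source-condition},
\[
    J_\delta'(\hat x)-\alpha_\delta A'(\hat x)^*\hat w=A'(\hat x)^*\bigl(E'(\hat b-b_\delta)-\alpha_\delta\hat w\bigr),
\]
whose pairing against $x_\delta-\hat x$ equals $\dualprod{E'(\hat b-b_\delta)-\alpha_\delta\hat w}{A'(\hat x)(x_\delta-\hat x)}$. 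This is exactly the quantity that the $\norm{A'(\hat x)\,\freevar}_Y^2$-component of the norm $\norm{\freevar}_\delta$ in \cref{ass:nonlinear:strong-source-condition} is built to absorb.

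The one genuinely new point, and the main obstacle, is to place $x_\delta$ in the neighbourhood $U$ of \cref{ass:nonlinear:strong-source-condition}. For this I would use \cref{lemma:nonlinear:a-convergence}, which gives $E(A(x_\delta)-A(\hat x))\le C(e_\delta+C'\delta^q+\delta^p+\alpha_\delta R(\hat x))$ and $R(x_\delta)\le R(\hat x)+(e_\delta+C'\delta^q+\delta^p)/\alpha_\delta$. Under the hypotheses $\alpha_\delta\downto 0$ and $(e_\delta+\delta^q+\delta^p)/\alpha_\delta\downto 0$ the first bound tends to $0$ and the second to $R(\hat x)$. Unlike the linear case, where \cref{lemma:linear:a-convergence} controls $\norm{A(x_\delta-\hat x)}_Y$ directly, here I must pass from smallness of the discrepancy value $E(A(x_\delta)-A(\hat x))$ to smallness of $\norm{A(x_\delta)-A(\hat x)}_Y$, relying on the growth of $E$ near the origin, and then read off from \eqref{eq:nonlinear:strong-subreg:u-cond} that $x_\delta\in U_\rho\subset U$ once $\delta<\bar\delta$ for a suitably small $\bar\delta>0$.

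With $x_\delta\in U$, the remainder is the bookkeeping of \cref{thm:linear:strong-subreg}. Combining the accuracy bound \eqref{eq:nonlinear:accuracy} with the strong local subdifferentiability of \cref{ass:nonlinear:strong-source-condition} gives
\[
    e_\delta\ge[J_\delta+\alpha_\delta R](x_\delta)-[J_\delta+\alpha_\delta R](\hat x)\ge\dualprod{J_\delta'(\hat x)-\alpha_\delta A'(\hat x)^*\hat w}{x_\delta-\hat x}+\gamma\norm{x_\delta-\hat x}_\delta^2.
\]
I would then expand $\norm{x_\delta-\hat x}_\delta^2=\norm{A'(\hat x)(x_\delta-\hat x)}_Y^2+\gamma_\delta\norm{x_\delta-\hat x}_X^2$, substitute the first-paragraph expression for the pairing, and apply Young's inequality \eqref{eq:intro:young} to the pairing so as to offset the term $\gamma\norm{A'(\hat x)(x_\delta-\hat x)}_Y^2$, leaving $\gamma\gamma_\delta\norm{x_\delta-\hat x}_X^2$ on the positive side controlled by $e_\delta+\tfrac{1}{4\gamma}\norm{E'(\hat b-b_\delta)-\alpha_\delta\hat w}_Y^2$. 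A final Young's inequality together with $\norm{E'(\hat b-b_\delta)}_Y\le\delta$ from \cref{ass:nonlinear:energy-noise} then rearranges this to yield \eqref{eq:nonlinear:strong-subreg:estimate}; the factor $\eta$ (here $\tfrac12$) in the statement is the slack kept in the Young's split to account for the mismatch between $A$ and its linearisation $A'(\hat x)$, and it collapses to $1$ when $A$ is linear, recovering \cref{thm:linear:strong-subreg}. The first and last stages are routine; the stage deserving care is precisely the passage, via \cref{lemma:nonlinear:a-convergence}, from the discrepancy estimate to membership of $x_\delta$ in $U$.
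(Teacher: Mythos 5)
Your proposal is correct and follows essentially the same route as the paper: the same chain-rule identity $J_\delta'(\hat x)-\alpha_\delta A'(\hat x)^*\hat w=A'(\hat x)^*(E'(\hat b-b_\delta)-\alpha_\delta\hat w)$, membership $x_\delta\in U_\rho\subset U$ via \cref{lemma:nonlinear:a-convergence} and \eqref{eq:nonlinear:strong-subreg:u-cond}, and then the accuracy bound plus strong local subdifferentiability with two applications of Young's inequality (your single split of $\norm{E'(\hat b-b_\delta)-\alpha_\delta\hat w}_Y^2$ reproduces the paper's choice $\gamma'=\gamma/2$). The step you rightly flag---passing from smallness of $E(A(x_\delta)-A(\hat x))$ to $\norm{A(x_\delta)-A(\hat x)}_Y\le\rho$, which needs growth of $E$ near the origin---is also left implicit in the paper's own proof, so this is a point of extra care rather than a deviation.
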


\begin{proof}
    Since $A(\hat x)=\hat b$, we have
    \[
        J_\delta'(\hat x) - \alpha_\delta A'(\hat x)^*\hat w
        =A'(\hat x)^*E'(A(\hat x)-b_\delta)-\alpha_\delta A'(\hat x)^*\hat w
        =A'(\hat x)^*( E'(\hat b-b_\delta)-\alpha_\delta\hat w).
    \]
    Let $\rho>0$ be as in \cref{ass:nonlinear:energy-noise}.
    By the assumption that $(e_\delta+\delta^q + \delta^p)/\alpha_\delta \downto 0$ and $\alpha_\delta \downto 0$ as $\delta \downto 0$, \cref{lemma:nonlinear:a-convergence}, and \eqref{eq:nonlinear:strong-subreg:u-cond}, for suitably small $\delta>0$, we have $x_\delta \in U_\rho \subset U$.
    Hence by the accuracy estimate in \cref{ass:nonlinear:energy-noise} and the strong local subdifferentiability included in \cref{ass:nonlinear:strong-source-condition},
    \begin{equation}
        \label{eq:nonlinear:strong-subreg-est0}
        \begin{aligned}[t]
        e_\delta
        &
        \ge
        [J_\delta+\alpha_\delta R](x_\delta)-[J_\delta+\alpha_\delta R](\hat x)
        \\
        &
        \ge \dualprod{J_\delta'(\hat x) - \alpha_\delta A'(\hat x)^*\hat w}{x_\delta-\hat x}
        + \gamma\norm{x_\delta-\hat x}_\delta^2
        \\
        &
        = \iprod{E'(\hat b-b_\delta)-\alpha_\delta \hat w}{A'(\hat x)(x_\delta-\hat x)}
        + \gamma\norm{A'(\hat x)(x_\delta-\hat x)}_Y^2
        + \gamma\gamma_\delta\norm{x_\delta-\hat x}_X^2.
        \end{aligned}
    \end{equation}
    Now, for any $\gamma' \in (0, \gamma)$, using Young's inequality twice, we obtain
    \begin{equation}
        \label{eq:nonlinear:est}
        e_\delta
        \ge
        -\frac{1}{4\gamma'}\norm{E'(\hat b-b_\delta)}_Y^2
        - \frac{\alpha_\delta^2}{4(\gamma-\gamma')}\norm{\hat w}^2
        +\gamma\gamma_\delta\norm{x_\delta-\hat x}_X^2.
    \end{equation}
    Thus
    \begin{equation*}
        \norm{x_\delta-\hat x}_X^2
        \le
        \frac{e_\delta}{\gamma\gamma_\delta}
        +\frac{\delta^2}{4\gamma'\gamma\gamma_\delta}
        +\frac{\alpha_\delta^2}{4(\gamma-\gamma')\gamma\gamma_\delta}\norm{\hat w}_Y^2.
    \end{equation*}
    Taking $\gamma'=\frac{1}{2}\gamma$, this yields the claim.
\end{proof}

\begin{corollary}
    \label{cor:nonlinear:strong-subreg}
    Suppose \cref{ass:nonlinear:energy-noise} holds along with the strong source condition of \cref{ass:nonlinear:strong-source-condition} at $\hat x$.
    If
    \[
        \lim_{\delta \downto 0} \frac{1}{\min\{\alpha_\delta, \gamma_\delta\}}(\alpha_\delta^2, \delta^{\min\{2,q,p\}}, e_\delta)=0
    \]
    then
    \[
        \lim_{\delta \downto 0} \norm{x_\delta-\hat x}_X=0.
    \]
\end{corollary}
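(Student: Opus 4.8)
The plan is to read this off \cref{thm:nonlinear:strong-subreg}: first check that the stated limit forces the two scalar hypotheses of that theorem, then apply it and pass each term of \eqref{eq:nonlinear:strong-subreg:estimate} to the limit. First I would note that $\min\{\alpha_\delta,\gamma_\delta\} \le \alpha_\delta$, so the assumed convergence of $\alpha_\delta^2/\min\{\alpha_\delta,\gamma_\delta\}$ to zero forces $\alpha_\delta \le \alpha_\delta^2/\min\{\alpha_\delta,\gamma_\delta\} \downto 0$, and likewise $e_\delta/\alpha_\delta \le e_\delta/\min\{\alpha_\delta,\gamma_\delta\} \downto 0$. For $\delta$ small enough that $\delta \le 1$ we have $\delta^q \le \delta^{\min\{2,q,p\}}$ and $\delta^p \le \delta^{\min\{2,q,p\}}$, whence $(\delta^q+\delta^p)/\alpha_\delta \le 2\delta^{\min\{2,q,p\}}/\min\{\alpha_\delta,\gamma_\delta\} \downto 0$. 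Combining, $(e_\delta+\delta^q+\delta^p)/\alpha_\delta \downto 0$ and $\alpha_\delta \downto 0$, which together with the assumed \cref{ass:nonlinear:energy-noise} and the strong source condition of \cref{ass:nonlinear:strong-source-condition} are exactly the hypotheses of \cref{thm:nonlinear:strong-subreg}.

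Next I would invoke \cref{thm:nonlinear:strong-subreg} to obtain $\bar\delta>0$ with
\[
    \frac{1}{2}\norm{x_\delta-\hat x}_X^2
    \le
    \frac{e_\delta}{\gamma\gamma_\delta}
    +\frac{\delta^2}{2\gamma^2\gamma_\delta}
    +\frac{\alpha_\delta^2}{2\gamma^2\gamma_\delta}\norm{\hat w}_Y^2
    \qquad (0<\delta<\bar\delta).
\]
Here $\gamma>0$ is the $\delta$-independent factor furnished by \cref{ass:nonlinear:strong-source-condition}, so it acts as a fixed constant. Using $\gamma_\delta \ge \min\{\alpha_\delta,\gamma_\delta\}$ and, for $\delta \le 1$, $\delta^2 \le \delta^{\min\{2,q,p\}}$, each summand on the right is bounded by a fixed constant times $e_\delta/\min\{\alpha_\delta,\gamma_\delta\}$, $\delta^{\min\{2,q,p\}}/\min\{\alpha_\delta,\gamma_\delta\}$, and $\alpha_\delta^2/\min\{\alpha_\delta,\gamma_\delta\}$ respectively. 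All three vanish as $\delta \downto 0$ by the assumed limit, so $\norm{x_\delta-\hat x}_X^2 \downto 0$, and taking square roots gives $\norm{x_\delta-\hat x}_X \downto 0$.

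I do not expect any real obstacle: the argument is bookkeeping with the quantity $\min\{\alpha_\delta,\gamma_\delta\}$ together with elementary comparisons of powers of $\delta$ valid once $\delta \le 1$. The one point to keep in mind is that the subregularity factor $\gamma$ (and the neighbourhood $U$) in \cref{ass:nonlinear:strong-source-condition} is required to be independent of $\delta$, which is precisely what licenses treating it as a constant when sending the three terms of the estimate to zero; this mirrors the linear-case \cref{cor:linear:strong-subreg}.
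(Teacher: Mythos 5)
Your proposal is correct and is exactly the argument the paper leaves implicit: the corollary is a direct consequence of \cref{thm:nonlinear:strong-subreg}, obtained by checking that the stated limit implies $\alpha_\delta \downto 0$ and $(e_\delta+\delta^q+\delta^p)/\alpha_\delta \downto 0$, and then sending the three terms of \eqref{eq:nonlinear:strong-subreg:estimate} to zero using $\gamma_\delta \ge \min\{\alpha_\delta,\gamma_\delta\}$, $\delta^2 \le \delta^{\min\{2,q,p\}}$ for $\delta \le 1$, and the $\delta$-independence of $\gamma$. Your bookkeeping with $\min\{\alpha_\delta,\gamma_\delta\}$ and the power comparisons is sound, so there is nothing to add.
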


\subsection{An estimate based on a semi-strong source condition}

The following assumption and theorem adapt \cref{ass:linear:semi-strong-source-condition,lemma:linear:subreg} to non-linear $A$.

\begin{assumption}[Semi-strong source condition; nonlinear case]
    \label{ass:nonlinear:semi-strong-source-condition}
    Assume that $\hat x \in X$ satisfies for some $\hat w \in Y$ the basic source condition of \eqref{eq:nonlinear:basic-source-condition}.
    Moreover, for all $\delta>0$, for given $\alpha_\delta,\gamma_\delta>0$, assume that $J_\delta+\alpha_\delta R$ is $(A'(\hat x), \gamma_\delta)$-strongly locally subdifferentiable at $\hat x$ for $J_\delta'(\hat x)-\alpha_\delta A'(\hat x)^*\hat w$ with respect to the set $\hat X$.
    The factor $\gamma>0$ and neighbourhood $U=U^{\hat x}$ of $(A'(\hat x), \gamma_\delta)$-strong local subdifferentiability must be independent of $\delta>0$.
    Then we say that $\hat x$ satisfies for $\hat w$ the \term{semi-strong source condition}.
\end{assumption}

\begin{lemma}
    \label{lemma:nonlinear:subreg}
    Suppose \cref{ass:nonlinear:energy-noise} and the \emph{semi-strong} source condition of \cref{ass:nonlinear:semi-strong-source-condition} hold at some $\hat x \in X$ for some $\hat w$ with the neighbourhood of strong local subdifferentiability $U^{\hat x} \supset U_\rho$ for some $\rho>0$.
    Suppose $(e_\delta+\delta^q + \delta^p)/\alpha_\delta \downto 0$ and $\alpha_\delta \downto 0$ as $\delta \downto 0$.
    Then there exists $\bar\delta>0$ and  $\gamma=\gamma(\kappa)$ such that if  $\delta \in (0, \bar\delta)$ and $x_\delta \in U$, we have
    \[
        \dist^2(x_\delta, \hat X)
        \le
        \frac{e_\delta}{\gamma\gamma_\delta}
        +\frac{\delta^2}{4\gamma'\gamma\gamma_\delta}
        +\frac{\eta^2\alpha_\delta^2}{4(\gamma-\gamma')\gamma\gamma_\delta}\norm{\hat w}_Y^2.
    \]
\end{lemma}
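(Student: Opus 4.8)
The plan is to follow the proof of \cref{thm:nonlinear:strong-subreg} almost verbatim, the sole structural change being that the semi-strong source condition of \cref{ass:nonlinear:semi-strong-source-condition} supplies quadratic growth relative to the whole set $\hat X$ rather than to the single point $\hat x$; consequently $\dist^2(x_\delta,\hat X)$ takes the place of $\norm{x_\delta-\hat x}_X^2$ throughout. First I would record, using $A(\hat x)=\hat b$ and the chain rule for $J_\delta=E(A(\freevar)-b_\delta)$, that
\[
    J_\delta'(\hat x)-\alpha_\delta A'(\hat x)^*\hat w
    = A'(\hat x)^*\bigl(E'(\hat b-b_\delta)-\alpha_\delta\hat w\bigr).
\]
Next, since $(e_\delta+\delta^q+\delta^p)/\alpha_\delta\downto 0$ and $\alpha_\delta\downto 0$, \cref{lemma:nonlinear:a-convergence} gives $E(A(x_\delta)-A(\hat x))\to 0$ and $R(x_\delta)\le R(\hat x)+o(1)$, so $x_\delta\in U_\rho$ once $\delta$ is below some threshold $\bar\delta$; by the hypothesis $U^{\hat x}\supset U_\rho$ this forces $x_\delta\in U^{\hat x}$, which is precisely where the local subdifferentiability inequality is valid.

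Combining that inequality with the accuracy estimate \eqref{eq:nonlinear:accuracy} and the rewriting above reproduces the chain \eqref{eq:nonlinear:strong-subreg-est0} with $\dist^2$ in place of the squared norm:
\[
    e_\delta \ge \iprod{E'(\hat b-b_\delta)-\alpha_\delta\hat w}{A'(\hat x)(x_\delta-\hat x)}_Y
        + \gamma\norm{A'(\hat x)(x_\delta-\hat x)}_Y^2
        + \gamma\gamma_\delta\dist^2(x_\delta,\hat X).
\]
For any $\gamma'\in(0,\gamma)$, applying Young's inequality \eqref{eq:intro:young} twice---spending $\gamma'$ of the quadratic term on the $E'(\hat b-b_\delta)$ contribution and $\gamma-\gamma'$ on the $\alpha_\delta\hat w$ contribution---cancels the $\norm{A'(\hat x)(x_\delta-\hat x)}_Y^2$ term and leaves
\[
    e_\delta \ge -\frac{1}{4\gamma'}\norm{E'(\hat b-b_\delta)}_Y^2
        - \frac{\eta^2\alpha_\delta^2}{4(\gamma-\gamma')}\norm{\hat w}_Y^2
        + \gamma\gamma_\delta\dist^2(x_\delta,\hat X),
\]
where $\eta$ is the nonlinearity factor entering the estimate of the $\hat w$-term (equal to $1$ when $A$ is linear, whence one recovers \cref{lemma:linear:subreg}). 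Using the noise bound of \cref{ass:nonlinear:energy-noise} to replace $\norm{E'(\hat b-b_\delta)}_Y$ by $\delta$ and solving for $\dist^2(x_\delta,\hat X)$ yields the asserted estimate; the admissible $\gamma$ can, via \cref{thm:subreg:convex:strong}, be expressed as $\gamma=\gamma(\kappa)$ in terms of the strong metric subregularity modulus $\kappa$.

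I expect the main obstacle to be the neighbourhood bookkeeping. Unlike \cref{thm:linear:subreg:weak}, where a weak(-$*$) compactness argument is needed to pin $x_\delta$ near $\hat X$, here we must keep $x_\delta$ inside the \emph{fixed} neighbourhood $U^{\hat x}$ attached to the \emph{specific} ground-truth $\hat x$; this is exactly what the hypothesis $U^{\hat x}\supset U_\rho$ buys us, through \cref{lemma:nonlinear:a-convergence}, and it is the only genuinely delicate point---everything else is the same two-Young's-inequality manipulation as in \cref{thm:nonlinear:strong-subreg}. A secondary piece of care is carrying $\eta$ correctly through the estimate of the $\hat w$-term, since that is the single place where the nonlinearity of $A$ survives into the final constant.
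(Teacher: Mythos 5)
Your proposal matches the paper's own proof: the paper likewise first uses \cref{lemma:nonlinear:a-convergence} together with $U^{\hat x}\supset U_\rho$ to place $x_\delta$ in $U^{\hat x}$, then derives exactly the inequality $e_\delta \ge \iprod{E'(\hat b-b_\delta)-\alpha_\delta\hat w}{A'(\hat x)(x_\delta-\hat x)} + \gamma\norm{A'(\hat x)(x_\delta-\hat x)}_Y^2 + \gamma\gamma_\delta\dist^2(x_\delta,\hat X)$, and finishes with the same two applications of Young's inequality as in \cref{thm:nonlinear:strong-subreg}. The only cosmetic difference is your gloss on the factor $\eta$, which the paper's argument does not actually produce (it appears only in the statement); this does not affect the correctness of your route.
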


\begin{proof}
    By \cref{lemma:linear:a-convergence} and the assumptions that $(e_\delta+\delta^q + \delta^p)/\alpha_\delta \downto 0$ and $\alpha_\delta \downto 0$ as $\delta \downto 0$ and $U^{\hat x} \supset U_\rho$, for suitably small $\delta>0$, we have $x_\delta \in U^{\hat x}$.
    As in \eqref{eq:linear:subreg:base-est}, using the assumed $(A,\gamma_\delta)$-strong local subdifferentiability of $J_\delta+\alpha_\delta R$, analogously to \eqref{eq:nonlinear:strong-subreg-est0} we have
    \begin{equation*}
        e_\delta
        \ge
        \iprod{E'(\hat b-b_\delta)-\alpha_\delta\hat w}{A'(\hat x)(x_\delta-\hat x)}
        + \gamma\norm{A'(\hat x)(x_\delta-\hat x)}_Y^2
        + \gamma\gamma_\delta\dist^2(x_\delta, \hat X).
    \end{equation*}
    From here we proceed as in the proof of  \cref{thm:nonlinear:strong-subreg}.
\end{proof}

The next result is proved exactly as \cref{thm:linear:subreg:weak} using \cref{lemma:nonlinear:subreg} in place of \cref{lemma:linear:subreg}.

\begin{theorem}
    \label{thm:nonlinear:subreg:weak}
    Suppose \cref{ass:nonlinear:energy-noise} that there exists a collection $\tilde X \subset \hat X$ of points satisfying the \emph{semi-strong} source condition of \cref{ass:nonlinear:semi-strong-source-condition} such that $\Union_{\hat x \in \tilde X} U^{\hat x} \supset \hat X$.
    Also suppose that $U_\rho$, as defined in \eqref{eq:nonlinear:strong-subreg:u-cond}, is weakly or weakly-$*$ compact for some $\rho>0$, and each $U^{\hat x}$ for all $\hat x \in \tilde X$ correspondingly weakly or weakly-$*$ open.
    If
    \[
        \lim_{\delta \downto 0} \frac{1}{\min\{\alpha_\delta, \gamma_\delta\}}(\alpha_\delta^2, \delta^{\min\{2,q,p\}}, e_\delta)=0
    \]
    then
    \[
        \lim_{\delta \downto 0} \dist(x_\delta, \hat X)=0.
    \]
\end{theorem}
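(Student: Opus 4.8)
The plan is to mimic the proof of \cref{thm:linear:subreg:weak} line by line, substituting the nonlinear ingredients. First I would argue by contradiction: assume there is a sequence $\delta_k \downto 0$ with $\inf_k \dist(x_{\delta_k}, \hat X) > 0$. By \cref{lemma:nonlinear:a-convergence} together with the parameter assumption $\frac{1}{\min\{\alpha_\delta,\gamma_\delta\}}(\alpha_\delta^2, \delta^{\min\{2,q,p\}}, e_\delta) \to 0$, we get $R(x_{\delta_k}) \le R(\hat x) + \frac{e_{\delta_k}+C'\delta_k^q+\delta_k^p}{\alpha_{\delta_k}}$ bounded and $E(A(x_{\delta_k})-A(\hat x)) \to 0$, which places $x_{\delta_k} \in U_\rho$ for large $k$. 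By weak(-$*$) compactness of $U_\rho$, pass to a subsequence with $x_{\delta_k} \weakto \tilde x$.

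Next I would identify the limit $\tilde x$ as an element of $\hat X$. This is the step that differs most from the linear case and is the \textbf{main obstacle}: for nonlinear $A \in C^1(X;Y)$, weak(-$*$) convergence $x_{\delta_k} \weakto \tilde x$ does not automatically give $A(x_{\delta_k}) \weakto A(\tilde x)$, nor $A(\tilde x) = \hat b$. In the linear case one uses $A$ weakly continuous and $E(A x_{\delta_k} - b_{\delta_k}) \to 0$ to conclude $A x_{\delta_k} \to \hat b$ and hence $A\tilde x = \hat b$; here one must additionally assume (or the ambient hypotheses of \cref{thm:nonlinear:subreg:weak} must supply) weak-to-weak continuity of $A$ and, via convexity and coercivity-type lower semicontinuity of $E$ at $0$, that $E(A\tilde x - \hat b) \le \liminf_k E(A x_{\delta_k} - b_{\delta_k}) = 0$, so $A\tilde x = \hat b$. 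Combined with weak(-$*$) lower semicontinuity of the convex l.s.c.\ functional $R$ and the bound $R(x_{\delta_k}) \le R(\hat x) + \frac{e_{\delta_k}+\delta_k^2+\ldots}{\alpha_{\delta_k}} \to R(\hat x)$, we obtain $R(\tilde x) \le R(\hat x)$, hence $\tilde x$ solves \eqref{eq:nonlinear:minimum-norm}; but membership in $\hat X$ (the set satisfying the basic source condition \eqref{eq:nonlinear:basic-source-condition}) is a priori larger than the solution set of \eqref{eq:nonlinear:minimum-norm}, so one should either restrict attention to the case $\hat X$ equals that solution set, or invoke that the limit satisfies the Fermat/source condition directly as in the discussion preceding \cref{ass:nonlinear:energy-noise}.

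Once $\tilde x \in \hat X$ is established, the remaining steps are routine. Since $\Union_{\hat x \in \tilde X} U^{\hat x} \supset \hat X$, there is $\hat x_0 \in \tilde X$ with $\tilde x \in U^{\hat x_0}$; by the weak(-$*$) openness of $U^{\hat x_0}$, we have $x_{\delta_k} \in U^{\hat x_0}$ for all large $k$. Applying \cref{lemma:nonlinear:subreg} at $\hat x_0$ — whose hypotheses on $U^{\hat x_0} \supset U_\rho$ and on the decay of $(e_\delta+\delta^q+\delta^p)/\alpha_\delta$ are exactly what we have arranged — yields
\[
    \dist^2(x_{\delta_k}, \hat X)
    \le
    \frac{e_{\delta_k}}{\gamma\gamma_{\delta_k}}
    +\frac{\delta_k^2}{4\gamma'\gamma\gamma_{\delta_k}}
    +\frac{\eta^2\alpha_{\delta_k}^2}{4(\gamma-\gamma')\gamma\gamma_{\delta_k}}\norm{\hat w}_Y^2.
\]
The parameter assumption $\frac{1}{\min\{\alpha_\delta,\gamma_\delta\}}(\alpha_\delta^2,\delta^{\min\{2,q,p\}},e_\delta) \to 0$ drives the right-hand side to zero, so $\dist(x_{\delta_k}, \hat X) \to 0$, contradicting $\inf_k \dist(x_{\delta_k},\hat X)>0$. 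This contradiction establishes $\lim_{\delta\downto 0}\dist(x_\delta,\hat X)=0$, completing the proof.
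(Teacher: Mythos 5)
Your proposal follows essentially the same route as the paper: the paper's entire proof of this theorem is the one-line remark that it "is proved exactly as \cref{thm:linear:subreg:weak} using \cref{lemma:nonlinear:subreg} in place of \cref{lemma:linear:subreg}", which is precisely the contradiction argument you carry out. The obstacle you flag---that for nonlinear $A$ one needs some weak(-$*$) sequential continuity (and a lower-semicontinuity argument for $E \circ A$) to identify the weak limit $\tilde x$ as an element of $\hat X$---is a genuine subtlety, but the paper does not address it either, so on this point you are, if anything, more careful than the source.
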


\subsection{Examples}

The following \cref{lemma:nonlinear:to-prove:strong,lemma:nonlinear:to-prove:semistrong} are the counterparts of \cref{lemma:linear:to-prove:strong,lemma:linear:to-prove:semistrong} for nonlinear $A$.
We concentrate for simplicity on $E=\frac{1}{2}\norm{\freevar}_Y^2$.
We need the \term{approximate linearity} condition
\begin{equation}
    \label{eq:nonlinear:approx}
    \frac{1}{2}\norm{A(x)-A(\hat x)}^2
    +\iprod{A(\hat x)-b_\delta}{A(x)-A(\hat x)-A'(\hat x)(x-\hat x)}
    \ge
    \eta\norm{A'(\hat x)(x-\hat x)}^2
    \quad (x \in U_A)
\end{equation}
for some $\eta>0$ and a neighbourhood $U_A$ of $\hat x \in \hat X$. By Pythagoras' three-point identity, this holds with $\eta=\frac{1}{2}$ and $U_A=X$ if $A$ is linear.
Given that \cref{ex:nonlinear:l2} establishes $\norm{A(\hat x)-b_\delta}=\norm{\hat b-b_\delta} \le \delta$ for $E=\frac{1}{2}\norm{\freevar}_Y^2$, \eqref{eq:nonlinear:approx} follows from
\begin{equation*}
    \frac{1}{2}\norm{A(x)-A(\hat x)}^2
    \ge
    \delta\norm{A(x)-A(\hat x)-A'(\hat x)(x-\hat x)}
    +\eta\norm{A'(\hat x)(x-\hat x)}^2
    \quad (x \in U_A).
\end{equation*}

\begin{lemma}
    \label{lemma:nonlinear:to-prove:strong}
    Let $E=\frac{1}{2}\norm{\freevar}_Y^2$ on a Hilbert space $Y$ and suppose $A$ and $R$ are as in \cref{ass:nonlinear:energy-noise}.
    Suppose \eqref{eq:nonlinear:approx} holds at $\hat x$ for a given $\delta>0$.
    Then $J_\delta+\alpha_\delta R$ is strongly locally subdifferentiable at $\hat x$ for $J_\delta'(\hat x)-\alpha_\delta \hat d$ with respect to the norm $\norm{\freevar}_\delta$ if, for corresponding neighbourhood $U \ni \hat x$, $U \subset U_A$ and factor $\gamma>0$,
    \begin{equation}
        \label{eq:nonlinear:to-prove:strong}
        \alpha_\delta[R(x)-R(\hat x)-\dualprod{\hat d}{x-\hat x}]
        +\left(\eta-\gamma\right)\norm{A'(\hat x)(x-\hat x)}_2^2
        \ge
        \gamma\gamma_\delta \norm{x-\hat x}^2
        \quad (x \in U).
    \end{equation}
\end{lemma}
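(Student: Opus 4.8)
The plan is to verify directly the defining inequality \eqref{eq:subreg:strong-subdiff} of strong local subdifferentiability, specialised to $F = J_\delta + \alpha_\delta R$, to the linear operator $A'(\hat x)$, to the base point $\hat x$, and --- exactly as in the passage from \cref{lemma:linear:to-prove:semistrong} to \cref{lemma:linear:to-prove:strong} --- to the singleton set $\{\hat x\}$, so that $\dist^2(x,\{\hat x\}) = \norm{x-\hat x}_X^2$ and this term together with the $\norm{A'(\hat x)(x-\hat x)}_Y^2$ term reconstitutes $\gamma\norm{x-\hat x}_\delta^2$ for the norm $\norm{\freevar}_\delta$ of \cref{ass:nonlinear:strong-source-condition}. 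Concretely, for the given $U \ni \hat x$ with $U \subset U_A$ and $\gamma>0$, one must show
\[
    J_\delta(x) + \alpha_\delta R(x)
    \ge
    J_\delta(\hat x) + \alpha_\delta R(\hat x)
    + \dualprod{J_\delta'(\hat x) - \alpha_\delta \hat d}{x-\hat x}
    + \gamma\norm{x-\hat x}_\delta^2
    \quad (x \in U),
\]
after which one expands $\norm{x-\hat x}_\delta^2 = \norm{A'(\hat x)(x-\hat x)}_Y^2 + \gamma_\delta\norm{x-\hat x}_X^2$.

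First I would compute, by the chain rule and $E = \tfrac12\norm{\freevar}_Y^2$ (so $E'(z)=z$), that $J_\delta'(\hat x) = A'(\hat x)^*(A(\hat x)-b_\delta)$, whence $\dualprod{J_\delta'(\hat x)}{x-\hat x} = \iprod{A(\hat x)-b_\delta}{A'(\hat x)(x-\hat x)}$. Next I would expand the fidelity difference by the two-term Pythagoras identity
\[
    \tfrac12\norm{A(x)-b_\delta}_Y^2 - \tfrac12\norm{A(\hat x)-b_\delta}_Y^2
    = \iprod{A(\hat x)-b_\delta}{A(x)-A(\hat x)} + \tfrac12\norm{A(x)-A(\hat x)}_Y^2,
\]
valid since $(A(x)-b_\delta)-(A(\hat x)-b_\delta) = A(x)-A(\hat x)$. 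Subtracting the derivative term collapses the two inner products into $\iprod{A(\hat x)-b_\delta}{\,A(x)-A(\hat x)-A'(\hat x)(x-\hat x)\,}$, i.e.\ the forward map paired with its own linearisation remainder at $\hat x$. At this point the approximate-linearity hypothesis \eqref{eq:nonlinear:approx}, legitimate because $x \in U \subset U_A$, gives
\[
    \tfrac12\norm{A(x)-A(\hat x)}_Y^2 + \iprod{A(\hat x)-b_\delta}{\,A(x)-A(\hat x)-A'(\hat x)(x-\hat x)\,}
    \ge \eta\norm{A'(\hat x)(x-\hat x)}_Y^2 .
\]
Substituting this and cancelling $\gamma\norm{A'(\hat x)(x-\hat x)}_Y^2$ from both sides, the inequality to be shown rearranges --- exactly as \eqref{eq:linear:to-prove} rearranges out of \eqref{eq:subreg:strong-subdiff} in the proof of \cref{lemma:linear:to-prove:semistrong}, now with the factor $\tfrac12$ replaced by $\eta$ --- into the assumed \eqref{eq:nonlinear:to-prove:strong}, which closes the argument.

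I do not anticipate any genuine analytic obstacle here: the lemma is the nonlinear counterpart of \cref{lemma:linear:to-prove:strong} and the whole content is algebraic rearrangement plus a single invocation of \eqref{eq:nonlinear:approx}. The one place that needs care is the bookkeeping of the cross term $\iprod{A(\hat x)-b_\delta}{\freevar}$: one must expand the fidelity difference and the derivative term so that the remainder $A(x)-A(\hat x)-A'(\hat x)(x-\hat x)$ materialises with precisely the sign appearing in \eqref{eq:nonlinear:approx}. When $A$ is linear this remainder vanishes, \eqref{eq:nonlinear:approx} holds with $\eta=\tfrac12$ and $U_A=X$, and the statement reduces to \cref{lemma:linear:to-prove:strong}.
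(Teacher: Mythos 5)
Your proposal is correct and follows essentially the same route as the paper: the paper also proves the semi-strong variant (\cref{lemma:nonlinear:to-prove:semistrong}) by expanding \eqref{eq:subreg:strong-subdiff}, applying Pythagoras' identity to the fidelity term so that the linearisation remainder $A(x)-A(\hat x)-A'(\hat x)(x-\hat x)$ appears paired with $A(\hat x)-b_\delta$, invoking \eqref{eq:nonlinear:approx}, and then deduces the strong version by taking $\hat X=\{\hat x\}$ and expanding $\norm{\freevar}_\delta$, exactly as you do. No gaps.
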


The proof follows from the proof of the next lemma after expanding $\norm{\freevar}_\delta$ and taking $\hat X=\{\hat x\}$.

\begin{lemma}
    \label{lemma:nonlinear:to-prove:semistrong}
    Let $E=\frac{1}{2}\norm{\freevar}_Y^2$ on a Hilbert space $Y$ and suppose $A$ and $R$ are as in \cref{ass:nonlinear:energy-noise}.
    Suppose \eqref{eq:nonlinear:approx} holds at $\hat x$ for a given $\delta>0$.
    Then $J_\delta+\alpha_\delta R$ is $(A, \gamma_\delta)$-strongly locally subdifferentiable at $\hat x$ for $J_\delta'(\hat x)-\alpha_\delta \hat d$ with respect to $\hat X$ if, for corresponding neighbourhood $U \ni \hat x$, $U \subset U_A$ and factor $\gamma>0$,
    \begin{equation}
        \label{eq:nonlinear:to-prove}
        \alpha_\delta[R(x)-R(\hat x)-\dualprod{\hat d}{x-\hat x}]
        +\left(\eta-\gamma\right)\norm{A'(\hat x)(x-\hat x)}_2^2
        \ge
        \gamma\gamma_\delta \dist^2(x, \hat X)
        \quad (x \in U).
    \end{equation}
\end{lemma}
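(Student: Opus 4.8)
The plan is to imitate the proof of \cref{lemma:linear:to-prove:semistrong} almost verbatim; the only structural change is that the three-point identity \eqref{eq:intro:three-point}, which in the linear case made the quadratic part of $J_\delta$ collapse exactly, must be replaced by the approximate linearity estimate \eqref{eq:nonlinear:approx}. That condition has been set up precisely so this substitution goes through, so the argument is correspondingly short.

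First I would expand the definition \eqref{eq:subreg:strong-subdiff} of $(A'(\hat x),\gamma_\delta)$-strong local subdifferentiability of $F\defeq J_\delta+\alpha_\delta R$ at $\hat x$ for $J_\delta'(\hat x)+\alpha_\delta\hat d$ (the sign convention already used in the proof of \cref{lemma:linear:to-prove:semistrong}) with respect to $\hat X$. Since $E=\tfrac12\norm{\freevar}_Y^2$ and $A(\hat x)=\hat b$, we have $J_\delta(x)=\tfrac12\norm{A(x)-b_\delta}_Y^2$ and $J_\delta'(\hat x)=A'(\hat x)^*(A(\hat x)-b_\delta)$, whence $\dualprod{J_\delta'(\hat x)}{x-\hat x}=\iprod{A(\hat x)-b_\delta}{A'(\hat x)(x-\hat x)}$. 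After bringing the linear term to the left-hand side, what must be shown for $x\in U$ is
\begin{multline*}
    \tfrac12\norm{A(x)-b_\delta}_Y^2-\tfrac12\norm{A(\hat x)-b_\delta}_Y^2-\iprod{A(\hat x)-b_\delta}{A'(\hat x)(x-\hat x)}
    \\
    +\alpha_\delta\bigl[R(x)-R(\hat x)-\dualprod{\hat d}{x-\hat x}\bigr]
    \ge
    \gamma\norm{A'(\hat x)(x-\hat x)}_Y^2+\gamma\gamma_\delta\dist^2(x,\hat X).
\end{multline*}

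The key step is to rewrite the first three terms. Writing $A(x)-b_\delta=\bigl(A(x)-A(\hat x)\bigr)+\bigl(A(\hat x)-b_\delta\bigr)$ and expanding the square, they equal
\[
    \tfrac12\norm{A(x)-A(\hat x)}_Y^2+\iprod{A(\hat x)-b_\delta}{A(x)-A(\hat x)-A'(\hat x)(x-\hat x)},
\]
which is exactly the left-hand side of \eqref{eq:nonlinear:approx}. Since $U\subset U_A$ by hypothesis, \eqref{eq:nonlinear:approx} bounds this from below by $\eta\norm{A'(\hat x)(x-\hat x)}_Y^2$. Substituting this bound into the displayed inequality, it remains to verify
\[
    \alpha_\delta\bigl[R(x)-R(\hat x)-\dualprod{\hat d}{x-\hat x}\bigr]+\left(\eta-\gamma\right)\norm{A'(\hat x)(x-\hat x)}_Y^2\ge\gamma\gamma_\delta\dist^2(x,\hat X)\quad(x\in U),
\]
which is precisely the assumed estimate \eqref{eq:nonlinear:to-prove}; this finishes the proof.

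There is essentially no obstacle beyond the elementary identity in the key step — the analogue of the “properties of the Hilbert space norm (Pythagoras' identity)” invoked in the proof of \cref{lemma:linear:to-prove:semistrong} — since all the genuine content is front-loaded into the standing hypotheses \eqref{eq:nonlinear:approx} and \eqref{eq:nonlinear:to-prove}. As the remark preceding \cref{lemma:nonlinear:to-prove:strong} indicates, that lemma then follows from this one by expanding $\norm{\freevar}_\delta=\sqrt{\norm{A'(\hat x)\freevar}_Y^2+\gamma_\delta\norm{\freevar}_X^2}$ and taking $\hat X=\{\hat x\}$; and when $A$ is linear one recovers \cref{lemma:linear:to-prove:semistrong}, since then \eqref{eq:nonlinear:approx} holds with $\eta=\tfrac12$ and $U_A=X$ by the three-point identity.
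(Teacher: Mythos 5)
Your proposal is correct and follows essentially the same route as the paper: reduce to the expanded form of \eqref{eq:subreg:strong-subdiff}, rewrite the quadratic terms of $J_\delta$ via the elementary expansion (Pythagoras' identity) into the left-hand side of \eqref{eq:nonlinear:approx}, bound that below by $\eta\norm{A'(\hat x)(x-\hat x)}_Y^2$ using $U \subset U_A$, and conclude with \eqref{eq:nonlinear:to-prove}. Your remark on the sign of $\alpha_\delta\hat d$ simply mirrors the convention already used in the paper's own proofs, so no gap there.
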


\begin{proof}
    Minding the definition of $J_\delta$, we need to show that
    \begin{multline}
        \label{eq:nonlinear:to-prove:0}
        E(A(x)-b_\delta)-E(A(\hat x)-b_\delta)-\iprod{E'(A(\hat x)-b_\delta)}{A'(\hat x)(x-\hat x)}
        \\
        +\alpha_\delta[R(x)-R(\hat x)-\dualprod{A'(\hat x)^*\hat w}{x-\hat x}]
        \ge
        \gamma\norm{A'(\hat x)(x-\hat x)}_Y^2 + \gamma\gamma_\delta\dist(x, \hat X)^2
        \quad (x \in U).
    \end{multline}
    For $E=\frac{1}{2}\norm{\freevar}_Y^2$, using Pythagoras' identity \eqref{eq:intro:three-point} and the approximate linearity condition \eqref{eq:nonlinear:approx}, we have
    \[
        \begin{aligned}[t]
        E(A(x)-b_\delta)&-E(A(\hat x)-b_\delta)-\iprod{E'(A(\hat x)-b_\delta)}{A'(\hat x)(x-\hat x)}
        \\
        &
        =
        \frac{1}{2}\norm{A(x)-b_\delta}^2 - \frac{1}{2}\norm{A(\hat x)-b_\delta}^2
        - \iprod{A(\hat x)-b_\delta}{A'(\hat x)(x-\hat x)}
        \\
        &
        =
        \frac{1}{2}\norm{A(x)-A(\hat x)}_Y^2
        + \iprod{A(\hat x)-b_\delta}{A(x)-A(\hat x)-A'(\hat x)(x-\hat x)}
        \\
        &
        \ge \eta\norm{A'(\hat x)(x-\hat x)}^2.
        \end{aligned}
    \]
    Applying this in \eqref{eq:nonlinear:to-prove} proves \eqref{eq:nonlinear:to-prove:0}.
\end{proof}

\begin{corollary}[Nonlinear total variation regularised image reconstruction]
    \label{cor:tv:nonlinear}
    Let $E=\frac{1}{2}\norm{\freevar}_Y^2$, $R(x)=\norm{Dx}_{\Meas(\Omega; \R^m)}$, and $A \in C^1(X; Y)$ in $X=L^2(\Omega)$ and a Hilbert space $Y$. Suppose for some $\bar\delta,\rho>0$ that the approximate linearity condition \eqref{eq:nonlinear:approx} holds at $\hat x$ for all $\delta \in (0, \bar\delta)$ with $U_A \supset U_\rho$.
    Also suppose \eqref{eq:nonlinear:accuracy} holds, $\hat x, \hat d \in L^2(\Omega)$ satisfy \eqref{eq:tv-basic-source-cond}, and there exists a corresponding collection $\mathcal{O}$ of strictly flat areas satisfying
    \begin{equation}
        \label{eq:nonlinear:tv:cond}
        K_{\mathcal{O}}^*K_{\mathcal{O}} + A'(\hat x)^*A'(\hat x) \ge \epsilon\Id
        \text{ for some } \epsilon>0.
    \end{equation}
    If the accuracy and regularisation parameters satisfy
    \begin{equation}
        \label{eq:nonlinear:tv:conv}
        \lim_{\delta \downto 0} \left(\alpha_\delta, \frac{\delta^{\min\{2,p,q\}}}{\alpha_\delta}, \frac{e_\delta}{\alpha_\delta}\right)=0,
    \end{equation}
    then $\norm{x_\delta-\hat x}_2 \downto 0$.
    Moreover, \eqref{eq:nonlinear:strong-subreg:estimate} holds for small enough $\delta>0$.
\end{corollary}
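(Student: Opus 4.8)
The plan is to deduce the corollary from \cref{thm:nonlinear:strong-subreg,cor:nonlinear:strong-subreg} by checking that the nonlinear strong source condition of \cref{ass:nonlinear:strong-source-condition} holds at $\hat x$ for $\hat w$ with $\gamma_\delta = \alpha_\delta$. First I would fix the given $\rho>0$ and take the neighbourhood of strong local subdifferentiability to be $U \defeq U_\rho$ from \eqref{eq:nonlinear:strong-subreg:u-cond}, so that \eqref{eq:nonlinear:strong-subreg:u-cond} is trivially satisfied. By hypothesis $U_A \supset U_\rho = U$, and since every $x \in U_\rho$ obeys $\norm{Dx}_{\Meas} = R(x) \le R(\hat x)+\rho$, the set $U$ also satisfies \eqref{eq:tv:neigh} with the $\delta$-free constant $C \defeq \norm{D\hat x}_{\Meas}+\rho$. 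The basic source condition \eqref{eq:nonlinear:basic-source-condition} is exactly what \eqref{eq:tv-basic-source-cond} encodes (with $A'(\hat x)$ in place of $A$, and $\hat d = \pm A'(\hat x)^*\hat w$, handled with the sign convention of the proof of \cref{thm:tv:main}), so the only remaining task is the strong local subdifferentiability of $J_\delta+\alpha_\delta R$ at $\hat x$ for $J_\delta'(\hat x)-\alpha_\delta A'(\hat x)^*\hat w$ with respect to $\norm{\freevar}_\delta$, with growth factor $\gamma>0$ independent of $\delta$.

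For this I would invoke \cref{lemma:nonlinear:to-prove:strong}, which applies because \eqref{eq:nonlinear:approx} holds on $U_A \supset U$ with some $\eta>0$; it reduces the claim to the pointwise estimate $\alpha_\delta[R(x)-R(\hat x)-\dualprod{\hat d}{x-\hat x}] + (\eta-\gamma)\norm{A'(\hat x)(x-\hat x)}_2^2 \ge \gamma\gamma_\delta\norm{x-\hat x}_2^2$ for $x \in U$. Since $\hat d$ obeys \eqref{eq:tv:subdiff} and $\mathcal{O}$ is a (finite) collection of strictly flat areas, \cref{lemma:tv:technical} supplies a $\delta$-free constant $\epsilon_0 = \epsilon_0(\mathcal{O},\hat d)>0$ with $R(x)-R(\hat x)-\dualprod{\hat d}{x-\hat x} \ge \frac{\epsilon_0}{C}\norm{K_{\mathcal{O}}(x-\hat x)}_2^2$ on $U$. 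Combining this with the ellipticity \eqref{eq:nonlinear:tv:cond},
\[
    \alpha_\delta \tfrac{\epsilon_0}{C}\norm{K_{\mathcal{O}}(x-\hat x)}_2^2 + (\eta-\gamma)\norm{A'(\hat x)(x-\hat x)}_2^2
    \ge
    \min\Bigl\{\tfrac{\alpha_\delta\epsilon_0}{C},\,\eta-\gamma\Bigr\}\,\epsilon\,\norm{x-\hat x}_2^2 .
\]
For $\alpha_\delta$ below the $\delta$-free threshold $(\eta-\gamma)C/\epsilon_0$ the minimum equals $\alpha_\delta\epsilon_0/C$, so the right-hand side is at least $\tfrac{\epsilon_0\epsilon}{C}\,\alpha_\delta\norm{x-\hat x}_2^2$; taking $\gamma_\delta = \alpha_\delta$ and any fixed $\gamma \in (0, \min\{\eta,\epsilon_0\epsilon/C\})$ then yields the required inequality with $\gamma$ independent of $\delta$, which is what \cref{ass:nonlinear:strong-source-condition} demands.

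With the strong source condition verified, I would finish by matching parameters. Under \eqref{eq:nonlinear:tv:conv} we have $\alpha_\delta \downto 0$ and, for $\delta<1$, $(e_\delta+\delta^q+\delta^p)/\alpha_\delta \le (e_\delta+2\delta^{\min\{2,p,q\}})/\alpha_\delta \downto 0$, so \cref{thm:nonlinear:strong-subreg} applies and gives \eqref{eq:nonlinear:strong-subreg:estimate} for $\delta \in (0,\bar\delta)$; likewise $\frac{1}{\min\{\alpha_\delta,\gamma_\delta\}}(\alpha_\delta^2,\delta^{\min\{2,q,p\}},e_\delta) = \frac{1}{\alpha_\delta}(\alpha_\delta^2,\delta^{\min\{2,q,p\}},e_\delta) \to 0$, so \cref{cor:nonlinear:strong-subreg} gives $\norm{x_\delta-\hat x}_2 \to 0$. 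The verification that $x_\delta \in U_\rho$ for small $\delta$ is internal to \cref{thm:nonlinear:strong-subreg} (via \cref{lemma:nonlinear:a-convergence}), so nothing further is needed.

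The delicate point is producing the growth factor $\gamma$ independently of $\delta$ while $\gamma_\delta = \alpha_\delta \downto 0$. This is what forces the three structural hypotheses to be used in exactly the right way: (i) the Poincaré/strict-flatness constant $\epsilon_0$ from \cref{lemma:tv:technical} must be $\delta$-free, which is why $\mathcal{O}$ is taken finite (or uniformly Poincaré) and why the bound $\norm{Dx}_{\Meas}\le C$ must be drawn from the $\delta$-independent set $U_\rho$ rather than from $x_\delta$ itself; (ii) the approximate-linearity constant $\eta$ must be uniform over $\delta \in (0,\bar\delta)$ on $U_A \supset U_\rho$; and (iii) \eqref{eq:nonlinear:tv:cond} must hold with a single $\epsilon$. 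Once these are in place the $\min\{\cdot,\cdot\}$ bookkeeping is routine, and the only genuinely nonlinear ingredient — replacing the squared-norm discrepancy growth by $\norm{A'(\hat x)(x-\hat x)}$ growth — is already absorbed into \cref{lemma:nonlinear:to-prove:strong} through \eqref{eq:nonlinear:approx}.
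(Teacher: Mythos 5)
Your proposal is correct and takes essentially the same route as the paper: the paper's proof just says to rerun the proofs of \cref{thm:tv:main,lemma:tv:strong-subreg} with \cref{thm:nonlinear:strong-subreg} and \cref{lemma:nonlinear:to-prove:strong} replacing their linear counterparts, which is precisely what you spell out (take $U=U_\rho$, combine \cref{lemma:tv:technical} with the ellipticity \eqref{eq:nonlinear:tv:cond}, set $\gamma_\delta=\alpha_\delta$ with a $\delta$-independent $\gamma$, then invoke \cref{thm:nonlinear:strong-subreg,cor:nonlinear:strong-subreg}). The only step the paper makes explicit that you leave implicit is the verification, via \cref{ex:nonlinear:l2} and \eqref{eq:nonlinear:accuracy}, that \cref{ass:nonlinear:energy-noise} holds (with $p=q=2$), which your parameter bookkeeping tacitly uses.
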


\begin{proof}
    Due to \cref{ex:nonlinear:l2} and \eqref{eq:nonlinear:accuracy},  \cref{ass:nonlinear:energy-noise} holds.
    By the assumption $U_A \supset U_\rho$, the approximate linearity condition \eqref{eq:nonlinear:approx} is valid for $\delta>0$ small enough that $x_\delta \in U_\rho$ due to \cref{lemma:nonlinear:a-convergence} and \eqref{eq:nonlinear:tv:conv}.
    Therefore, in the proofs of \cref{thm:tv:main,lemma:tv:strong-subreg}, where $\gamma_\delta=\alpha_\delta$, we simply replace \cref{thm:linear:strong-subreg} by  \cref{thm:nonlinear:strong-subreg}, and  \cref{lemma:linear:to-prove:strong} by \cref{lemma:nonlinear:to-prove:strong}.
\end{proof}

The unconditional Lasso example of \cref{thm:lasso:main} does not extend as readily to nonlinear $A$. However, if we assume that $A'(\hat x)^*A'(\hat x) + \sum_{k \in Z(\hat x, \hat d)} \mathbb{1}_k \mathbb{1}_k^\top \ge \epsilon \Id$ (compare \eqref{eq:lasso:m} and  \eqref{eq:nonlinear:tv:cond}), then it is possible to produce convergence to specific $\hat x$ as in \cref{cor:tv:nonlinear}.

\section{Regularisation complexity of optimisation methods in Hilbert spaces}
\label{sec:complexity}

We now briefly discuss how we can use some popular nonsmooth optimisation methods to construct $x_\delta$ satisfying the accuracy estimate \eqref{eq:linear:accuracy} and the parameter convergence conditions \eqref{eq:linear:strong-subreg:convergence-cond}.
We start with forward-backward splitting, mainly applicable to the $\ell^1$-regularised regression of \cref{thm:lasso:main}, in which case it is also known as iterative soft-thresholding \cite{chambolledevore1998nonlinear,daubechies2004surrogate,wrightnovak2009sparse}.
We then look at the more widely applicable primal-dual proximal splitting (PDPS), also known as the Chambolle--Pock method.
Besides the original references below, the methods and their convergence properties are discussed, for example, in \cite{clasonvalkonen2020nonsmooth,tuomov-proxtest}.
Due to the necessities of effective first-order methods, we need to restrict our attention to Hilbert spaces.

\subsection{Forward-backward splitting}

The forward-backward splitting method of \cite{lionsmercier1979splitting} applies to problems of the form
\[
    \min_{x \in X} F(x) + G(x),
\]
on Hilbert spaces $X$ where $F: X \to \extR$ and $G: X \to \R$ are convex, proper, and lower semicontinuous, and $G$ has an $L$-Lipschitz gradient. Take a step length parameter $\tau>0$ such that $\tau L < 1$ and an initial iterate $x^0 \in X$.
If $F$ has a simple closed-form proximal operator $\prox_{\tau F}(x) \defeq \min_{\alt x} \frac{1}{2}\norm{x-\alt x}+\tau F(\alt x)$, the method iterates
\[
    x^{k+1} \defeq \prox_{\tau F}(x^k-\tau \grad G(x^k)).
\]
Taking $F=J_\delta$ and $G=\alpha_\delta R$, we now apply the method to \eqref{eq:linear:problem:regularised}.

\begin{theorem}
    \label{thm:fb:reg}
    Suppose that \cref{ass:linear:main} and the strong source condition of \cref{ass:linear:strong-source-condition} hold.
    For each $\delta>0$, take $N_\delta$ iterations of forward-backward splitting, starting from the same initial iterate $x^0 \in X$ with the same step length parameter $\tau>0$ satisfying $\tau L<1$.
    Denote the iterates by $\{x_\delta^k\}_{k \in \N}$.
    If
    \begin{equation}
        \label{eq:fb:convergence-cond}
        \lim_{\delta \downto 0} \frac{1}{\min\{\alpha_\delta, \gamma_\delta\}}(\alpha_\delta^2, \delta^2, N_\delta^{-1})=0,
    \end{equation}
    then
    \[
        \lim_{\delta \downto 0} \norm{x^{N_\delta}_\delta-\hat x}_X=0.
    \]
\end{theorem}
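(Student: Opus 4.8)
The plan is to show that $N_\delta$ iterations of forward--backward splitting applied to \eqref{eq:linear:problem:regularised} produce an iterate $x_\delta \defeq x_\delta^{N_\delta}$ satisfying the accuracy requirement \eqref{eq:linear:accuracy} of \cref{ass:linear:main} with an accuracy parameter $e_\delta$ of order $N_\delta^{-1}$, and then to quote \cref{cor:linear:strong-subreg}.

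First I would recall the standard convergence estimate for forward--backward splitting (see, e.g., \cite{lionsmercier1979splitting,clasonvalkonen2020nonsmooth,tuomov-proxtest}). Writing $\Phi_\delta \defeq J_\delta+\alpha_\delta R$, the proximal first-order condition together with the descent lemma for the $L$-smooth part of $\Phi_\delta$ gives, under the step-length condition $\tau L<1$, the one-step inequality
\[
    \Phi_\delta(x_\delta^{k+1}) + \frac{1}{2\tau}\norm{x_\delta^{k+1}-z}_X^2
    \le
    \Phi_\delta(z) + \frac{1}{2\tau}\norm{x_\delta^{k}-z}_X^2
    \qquad (z \in X),
\]
valid for every $k$, as well as the monotone descent $\Phi_\delta(x_\delta^{k+1}) \le \Phi_\delta(x_\delta^{k})$. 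Summing the one-step inequality over $k = 0,\dots,N-1$, discarding the nonnegative term $\tfrac{1}{2\tau}\norm{x_\delta^{N}-z}_X^2$ on the left, and using the descent property, I obtain the regret-type bound
\[
    \Phi_\delta(x_\delta^{N}) - \Phi_\delta(z) \le \frac{\norm{x^0-z}_X^2}{2\tau N}
    \qquad (z \in X,\ N \ge 1).
\]

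Next I would specialise this to $z = \hat x$ and $N = N_\delta$. Since $\hat x$ is admissible, $\Phi_\delta(\hat x) = \tfrac12\norm{\hat b-b_\delta}_Y^2 + \alpha_\delta R(\hat x)$ is finite, so the bound states precisely that $x_\delta \defeq x_\delta^{N_\delta}$ fulfils the accuracy condition \eqref{eq:linear:accuracy} with $e_\delta \defeq \norm{x^0-\hat x}_X^2/(2\tau N_\delta)$. The constant $\norm{x^0-\hat x}_X^2/(2\tau)$ does not depend on $\delta$, as $x^0$, $\tau$ and $\hat x$ are fixed; hence \cref{ass:linear:main} holds for this $x_\delta$, and the strong source condition of \cref{ass:linear:strong-source-condition} holds by hypothesis.

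Finally I would verify the parameter condition of \cref{cor:linear:strong-subreg}: since $e_\delta$ is a fixed multiple of $N_\delta^{-1}$, the assumed limit \eqref{eq:fb:convergence-cond} is exactly the convergence condition \eqref{eq:linear:strong-subreg:convergence-cond}. Thus \cref{cor:linear:strong-subreg} applies and yields $\norm{x_\delta^{N_\delta}-\hat x}_X \to 0$ as $\delta \downto 0$. The only point requiring care is that the one-step inequality is used against the (in general non-minimising) comparison point $\hat x$ rather than a minimiser of \eqref{eq:linear:problem:regularised}; this is what lets us read off \eqref{eq:linear:accuracy} directly, with a $\delta$-uniform constant, without having to establish existence of, or localise, a minimiser of \eqref{eq:linear:problem:regularised}. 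The remainder is bookkeeping.
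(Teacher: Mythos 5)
Your proposal is correct and follows essentially the same route as the paper: both invoke the standard $O(1/N)$ function-value estimate for forward--backward splitting against the (not necessarily minimising) comparison point $\hat x$, read off the accuracy condition \eqref{eq:linear:accuracy} with $e_\delta = \norm{x^0-\hat x}_X^2/(2\tau N_\delta)$, and then apply \cref{cor:linear:strong-subreg} under \eqref{eq:fb:convergence-cond}. The only difference is that you re-derive the sublinear estimate from the one-step inequality, while the paper quotes it from the literature.
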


\begin{proof}
    The iterates of the forward-backward splitting method are monotone ($[F+G](x^{k+1}) \le [F+G](x^k)$) and satisfy for any $\optx \in X$ the estimate (see, e.g., \cite{beck2017firstorder,clasonvalkonen2020nonsmooth})
    \[
        [F+G](x^N)-[F+G](\optx) \le \frac{1}{2\tau N}\norm{x^0-\optx}^2
        \quad (N \in \N).
    \]
    Therefore, with $F(x)=\frac{1}{2}\norm{Ax-b_\delta}_Y^2$ and $G(x) \defeq \alpha_\delta R(x)$, the accuracy estimate \eqref{eq:linear:accuracy} is satisfied for
    \[
        e_\delta = \frac{1}{2\tau N_\delta}\norm{x^0-\hat x}^2
    \]
    after taking $N_\delta$ iterations from the fixed initial iterate $x^0$.
    Thus the condition \eqref{eq:linear:strong-subreg:convergence-cond} of \cref{cor:linear:strong-subreg} is satisfied by choosing $\alpha_\delta>0$ and $N_\delta$ such that \eqref{eq:fb:convergence-cond} holds.
\end{proof}

In particular, if $\alpha_\delta=\gamma_\delta$ as in the Lasso of \cref{thm:lasso:main}, it suffices to take $\alpha_\delta \downto 0$ and $N_\delta \upto \infty$ such that $\delta^2/\alpha_\delta \downto 0$, and $\alpha_\delta N_\delta \upto \infty$ as $\delta \downto 0$.

\subsection{Primal-dual proximal splitting}

Primal-dual methods, for example the primal-dual proximal splitting (PDPS) of Chambolle and Pock \cite{chambolle2010first}, do not directly provide an accuracy estimate of the type \eqref{eq:linear:accuracy}. They provide estimates on a gap functional. To be more specific, consider the general problem
\begin{equation}
    \label{eq:primal-dual:problem}
    \min_{x \in X} F(x)+G(Kx),
\end{equation}
for convex, proper, lower semicontinuous $F: X \to \extR$ and $G: Y \to \extR$, and $K \in \linear(X; Y)$ on Hilbert spaces $X$ and $Y$.
Writing $G^*$ for Fenchel conjugate of $G$, taking step length parameters $\tau,\sigma>0$ with $\tau\sigma\norm{K}^2 < 1$ and an initial iterate $(x^0, y^0) \in X \times $, the PDPS then iterates
\begin{equation}
    \label{eq:pdps}
    \left\{\begin{array}{l}
        x^{k+1} \defeq \prox_{\tau F}(x^k-\tau K^*y^k), \\
        y^{k+1} \defeq \prox_{\sigma G^*}(y^k+\sigma K(2x^{k+1}-x^k)).
    \end{array}\right.
\end{equation}

Define the Lagrangian gap functional
\[
    \GenGap(x, y; \opt x, \opt y) \defeq \left(F(x)+\iprod{Kx}{\opt y}-G^*(\opt y)\right) - \left(F(\opt x) + \iprod{K\opt x}{y} - G^*(y)\right).
\]
The iterates of the PDPS satisfy for all comparison points $(\alt x, \alt y) \in X \times Y$, for some constant $C>0$ that \cite{tuomov-proxtest,clasonvalkonen2020nonsmooth,he2012convergence}
\begin{equation}
    \label{eq:primaldual:gap-first}
    \frac{1}{2}\norm{(x^N,y^N)-(\optx,\opty)}^2_{M}
    + \sum_{k=0}^{N-1} \gap(x^k, y^k; \alt x, \alt y) \le \frac{1}{2}\norm{(x^0, y^0)-(\alt x, \alt y)}^2_{M}
    \quad (N \in \N),
\end{equation}
where
\[
    \norm{u}_M \defeq \sqrt{\iprod{Mu}{u}}
    \quad\text{and}\quad
    M \defeq \begin{pmatrix} \inv\tau\Id & -K^* \\ -K & \inv\sigma\Id \end{pmatrix}.
\]
We want to develop \eqref{eq:primaldual:gap-first} into a function value estimate to use the regularisation theory of \cref{sec:linear}. For the next lemma, we need to know that by the Fenchel--Rockafellar theorem, minimisers $\opt x \in X$ of \eqref{eq:primal-dual:problem} are characterised by the existence of a \term{primal-dual solution pair} $(\opt x, \opt y) \in X \times Y$ such that
\[
    -K^*\opt y \in \subdiff F(\opt x)
    \quad\text{and}\quad
    K^*\opt x \in \subdiff G^*(\opt y).
\]

\begin{lemma}
    \label{lemma:primaldual:function-value-bound}
    Let $\{(x^k, y^k)\}_{k=1}^\infty$ be generated by the PDPS for the problem \eqref{eq:primal-dual:problem} and initial iterates $(x^0, y^0)$.
    Suppose the step length parameters satisfy $\tau\sigma\norm{K}^2 < 1$.
    Let $(\opt x, \opt y)$ be a primal-dual solution pair.
    Define
    \begin{equation}
        \label{eq:primaldual:c}
        C \defeq \norm{\opt x}_X + \frac{\sqrt{\tau}\norm{(x^0, y^0)-(\opt x, \opt y)}_{M}}{\sqrt{1-\tau\sigma\norm{K}^2}}.
    \end{equation}
    For all $N \in \N$, define the ergodic variables $\alt x^N \defeq \frac{1}{N} \sum_{k=0}^{N-1} x^N$ and  $\alt y^N \defeq \frac{1}{N} \sum_{k=0}^{N-1} y^N$.
    Suppose there exists a bounded set $B_Y \subset Y$ such that
    \begin{equation}
        \label{eq:primaldual:semi-conj}
        \sup_{\alt y \in B_Y}(\iprod{Kx}{\alt y} - G^*(\alt y))=G(Kx)
        \quad (\norm{x} \le C).
    \end{equation}
    Then, for any $\alt x \in X$,
    \[
        F(\alt x^N)+G(K\alt x^N) \le F(\alt x) + G(K\alt x) + \sup_{\alt y \in B_Y} \frac{\norm{(x^0, y^0)-(\alt x, \alt y)}^2_{M}}{2N}.
    \]
\end{lemma}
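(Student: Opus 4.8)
The plan is to run the standard ergodic-gap argument on the PDPS estimate \eqref{eq:primaldual:gap-first} and then convert it into a function-value bound using the ``semi-conjugacy'' condition \eqref{eq:primaldual:semi-conj}. First I would drop the nonnegative term $\tfrac12\norm{(x^N,y^N)-(\optx,\opty)}_M^2$ from \eqref{eq:primaldual:gap-first} and, observing that $(x,y)\mapsto\GenGap(x,y;\alt x,\alt y)$ is convex (it is $F(x)$ plus an affine term in $x$, plus $G^*(y)$ minus an affine term in $y$, minus constants), apply Jensen's inequality to the ergodic averages $\alt x^N=\tfrac1N\sum_{k=0}^{N-1}x^k$, $\alt y^N=\tfrac1N\sum_{k=0}^{N-1}y^k$ to obtain
\[
    \GenGap(\alt x^N,\alt y^N;\alt x,\alt y)\le\frac1N\sum_{k=0}^{N-1}\GenGap(x^k,y^k;\alt x,\alt y)\le\frac{\norm{(x^0,y^0)-(\alt x,\alt y)}_M^2}{2N}.
\]
Expanding the gap and using the Fenchel--Young inequality $G^*(\alt y^N)\ge\iprod{K\alt x}{\alt y^N}-G(K\alt x)$ to discard the ergodic dual variable, this gives, for every $\alt x\in X$ and every $\alt y\in Y$,
\[
    F(\alt x^N)+\iprod{K\alt x^N}{\alt y}-G^*(\alt y)\le F(\alt x)+G(K\alt x)+\frac{\norm{(x^0,y^0)-(\alt x,\alt y)}_M^2}{2N}.
\]
Taking the supremum over $\alt y\in B_Y$ on both sides then finishes the proof, \emph{provided} $\norm{\alt x^N}_X\le C$, since then $\sup_{\alt y\in B_Y}(\iprod{K\alt x^N}{\alt y}-G^*(\alt y))=G(K\alt x^N)$ by \eqref{eq:primaldual:semi-conj}, while on the right the supremum is merely absorbed into the error term.

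The remaining ingredient is therefore the uniform bound $\norm{\alt x^N}_X\le C$ with $C$ as in \eqref{eq:primaldual:c}. For this I would apply \eqref{eq:primaldual:gap-first} once more (with $N$ replaced by each $k\in\N$), this time with the comparison pair equal to the primal--dual solution pair $(\optx,\opty)$: since such a pair is a saddle point of the Lagrangian $(x,y)\mapsto F(x)+\iprod{Kx}{y}-G^*(y)$ — which follows from the characterising inclusions $-K^*\opty\in\subdiff F(\optx)$ and $K\optx\in\subdiff G^*(\opty)$ — we have $\GenGap(x^k,y^k;\optx,\opty)\ge0$ for all $k$, so \eqref{eq:primaldual:gap-first} yields $\norm{(x^k,y^k)-(\optx,\opty)}_M\le\norm{(x^0,y^0)-(\optx,\opty)}_M$ for every $k$. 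Completing the square, for $u=(x,y)$ one has $\iprod{Mu}{u}=\inv\tau\norm{x}_X^2+\norm{\sigma^{1/2}Kx-\sigma^{-1/2}y}_Y^2-\sigma\norm{Kx}_Y^2\ge\inv\tau(1-\tau\sigma\norm{K}^2)\norm{x}_X^2$, so that $\norm{x^k-\optx}_X\le\sqrt\tau\,(1-\tau\sigma\norm{K}^2)^{-1/2}\norm{(x^0,y^0)-(\optx,\opty)}_M$, hence $\norm{x^k}_X\le C$; convexity of the norm propagates this bound to $\alt x^N$.

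Assembling the two parts proves the claim. The main obstacle is the uniform iterate bound: it is what dictates the particular form of $C$ in \eqref{eq:primaldual:c} and forces one to invoke the saddle-point property of $(\optx,\opty)$ together with the coercivity of $M$ in the primal variable; the ergodic-gap manipulation itself is routine. One small point worth care is the order of operations in the last step — the supremum over $B_Y$ must be taken only after the Jensen and Fenchel--Young estimates, so that it acts solely on $\iprod{K\alt x^N}{\alt y}-G^*(\alt y)$ on the left-hand side while being harmlessly bounded on the right.
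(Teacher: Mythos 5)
Your proposal is correct and follows essentially the same route as the paper: iterate boundedness via gap nonnegativity at the solution pair and the coercivity of $M$ in the primal variable, Jensen's inequality for the ergodic gap, and then elimination of the ergodic dual variable (your Fenchel--Young step is exactly the paper's marginalisation $\inf_y\GenGap(x,y;\alt x,\alt y)$) before taking the supremum over $B_Y$ and invoking \eqref{eq:primaldual:semi-conj}. No gaps.
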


\begin{proof}
    Since $(\opt x, \opt y)$ is a primal-dual solution pair, we have $\gap(\freevar; \opt x, \opt y) \ge 0$ as a consequence of the Fenchel--Rockafellar theorem.
    Since $\tau\sigma\norm{K} < 1$, Young's inequality shows that $\iprod{Mu}{u} \ge\inv\tau(1-\tau\sigma\norm{K}^2)\norm{x}^2$.
    By \eqref{eq:primaldual:gap-first} we therefore have for all $k \ge \N$ that
    \[
        \inv\tau(1-\tau\sigma\norm{K}^2)\norm{x^k-\opt x}_X^2
        \le
        \norm{(x^0, y^0)-(\opt x, \opt y)}^2_{M}.
    \]
    In other words $\norm{x^k} \le C$, consequently $\norm{\alt x^N} \le C$ for all $N \in \N$.

    As in \cite{tuomov-predict}, we marginalise the gap with respect to the dual variable:
    \[
        \inf_{y \in Y} \GenGap(x, y; \alt x, \alt y)
        =
        \left(F(x)+\iprod{Kx}{\alt y}-G^*(\alt y)\right) - \left(F(\alt x) + G(K\alt x)\right)
        \quad (x, \alt x \in X,\, \alt y \in Y).
    \]
    Using \eqref{eq:primaldual:semi-conj}, it follows that
    \[
        \sup_{\alt y \in B_Y} \inf_{y \in Y} \GenGap(x, y; \alt x, \alt y)
        \ge \left(F(x)+G(Kx)\right) -  \left(F(\alt x) + G(K\alt x)\right)
        \quad (\norm{x} \le C).
    \]
    Jensen's inequality and \eqref{eq:primaldual:gap-first} give the ergodic gap estimate
    \[
        \gap(\alt x^N, \alt y^N; \alt x, \alt y) \le \frac{1}{2N}\norm{(x^0, y^0)-(\alt x, \alt y)}^2_{M}
        \quad (N \in \N).
    \]
    Using that $\norm{\alt x^N} \le C$ and combining these two inequalities, we obtain the claim.
\end{proof}

We now return to the problem \eqref{eq:linear:problem:regularised} with $R=R_0 \circ Q$ for convex, proper and lower semicontinuous $G: Z \to \extR$ and $Q \in \linear(X; Y)$.
We assume that all $X$, $Y$, and $Z$ are Hilbert spaces.
For each $\delta>0$, we define
\begin{equation}
    \label{eq:pdps-formulation}
    F_\delta(x) \defeq 0,
    \quad
    G_\delta(y, z) \defeq \frac{1}{2}\norm{y-b_\delta}^2 + \alpha_\delta R_0(z),
    \quad\text{and}\quad
    Kx \defeq (Ax, Qx).
\end{equation}
Then
\[
    J_\delta + \alpha_\delta R = F_\delta + G_\delta \circ K.
\]

\begin{theorem}
    \label{thm:primaldual:reg}
    Suppose that \eqref{eq:linear:noise} and the strong source condition of \cref{ass:linear:strong-source-condition} hold at $\hat x$ with $R=R_0 \circ Q$ for convex, proper and lower semicontinuous $R_0: Z \to \extR$ and $Q \in \linear(X; Y)$.
    For each $\delta>0$, take $N_\delta$ iterations of the PDPS for the problem $\min_x F_\delta(x)+G_\delta(Kx)$, starting for each $\delta>0$ from the same initial iterate $(x^0, y^0_*, z^0_*) \in X \times Y \times Z$ with the same step length parameters $\tau,\sigma>0$ satisfying $\tau\sigma(\norm{A}^2+\norm{Q}^2)<1$.
    Suppose $\range \subdiff R_0 \defeq \Union_{z \in Z} \subdiff R_0(z)$ is bounded and that $G_\delta \circ K$ is coercive for all $\delta>0$.
    Denote the primal iterates by $\{x_\delta^k\}_{k \in \N}$, and the corresponding ergodic iterates by $\alt x_\delta^N \defeq \frac{1}{N}\sum_{k=0}^{N-1} x_\delta^k$.
    If
    \begin{equation}
        \label{eq:primaldual:convergence-cond}
        \lim_{\delta \downto 0} \frac{1}{\min\{\alpha_\delta, \gamma_\delta\}}(\alpha_\delta^2, \delta^2, N_\delta^{-1})=0,
    \end{equation}
    then
    \[
        \lim_{\delta \downto 0} \norm{\alt x^{N_\delta}_\delta-\hat x}_X=0.
    \]
\end{theorem}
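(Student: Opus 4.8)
The plan is to apply \cref{cor:linear:strong-subreg} to $x_\delta \defeq \alt x_\delta^{N_\delta}$, the ergodic primal iterate, using \cref{lemma:primaldual:function-value-bound} to convert the PDPS gap bound into the accuracy estimate \eqref{eq:linear:accuracy} with an $e_\delta$ of order $1/N_\delta$ and a $\delta$-independent constant. Writing the dual variable of the splitting \eqref{eq:pdps-formulation} as a pair in $Y \times Z$, I would first record that $F_\delta + G_\delta \circ K = J_\delta + \alpha_\delta R$ and that $\norm{K}^2 \le \norm{A}^2 + \norm{Q}^2$, so the step-length assumption of \cref{lemma:primaldual:function-value-bound} is met. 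Since $\range\subdiff R_0$ is bounded, $R_0$ is Lipschitz, hence finite-valued, which gives the Fenchel--Rockafellar qualification for $F_\delta \equiv 0$ and $G_\delta$; as $G_\delta \circ K$ is coercive, the problem $\min_x F_\delta(x)+G_\delta(Kx)$ has a minimiser $\opt x_\delta$ belonging to a primal--dual pair $(\opt x_\delta, \opt y_\delta)$ with $\opt y_\delta = (A\opt x_\delta - b_\delta,\, \alpha_\delta \opt s_\delta)$ and $\opt s_\delta \in \subdiff R_0(Q\opt x_\delta)$.

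The crucial step will be to bound the constant $C$ of \eqref{eq:primaldual:c} uniformly for small $\delta$. Fixing one $\delta_1 \in (0,\bar\delta)$, coercivity of $G_{\delta_1}\circ K$ forces $x \mapsto \tfrac12\norm{Ax-\hat b}^2 + \alpha_{\delta_1} R(x)$ to be coercive as well, whence the set $U_\rho$ from \eqref{eq:linear:strong-subreg:u-cond} is bounded. Applied to the exact minimiser $\opt x_\delta$ (for which \eqref{eq:linear:accuracy} holds with $e_\delta = 0$), \cref{lemma:linear:a-convergence} gives $\norm{A(\opt x_\delta - \hat x)} \to 0$ and $R(\opt x_\delta) \le R(\hat x) + \delta^2/\alpha_\delta$, so $\opt x_\delta \in U_\rho$ for $\delta$ small and therefore $\norm{\opt x_\delta}$ is uniformly bounded. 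Together with $\norm{A\opt x_\delta - b_\delta} \le \norm{A(\opt x_\delta-\hat x)} + \delta$ and $\norm{\alpha_\delta \opt s_\delta} \le \alpha_\delta\sup\norm{\range\subdiff R_0}$ this bounds $\norm{\opt y_\delta}$, hence $C \le \bar C$, uniformly for all sufficiently small $\delta$ (after shrinking $\bar\delta$ so that $\sup_{\delta}\alpha_\delta < \infty$).

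With a uniform $\bar C$ at hand I would verify \eqref{eq:primaldual:semi-conj} with the fixed bounded set $B_Y \defeq \Bclosed(0, \norm{A}\bar C + \norm{\hat b} + \bar\delta) \times \Bclosed(0, (\sup_{\delta}\alpha_\delta)\,\sup\norm{\range\subdiff R_0})$: for $\norm{x}\le\bar C$ the supremum over the first factor of $\iprod{Ax}{\alt y}-(\tfrac12\norm{\freevar-b_\delta}^2)^*(\alt y)$ is attained at $Ax-b_\delta$, which lies in that factor, while the supremum over the second factor of $\iprod{Qx}{\alt z}-(\alpha_\delta R_0)^*(\alt z)$ is attained at a point of $\alpha_\delta\subdiff R_0(Qx)$, also inside it, so by Fenchel--Moreau the two suprema add up to $G_\delta(Kx)$. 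Then \cref{lemma:primaldual:function-value-bound} with the comparison point $\alt x = \hat x$ (where $R(\hat x)<\infty$ since $\subdiff R(\hat x)\ne\emptyset$) yields
\[
    [J_\delta+\alpha_\delta R](x_\delta) - [J_\delta+\alpha_\delta R](\hat x)
    \le e_\delta \defeq \frac{1}{2N_\delta}\sup_{\alt y\in B_Y}\norm{(x^0,y^0_*,z^0_*)-(\hat x,\alt y)}_M^2,
\]
the supremum being a finite $\delta$-independent constant. This is exactly \eqref{eq:linear:accuracy}, so \cref{ass:linear:main} holds; by \eqref{eq:primaldual:convergence-cond} we then get $e_\delta/\min\{\alpha_\delta,\gamma_\delta\} \to 0$ along with the other requirements of \eqref{eq:linear:strong-subreg:convergence-cond}, and \cref{cor:linear:strong-subreg} delivers $\norm{\alt x_\delta^{N_\delta}-\hat x}_X \to 0$. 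I expect the uniform bound on $C$ in the second step to be the main obstacle; the verification of \eqref{eq:primaldual:semi-conj} and the parameter bookkeeping should be routine.
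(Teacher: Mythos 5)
Your proposal is correct and follows the same overall skeleton as the paper: convert the PDPS ergodic gap bound of \cref{lemma:primaldual:function-value-bound} into the accuracy estimate \eqref{eq:linear:accuracy} with $e_\delta = O(1/N_\delta)$ and a $\delta$-independent constant, then invoke \cref{cor:linear:strong-subreg}. The one genuinely different sub-step is how you obtain the uniform bound on the constant $C=C_\delta$ of \eqref{eq:primaldual:c}: the paper applies \cref{thm:linear:strong-subreg} to the exact minimisers $\opt x_\delta$ (which satisfy \eqref{eq:linear:accuracy} with $e_\delta=0$) to get $\norm{\opt x_\delta}_X \le \norm{\hat x}_X+\epsilon$, i.e., it reuses the strong source condition, whereas you show that $U_\rho$ is bounded using the assumed coercivity of $G_{\delta_1}\circ K$ for a single fixed $\delta_1$ and place $\opt x_\delta \in U_\rho$ for small $\delta$ via \cref{lemma:linear:a-convergence}; both arguments are valid under the theorem's hypotheses, yours leaning on coercivity, the paper's on subregularity. (A cosmetic point: the intermediate transfer of coercivity from $b_{\delta_1}$ to $\hat b$ is unnecessary and not quite automatic if $R$ is unbounded below; on $U_\rho$ both $\norm{Ax-b_{\delta_1}}$ and $R(x)$ are bounded above directly, so coercivity of $G_{\delta_1}\circ K$ itself already bounds $U_\rho$.) Your write-up also makes explicit two details the paper glosses over, namely the Fenchel--Rockafellar qualification (via Lipschitz finiteness of $R_0$ from the bounded range of $\subdiff R_0$, which also guarantees nonemptiness of $\subdiff R_0(Qx)$ needed for the attainment in \eqref{eq:primaldual:semi-conj}) and an explicit product-ball choice of $B_Y$; the dual-variable bound $\opt y_\delta=(A\opt x_\delta-b_\delta,\alpha_\delta\opt s_\delta)$ with $\norm{\alpha_\delta\opt s_\delta}\le\alpha_\delta\sup\norm{\range\subdiff R_0}$ matches the paper's \eqref{eq:primaldual:g-delta-subdiff}--\eqref{eq:primaldual:subdiff-bound}, and the final parameter bookkeeping is identical.
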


\begin{proof}
    We  use \cref{cor:linear:strong-subreg}, for which we need to verify \eqref{eq:linear:strong-subreg:convergence-cond} for some $e_\delta$ satisfying \eqref{eq:linear:accuracy}. \cref{ass:linear:strong-source-condition} and  \eqref{eq:linear:noise} of \cref{ass:linear:main} we have assumed.
    We do this via \cref{lemma:primaldual:function-value-bound} applied to $F=F_\delta$, $G=G_\delta$, and $K$, but need $C=C_\delta$ defined in \eqref{eq:primaldual:c} to be bounded over $\delta>0$.
    We also need to construct $B_Y$ satisfying \eqref{eq:primaldual:semi-conj} for $G=G_\delta$ for all small enough $\delta>0$.

    We have
    \begin{equation}
        \label{eq:primaldual:g-delta-subdiff}
        \subdiff G_\delta(y, z) \subset \{y-b_\delta\} \times \alpha_\delta \subdiff R_0(z).
    \end{equation}
    Since \eqref{eq:linear:noise} implies that $b_\delta \to b$ as $\delta \downto 0$,  \eqref{eq:primaldual:g-delta-subdiff} and the assumption that $\range \subdiff R_0$ is bounded prove the exists of a bounded set $B_Y$ such that
    \begin{equation}
        \label{eq:primaldual:subdiff-bound}
        \norm{x} \le C \implies \subdiff G_\delta(Kx) \subset B_Y.
    \end{equation}
    By the Fenchel--Young theorem (see, e.g., \cite{ekeland1999convex,clasonvalkonen2020nonsmooth}) we have
    \[
        G_\delta(y, z) + G_\delta^*(y_*, y_*) = \iprod{y}{y_*} + \iprod{z}{y_*}
        \quad\text{when}\quad
        (y_*, z_*) \in \subdiff G_\delta(y, z).
    \]
    Thus \eqref{eq:primaldual:subdiff-bound} proves \eqref{eq:primaldual:semi-conj}.

    For all $\delta>0$, let $\opt x_\delta$ be a minimiser of $F_\delta+G_\delta \circ K$. Such a point exists because we assume $G_\delta \circ K$ to be proper, coercive, and lower semicontinuous.
    By the Fenchel--Rockafellar theorem (see, e.g., \cite{ekeland1999convex,clasonvalkonen2020nonsmooth}), there also exists a dual solution $(\opt y_\delta^*, \opt z_\delta^*)$, i.e., minimiser of $F_\delta^*(-K^*\freevar)+G_\delta^*$.
    now $x_\delta=\opt x_\delta$ satisfies \eqref{eq:linear:accuracy} with $e_\delta=0$.
    On the other hand, \eqref{eq:primaldual:convergence-cond} verifies \eqref{eq:linear:strong-subreg:convergence-cond} for $e_\delta=0$.
    \Cref{thm:linear:strong-subreg} consequently shows for given $\epsilon>0$ and $\delta \in (0, \bar\delta)$ for small enough $\bar\delta>0$ that $\norm{\opt x_\delta}_X \le \norm{\hat x}_X + \epsilon$.
    By the Fenchel--Rockafellar theorem, we have $(\opt y_\delta^*, \opt z_\delta^*) \in \subdiff G_\delta(K\opt x_\delta)$.
    Thus \eqref{eq:primaldual:subdiff-bound} bounds $\norm{\opt y_\delta^*}$ and $\norm{\opt z_\delta^*}$ uniformly over $\delta \in (0, \bar\delta)0$.
    Consequently $C=C_\delta$ defined in \eqref{eq:primaldual:c} is bounded over $\delta \in (0, \bar\delta)$.

    Now \cref{lemma:primaldual:function-value-bound} proves \eqref{eq:linear:accuracy} for all $\delta \in (0, \bar\delta)$ for $x_\delta=\alt x^{N_\delta}_\delta$ and
    \[
         e_\delta \defeq \sup_{\opty \in B_Y} \frac{\norm{(x^0, y^0)-(\hat x, \opty)}^2_{M}}{2N_\delta}.
    \]
    Thus  \eqref{eq:primaldual:convergence-cond} implies \eqref{eq:linear:strong-subreg:convergence-cond}.
    It remains to apply \cref{cor:linear:strong-subreg}.
\end{proof}

In particular, if $\alpha_\delta=\gamma_\delta$, it suffices to take $\alpha_\delta \downto 0$ and $N_\delta \upto \infty$ such that $\delta^2/\alpha_\delta \downto 0$, and $\alpha_\delta N_\delta \upto \infty$ as $\delta \downto 0$.
However, we cannot directly apply \cref{thm:primaldual:reg} to the total variation \cref{thm:tv:main} as it would require Banach spaces that the proximal steps in the PDPS cannot handle\footnote{It is, however, possible, to develop versions based on Bregman divergences; see, e.g., \cite{tuomov-firstorder}.}. Nevertheless, \cref{thm:primaldual:reg} can be applied to discretised problems, as we next numerically demonstrate.

\subsection{Numerical illustration}

We finish by numerically illustrating \cref{thm:primaldual:reg} for total variation deblurring.
We take the $768 \times 512$ pixel “lighthouse” test image from the free Kodak image suite \cite{franzenkodak}, converted to greyscale values in the range $[0, 1]$. This is the ground-truth $\hat x=\hat b$. Instead of the overall noise level $\delta=\delta(\breve\delta)$, we use the pixelwise noise level $\breve\delta$ as our main parameter.

To generate the data $b_{\breve \delta}$, we apply pixelwise Gaussian noise of varying standard deviation $\breve \delta$ to $\hat b$ and apply to the result our forward operator $A$, a convolution with a Gaussian kernel of standard deviation 2 in a window of $7 \times 7$ pixels.
To employ isotropic total variation regularisation, we take as $Q$ a forward-differences discretisation of the image gradient operator (cell width $h=1$), and $R_0=\norm{\freevar}_{2,1}$ as the sum of two-norms of the gradient vectors over each image pixel.
For each $\breve \delta$ we take $\alpha_{\breve \delta}=\breve\delta/2$ based on rough visual inspection. To ensure that $N_{\breve\delta}\alpha_{\breve\delta} \upto \infty$ as $\breve\delta \downto 0$ with $N_{\breve\delta} \upto \infty$ not too fast for numerical computation to become infeasible, and to always take at least 100 iterations, somewhat arbitrarily we choose $N_{\breve \delta}=100+\inv{\alpha_{\breve\delta}}(t \mapsto \log(1+t))^{1000}(\inv{\breve\delta})$ (1000-fold composition of the logarithmic map). We plot this in \cref{fig:tvdenoising:n}.

We apply the PDPS \eqref{eq:pdps} to the functions \eqref{eq:pdps-formulation} for $\breve\delta \in \{f\cdot 10^{-p} \mid f \in \{1, 0.5\},\, p \in \{0,\ldots,8\}\}$. We use zero initialisation and take as the step length parameters $\tau=5/L$ and $\sigma=0.99/(5L)$ for $L$ a numerically computed estimate on the norm of $K$.
We report in \cref{fig:tvdenoising:dist} the normalised distances to the ground truth for $N=N_{\breve\delta}$ and fixed $N=100$ and $N=1000$ iterations.
The figure illustrates within numerical limits the convergence of the iterate $x_{\breve\delta}^N$ for $N=N_{\breve\delta}$ to the ground-truth as $\breve\delta\downto 0$, whereas with a fixed iteration count no convergence is observed.
For further details, our Julia implementation of the experiments and algorithm is available on Zenodo \cite{tuomov-regtheory-code}.

\begin{figure}
    \pgfplotsset{
        xlabel near ticks,
        ylabel near ticks,
        tick label style = {font=\scriptsize},
        every axis label = {font=\footnotesize},
        legend style = {font=\footnotesize},
        label style = {font=\footnotesize},
    }
    \begin{subfigure}{0.5\textwidth}%
\begin{tikzpicture}
    \begin{axis}[%
        width=\linewidth,
        xmode=log,
        ymax=0.22,
        scaled x ticks=false,
        xminorticks=true,
        minor x tick num=1,
        yminorticks=true,
        minor y tick num=3,
        axis x line*=bottom,
        axis y line*=left,
        legend style={legend pos=north east,inner sep=0pt,outer sep=0pt,legend cell align=left,align=left,draw=none,fill=none,font=\scriptsize},
        xlabel={$\breve \delta$},
        x dir=reverse, 
        ylabel={$\frac{\norm{x_{\breve \delta}^N-\hat x}_2}{\norm{\hat x}_2}$},
        y tick label style={/pgf/number format/fixed},
        outer sep=0pt,
        ]

        \addplot [color=Set2-A, line width=1pt] table[x=delta,y=q]{tv-results.txt};
        \addlegendentry{$N=N_{\breve \delta}$}


        \addplot [color=Set2-C, dashed, line width=1pt] table[x=delta,y=q100]{tv-results.txt};
        \addlegendentry{$N=100$}

        \addplot [color=Set2-D, dashed, line width=1pt] table[x=delta,y=q1000]{tv-results.txt};
        \addlegendentry{$N=1000$}

        \addplot [color=Set2-E, dotted, line width=1pt] table[x=delta,y=q_corrupt]{tv-results.txt};
        \addlegendentry{$b_{\breve \delta}$}

    \end{axis}

\end{tikzpicture}%
        \caption{Normalised distance to ground-truth}%
        \label{fig:tvdenoising:dist}%
    \end{subfigure}
    \begin{subfigure}{0.5\textwidth}%
\begin{tikzpicture}
    \begin{axis}[%
        width=\linewidth,
        xmode=log,
        scaled x ticks=false,
        xminorticks=true,
        minor x tick num=1,
        ymode=log,
        yminorticks=true,
        minor y tick num=3,
        axis x line*=bottom,
        axis y line*=left,
        legend style={legend pos=north east,inner sep=0pt,outer sep=0pt,legend cell align=left,align=left,draw=none,fill=none,font=\scriptsize},
        xlabel={$\breve \delta$},
        x dir=reverse, 
        ylabel={$N_{\breve \delta}$},
        outer sep=0pt,
        ]

        \addplot [color=Set2-A, line width=1pt] table[x=delta,y=N]{tv-results.txt};
    \end{axis}

\end{tikzpicture}%
        \caption{Iteration count $N_{\breve \delta}$}%
        \label{fig:tvdenoising:n}%
    \end{subfigure}
    \caption{Illustration of regularisation complexity of PDPS (\cref{thm:primaldual:reg}) for total variation deblurring.
    In (\subref{fig:tvdenoising:dist}) we display the reconstruction quality in terms of the normalised distance to the ground truth after $N$ iterations for fixed $N$ and a choice $N_{\breve \delta}$ satisfying the conditions of the theorem. We also display the quality of the corrupted data $b_{\breve \delta}$. In (\subref{fig:tvdenoising:n}) we plot the chosen iteration count $N_{\breve \delta}$ against the pixelwise noise level $\breve\delta$.
    }
    \label{fig:tvdenoising}%
\end{figure}

 \providecommand{\eprint}[1]{\href{http://arxiv.org/abs/#1}{arXiv:#1}}
  \providecommand{\noopsort}[1]{}
  \providecommand{\eprint}[1]{\href{http://arxiv.org/abs/#1}{arXiv:#1}}


\end{document}